\documentclass[12pt]{amsart}
\usepackage{amsthm, amssymb}
\usepackage{amsfonts, color, amscd,mathabx}
\usepackage{epsfig,multicol}
\usepackage{xypic}
\setlength{\topmargin}{0cm} \setlength{\oddsidemargin}{0cm}
\setlength{\evensidemargin}{0cm} \setlength{\textheight}{ 22cm}
\setlength{\textwidth}{16cm}
\theoremstyle{plain} \numberwithin{equation}{section}
\newtheorem{thm}{Theorem}[section]
\newtheorem{cor}[thm]{Corollary}
\newtheorem{prop}[thm]{Proposition}
\newtheorem{lem}[thm]{Lemma}

\DeclareMathOperator{\IM}{Im}
\theoremstyle{definition}
\newtheorem{defn}{Definition}%[section]
\newtheorem{exam}[thm]{Example}
\theoremstyle{remark}
 \newtheorem{rem}{Remark}

\def\Z{\Bbb Z}

\def\R{\Bbb R}

\begin{document}
\title[Self-dual binary codes from small covers and simple polytopes]{\large \bf Self-dual binary codes from small covers and simple polytopes}
\author[Bo Chen, Zhi L\"u and Li Yu]{Bo Chen, Zhi L\"u and Li Yu}
\keywords{Self-dual code, polytope, small cover.}
 \subjclass[2010]{57S25, 94B05, 57M60, 57R91}
\thanks{Supported in part by grants from NSFC (No.\,11371093, No.\,11371188, No.\,11401233, 
  No.\,11431009 and No.\,11661131004) and the PAPD (priority academic program development) of Jiangsu higher education institutions.}

\address{School of Mathematics and Statistics, Huazhong University of Science and Technology, Wuhan, 430074, P. R.  China}
\email{bobchen@hust.edu.cn}
\address{School of Mathematical Sciences, Fudan University, Shanghai,
200433, P.R. China.} \email{zlu@fudan.edu.cn}
\address{Department of Mathematics and IMS, Nanjing University, Nanjing, 210093, P.R.China}
\email{yuli@nju.edu.cn}

%\date{\today}

\begin{abstract}
We explore the connection between simple polytopes and self-dual binary codes
 via the theory of small covers. We first show that a small cover
$M^n$ over a simple $n$-polytope $P^n$ produces a self-dual code in the sense of Kreck--Puppe if and only if $P^n$ is $n$-colorable and $n$ is odd.
Then we show how to describe such a self-dual binary code in terms of the combinatorial information of $P^n$.
 Moreover, we can define a family of
 binary codes $\mathfrak{B}_k(P^n)$, $0\leq k\leq n$, from an arbitrary simple
$n$-polytope $P^n$. We will give some necessary and sufficient conditions for
 $\mathfrak{B}_k(P^n)$ to be a self-dual code. A spinoff of our study of such binary 
 codes gives some
  new ways to judge whether a simple $n$-polytope
  $P^n$ is $n$-colorable in terms of
  the associated binary codes $\mathfrak{B}_k(P^n)$.
  In addition, we prove that the minimum distance of
  the self-dual binary code obtained from a $3$-colorable simple $3$-polytope
  is always $4$.

\end{abstract}

\maketitle

%\vskip .3cm

\section{Introduction}\label{int}

 A (linear) \emph{binary code} $C$ of length $l$ is a linear subspace of the $l$-dimensional linear space
${\Bbb F}_2^l$ over ${\Bbb F}_2$ (the binary field).
 The \emph{Hamming weight} of an element
  $u=(u_1, \ldots, u_l)\in {\Bbb F}_2^l$, denoted by $wt(u)$,
   is the number of nonzero coordinates $u_i$ in $u$.
   Any element of $C$ is called a \emph{codeword}.
   The \emph{Hamming distance} $d(u,v)$ of any two codewords $u,v\in C$ is defined by:
   $$d(u,v)=wt(u-v).$$
  The minimum of the Hamming distances $d(u, v)$ for all $u, v\in C$ , $u\neq v$, is
called the \emph{minimum distance} of $C$ (which also equals
 the minimum Hamming weight of nonzero elements in $C$). A
 binary code $C\subset {\Bbb F}_2^l$ is called \emph{type} $[l,k,d]$ if $\dim_{{\Bbb F}_2} C =k$
 and the minimum distance of $C$ is $d$. We call two binary codes in ${\Bbb F}_2^l$
  \emph{equivalent} if they differ only by a permutation of coordinates.\vskip .1cm

 The standard bilinear form
    $\langle\ ,\, \rangle$ on ${\Bbb F}_2^l$ is defined by
$$\langle u, v\rangle :=\sum_{i=1}^l u_iv_i,\
 u=(u_1, \ldots, u_l), v=(v_1, \ldots, v_l)\in {\Bbb F}_2^l.$$
 Note that $\langle u , v\rangle = \frac{1}{2} \big(wt(u) + wt(v)-wt(u+v)\big)
  \ \text{mod} \ 2$ for any $u,v\in {\Bbb F}_2^l$, and
  $$\langle u, u\rangle=\sum_{i=1}^l u_i,\ u=(u_1, \ldots, u_l)\in {\Bbb F}_2^l.$$
 Then any linear binary code $C$ in ${\Bbb F}_2^l$ has a \emph{dual code} $C^{\perp}$ defined by
  $$C^\perp :=\{u\in {\Bbb F}_2^l\,|\, \langle u, c\rangle=0 \text{ for all $c\in C$}\}$$
  It is clear that $\dim_{{\Bbb F}_2} C + \dim_{{\Bbb F}_2}C^{\perp} =l$.
 We call $C$ {\em self-dual} if $C=C^\perp$.
 For a self-dual binary code $C$, we can easily show the following
 \begin{itemize}
   \item The length $l=2\dim_{{\Bbb F}_2}C$ must be even; \vskip .1cm
   \item For any $u\in C$,
  the Hamming weight $wt(u)$ is an even integer since $\langle u, u \rangle=0$;\vskip .1cm
  \item The minimum distance of $C$ is an even integer.
  \end{itemize}

 Self-dual binary codes play an important role in coding theory and have been studied
 extensively (see~\cite{rs} for a detailed survey).
 \vskip .1cm

Puppe in~\cite{p} found an interesting connection between
 closed manifolds and self-dual binary codes.
  It was shown in~\cite{p}
  that an involution $\tau$ on an odd dimensional closed manifold $M$ with ``maximal number
   of isolated fixed points'' (i.e., with only isolated fixed
points and the number of fixed points
   $|M^{\tau}|=\dim_{{\Bbb F}_2}(\bigoplus_i H^i(M;{\Bbb F}_2))$) determines a
  self-dual binary code of length $|M^{\tau}|$. Such an involution $\tau$ is called an
    \emph{$\mathrm{m}$-involution}. Conversely,
    Kreck--Puppe~\cite{kp} proved a somewhat surprising theorem that any self-dual binary
    code can be obtained from an $\mathrm{m}$-involution on some closed
    $3$-manifold.
   Hence it is an interesting problem for us to search
     $\mathrm{m}$-involutions on closed manifolds. But in practice
     it is very difficult to construct all possible $\mathrm{m}$-involutions
    on a given manifold. \vskip .1cm

    On the other hand, Davis and Januszkiewicz in~\cite{dj} introduced a class of closed smooth
  manifolds $M^n$ with locally standard actions of elementary 2-group $\Z_2^n$, called
   \emph{small covers}, whose orbit space is an
    $n$-dimensional simple convex polytope $P^n$ in $\R^n$. It was shown in~\cite{dj} that
    many geometric and topological properties of $M^n$ can be explicitly
    described in terms of the combinatorics of $P^n$ and some characteristic function
 on $P^n$ determined by the $\Z_2^n$-action.
  For example, the mod 2 Betti numbers of $M^n$ correspond to the $h$-vector of $P^n$.
   Any nonzero element $g\in\Z_2^n$ determines a nontrivial involution on $M^n$, denoted by
   $\tau_g$. We call $\tau_g$ a \emph{regular involution} on the small cover. So
   whenever $\tau_g$ is an $\mathrm{m}$-involution on $M^n$ where $n$ is odd,
    we obtain a self-dual binary code from $(M^n,\tau_g)$. \vskip .1cm

  Motivated by Kreck--Puppe and Davis--Januszkiewicz's work, our purpose in this paper is
  to explore the connection between the theory of binary codes and
  the combinatorics of simple polytopes via the topology of small covers.
  We will show that a small cover $M^n$ over an $n$-dimensional simple polytope $P^n$ admits
  a regular $\mathrm{m}$-involution only when
  $P^n$ is $n$-colorable.
  A polytope is called \emph{$n$-colorable} if we can color all the facets (codimension-one faces) of the polytope
   by $n$ different colors so that any
  neighboring facets are assigned different colors.
  Moreover, we find that the self-dual binary code
  obtained from a regular $\mathrm{m}$-involution on $M^n$
  depends only on the combinatorial structure  of $P^n$ and the parity of $n$.
 This motivates us to define a family of binary codes $\mathfrak{B}_k(P^n)$,
 $0\leq k \leq n$, for any simple polytope $P^n$ (not necessarily $n$-colorable).

\vskip .1cm

   The paper is organized as follows. In section 2, we 
   explain the procedure of obtaining self-dual binary codes as described in~\cite{p}
   from $\mathrm{m}$-involutions on closed manifolds. 
   In section 3, we first recall some basic facts of small covers and then
  investigate what kind of small covers can admit regular $\mathrm{m}$-involutions
  (see Theorem~\ref{max-involution}). In section 4, we
  spell out the self-dual binary code from a small cover with a regular
  $\mathrm{m}$-involution (see Corollary~\ref{Cor:Main-2}).
  It turns out that the self-dual
  binary code depends only on the combinatorial structure of the
  underlying simple polytope. In section 5, we
 study the properties of a family of binary codes $\mathfrak{B}_k(P^n)$,
 $0\leq k \leq n$, associated to any simple $n$-polytope $P^n$.
  A spinoff of our study produces some
  new criteria to judge whether $P^n$ is $n$-colorable in terms of
  the associated binary codes $\mathfrak{B}_k(P^n)$ (see Proposition~\ref{collection}).
  In section 6, we will give some necessary and sufficient conditions for
 $\mathfrak{B}_k(P^n)$ to be self-dual codes for general simple polytops $P^n$ 
 (see Theorem~\ref{thm:Self-Dual}). 
  In section 7, we prove that the minimum distance of the
   self-dual binary code obtained from any $3$-colorable simple $3$-polytope is always
    $4$ (see Proposition~\ref{prop:3-polytope}).
  In section 8, we investigate some special properties of
   $n$-colorable simple $n$-polytopes. In section 9, we study what kind of
  doubly-even binary codes can be obtained from $n$-colorable simple $n$-polytopes.
  In particular, we show
  that the extended Golay code and
  the extended quadratic residue code cannot be obtained from
   any $n$-colorable simple $n$-poltyopes.\vskip .6cm

  \section{Binary codes from $\mathbf{m}$-involutions on manifolds}\label{general-case}
 Let $\tau$ be an involution on a closed
 connected $n$-dimensional manifold $M$, which has only isolated fixed points.
Let $G_{\tau} \cong \Z_2$ denote the binary group generated by $\tau$.
By Conner~\cite[p.82]{cf}, the number $|M^{G_{\tau}}|$ of the
 fixed points of $G_{\tau}$ must be even. So we assume that $|M^{G_\tau}|=2r$, $r\geq 1$, in the following discussions. \vskip .1cm
By~\cite[Proposition(1.3.14)]{ap}, the following statements are equivalent.

\begin{itemize}
\item[(a)]  $|M^{G_{\tau}}|=\sum_{i=0}^n b_i(M; \Bbb F_2)$ (i.e. $\tau$ is
an $\mathrm{m}$-involution); \vskip .1cm
\item[(b)]  $H^*_{G_\tau}(M; {\Bbb F}_2)$ is a free $H^*(BG_\tau; {\Bbb F}_2)$-module, so
  $$H^*_{G_\tau}(M; {\Bbb F}_2)=H^*(M;{\Bbb F}_2)\otimes H^*(BG_\tau; {\Bbb F}_2);$$
\item[(c)] The inclusion of the fixed point set, $\iota: M^{G_{\tau}}\hookrightarrow M$, induces a monomorphism
$$\iota^*: H^*_{G_\tau}(M; {\Bbb F}_2)\rightarrow H^*_{G_\tau}(M^{G_\tau}; {\Bbb F}_2) \cong
{\Bbb F}^{2r}_2 \otimes {\Bbb F}_2[t].$$
\end{itemize}

 Next we assume that $\tau$ is an $\mathrm{m}$-involution on $M$.
 So
 the image of $H^*_{G_{\tau}}(M; {\Bbb F}_2)$ in ${\Bbb F}^{2r}_2 \otimes {\Bbb F}_2[t]$ under the
 localization map $\iota^*$ is isomorphic to $H^*_{G_{\tau}}(M; {\Bbb F}_2)$ as graded algebras.
 It is shown in~\cite{cl,p} that the image $\iota^*(H^*_{G_{\tau}}(M; {\Bbb F}_2))$
 can be described
 in the following way. For any vectors $x=(x_1,\ldots,x_{2r})$ and $y=(y_1,\ldots,y_{2r})$ in
${\Bbb F}_2^{2r}$, define
   $$x\circ y=(x_1y_1, \ldots , x_{2r}y_{2r}).$$
It is clear that ${\Bbb F}_2^{2r}$ forms a commutative ring with respect to
two operations $+$ and $\circ$. Actually, $({\Bbb F}_2^{2r}, +, \circ)$ is a boolean ring.
Notice that $x\circ x =x$ for any $x\in {\Bbb F}_2^{2r}$. Let
\begin{equation} \label{Equ:Big-V}
   \mathcal{V}_{2r}=\big\{x=(x_1,\ldots,x_{2r})\in
{\Bbb F}_2^{2r} \,\big| \, \langle x, x\rangle=\sum\limits_{i=1}^{2r}x_i=0 \in {\Bbb F}_2\big\}.
\end{equation}

  Then  $\mathcal{V}_{2r}$ is a $(2r-1)$-dimensional linear
  subspace of ${\Bbb F}_2^{2r}$. Note that for any $u\in \mathcal{V}_{2r}$, the Hamming weight
  $wt(u)$ of $u$ is an even integer. The following lemma is immediate from our definitions.

\begin{lem} \label{code}
 Let $C$ be a binary code in ${\Bbb F}_2^{2r}$ with $\dim_{{\Bbb F}_2} C =r$.
 Then the following statements are equivalent.
 \begin{enumerate}
\item[(C1)]
 $C$ is self-dual;
 \item[(C2)]  $\langle x, y\rangle=0$ for any $x, y\in C$;
 \item[(C3)] $x\circ y\in \mathcal{V}_{2r}$ for any $x, y\in C$.
 \end{enumerate}
\end{lem}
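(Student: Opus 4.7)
The plan is to prove the equivalences by the cycle (C1) $\Rightarrow$ (C2) $\Leftrightarrow$ (C3) $\Rightarrow$ (C1); the only real content is a dimension count and unwinding the definition of the operation $\circ$, so this lemma should be almost immediate once the right identity is isolated.

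The key identity I would record first is that, for any $x,y\in{\Bbb F}_2^{2r}$,
$$\langle x\circ y,\, x\circ y\rangle \;=\; \sum_{i=1}^{2r} x_iy_i \;=\; \langle x,y\rangle,$$
since $(x_iy_i)^2=x_iy_i$ in ${\Bbb F}_2$. Because $\mathcal{V}_{2r}$ is defined precisely by the vanishing of $\langle \cdot,\cdot\rangle$ on the diagonal, this makes the equivalence (C2) $\Leftrightarrow$ (C3) a one-line verification: $x\circ y\in\mathcal{V}_{2r}$ if and only if $\sum_i x_iy_i=0$ if and only if $\langle x,y\rangle=0$, and quantifying over $x,y\in C$ gives the lemma for these two conditions.

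For (C1) $\Leftrightarrow$ (C2), I would use the general identity $\dim_{{\Bbb F}_2} C+\dim_{{\Bbb F}_2} C^{\perp}=2r$ recalled in the introduction. Condition (C2) says exactly that $C\subseteq C^{\perp}$. Under the hypothesis $\dim_{{\Bbb F}_2} C=r$, we get $\dim_{{\Bbb F}_2} C^{\perp}=r$ as well, so any inclusion $C\subseteq C^{\perp}$ of equidimensional subspaces forces $C=C^{\perp}$, i.e.\ self-duality. The reverse implication is tautological: if $C=C^{\perp}$ then every element of $C$ pairs trivially with every other element of $C$.

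The hard part does not really exist here; the only thing worth being careful about is the hypothesis $\dim_{{\Bbb F}_2} C=r$, which is what makes ``totally isotropic'' equivalent to ``self-dual'' and is what the dimension count relies on. Without that hypothesis, (C2) and (C3) still characterize $C\subseteq C^{\perp}$ but not $C=C^{\perp}$. I would therefore state the proof in the order above, emphasizing the identity $\langle x\circ y,x\circ y\rangle=\langle x,y\rangle$ as the bridge between the $\circ$-formulation and the bilinear-form formulation, since this is the identity that will also be the workhorse in later sections when $\mathfrak{B}_k(P^n)$ is analyzed.
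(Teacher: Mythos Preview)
Your proof is correct and is exactly the unwinding the paper has in mind: the paper does not write out a proof at all, stating only that the lemma ``is immediate from our definitions,'' and your argument supplies precisely those immediate details (the identity $\langle x\circ y,x\circ y\rangle=\langle x,y\rangle$ for (C2)$\Leftrightarrow$(C3), and the dimension count $\dim C=\dim C^\perp=r$ for (C1)$\Leftrightarrow$(C2)).
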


   Moreover, let
 \begin{equation} \label{Equ:V_i}
     V^M_k = \{ y\in {\Bbb F}_2^{2r}  \,\big|\, y\otimes t^k \in \mathrm{Im}(\iota^*)\}
     \subset {\Bbb F}_2^{2r}, \  k=0,\ldots, n.
 \end{equation}
 By the localization theorem for equivariant cohomology (see~\cite{ap}), we have isomorphisms
 \begin{equation} \label{Equ:Isom-Cohom}
   H^k(M^n;{\Bbb F}_2) \cong V_k^M\slash V_{k-1}^M, \ 0\leq k \leq n.
 \end{equation}
 \vskip .1cm

\begin{thm} [{\cite[Theorem 3.1]{cl}} or {~\cite[p.213]{p}}] \label{ring}
 For any $0\leq k \leq n$, we have
$$\dim_{{\Bbb F}_2} V^M_k=\sum\limits_{j=0}^k b_j(M;\Bbb F_2).$$
 In addition, $H^*_{G_\tau}(M^n;{\Bbb F}_2)$ is isomorphic to
the graded ring
$$\mathcal{R}_M=V^M_0+ V^M_1t+\cdots+
V^M_{n-2}t^{n-2}+ V^M_{n-1}t^{n-1}+ {\Bbb
F}_2^{2r}(t^{n}+t^{n+1}+\cdots)$$ where the ring structure of
$\mathcal{R}_M$ is given by
\begin{enumerate}
\item[(a)] ${\Bbb F}_2\cong V^M_0\subset V_1^M\subset\cdots\subset V_{n-2}^M\subset V_{n-1}^M=\mathcal{V}_{2r}\subset   V^M_n = {\Bbb F}^{2r}_2$,
where $V^M_0$ is generated by $\underline{1}=(1,\ldots,1)\in {\Bbb F}_2^{2r}$;
\item[(b)] For $d=\sum\limits_{i=0}^{n-1}i d_i<n$ with
each $d_i\geq0$, $v_{\omega_{d_0}}\circ\cdots\circ
v_{\omega_{d_{n-1}}}\in V^M_{d}$, where
$$v_{\omega_{d_i}}=v^{(i)}_1\circ\cdots\circ v^{(i)}_{d_i},\ v^{(i)}_j\in V^M_i.$$
 The operation $\circ$ on ${\Bbb F}^{2r}_2$ corresponds to
the cup product in $H^*_{G_\tau}(M;{\Bbb F}_2)$.
\end{enumerate}
\end{thm}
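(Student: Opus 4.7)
The plan is to analyze the image of the localization monomorphism $\iota^*$ inside $\mathbb{F}_2^{2r}\otimes\mathbb{F}_2[t]$, combined with the free $\mathbb{F}_2[t]$-module structure of $H^*_{G_\tau}(M;\mathbb{F}_2)$ coming from the m-involution hypothesis. First I would establish the dimension formula. Condition (b) gives $H^*_{G_\tau}(M;\mathbb{F}_2)\cong H^*(M;\mathbb{F}_2)\otimes\mathbb{F}_2[t]$ as graded $\mathbb{F}_2$-algebras, so $\dim H^k_{G_\tau}(M;\mathbb{F}_2)=\sum_{j=0}^k b_j(M;\mathbb{F}_2)$. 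Since every homogeneous degree-$k$ element of $\mathbb{F}_2^{2r}\otimes\mathbb{F}_2[t]$ has the unique form $y\otimes t^k$, the degree-$k$ part of $\mathrm{Im}(\iota^*)$ equals $V_k^M\otimes t^k$, and injectivity of $\iota^*$ yields $\dim V_k^M=\sum_{j=0}^k b_j(M;\mathbb{F}_2)$.

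Next I would verify the filtration (a). The pullback $\widetilde{t}\in H^1_{G_\tau}(M;\mathbb{F}_2)$ of the generator of $H^1(BG_\tau;\mathbb{F}_2)$ restricts under $\iota^*$ to $\underline{1}\otimes t$; multiplication by $\widetilde{t}$ therefore sends $y\otimes t^k$ to $y\otimes t^{k+1}$, giving $V_k^M\subseteq V_{k+1}^M$. Then $\iota^*(1)=\underline{1}$ together with $\dim V_0^M=b_0=1$ produces $V_0^M=\mathbb{F}_2\cdot\underline{1}$, while $\dim V_n^M=\sum_{j=0}^n b_j=2r$ (by the defining identity of an m-involution) forces $V_n^M=\mathbb{F}_2^{2r}$.

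The crux is the identification $V_{n-1}^M=\mathcal{V}_{2r}$. Here I would invoke the $\mathbb{F}_2$-coefficient Atiyah--Bott--Berline--Vergne localization formula
\begin{equation*}
\int_M \omega \;=\; \sum_{p\in M^{G_\tau}} \frac{\iota_p^*\omega}{e_p(T_pM)} \;\in\; \mathbb{F}_2[t,t^{-1}].
\end{equation*}
Each isolated fixed point of a $\mathbb{Z}_2$-action on an $n$-manifold contributes equivariant Euler class $e_p(T_pM)=t^n$. For $\omega\in H^k_{G_\tau}(M;\mathbb{F}_2)$ with $k<n$, the integral lies in $H^{k-n}_{G_\tau}(\mathrm{pt};\mathbb{F}_2)=0$; substituting $\iota^*\omega=y\otimes t^k$ gives $t^{k-n}\sum_p y_p=0$, so $\sum_p y_p=0$. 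Hence $V_{n-1}^M\subseteq\mathcal{V}_{2r}$, and the matching dimension $2r-1$ forces equality.

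Finally, under the embedding $\iota^*$ the cup product becomes componentwise multiplication in $\mathbb{F}_2^{2r}\otimes\mathbb{F}_2[t]$: $(y_1\otimes t^{k_1})(y_2\otimes t^{k_2})=(y_1\circ y_2)\otimes t^{k_1+k_2}$, which immediately yields condition (b) and completes the identification of $H^*_{G_\tau}(M;\mathbb{F}_2)$ with $\mathcal{R}_M$. The main technical obstacle is the step $V_{n-1}^M=\mathcal{V}_{2r}$; if one wishes to avoid invoking the localization formula over $\mathbb{F}_2$, one can instead compute the equivariant Kronecker pairing with the fundamental class $[M]$ directly, using that multiplication by $\widetilde{t}$ lands in an ideal killed by this pairing.
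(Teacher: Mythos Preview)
The paper does not prove this theorem; it is quoted verbatim from \cite[Theorem 3.1]{cl} and \cite[p.~213]{p} and used as a black box. So there is no in-paper argument to compare against.

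Your reconstruction is correct and is essentially the argument one finds in the cited sources. The dimension count is exactly right (freeness over $\mathbb{F}_2[t]$ plus injectivity of $\iota^*$), the chain of inclusions via multiplication by the pulled-back generator $\widetilde{t}$ is the standard mechanism, and the multiplicative statement (b) is immediate once one observes that $\iota^*$ is a ring map into $(\mathbb{F}_2^{2r},+,\circ)\otimes\mathbb{F}_2[t]$. For the key step $V^M_{n-1}=\mathcal{V}_{2r}$, your appeal to the $\mathbb{F}_2$-coefficient localization/integration formula is legitimate: the relevant machinery for $\mathbb{Z}_2$-actions with $\mathbb{F}_2$ coefficients is developed in \cite{ap}, which the paper already cites, and the equivariant Euler class at each isolated fixed point is indeed $t^n$. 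The dimension match $\dim V^M_{n-1}=2r-b_n(M;\mathbb{F}_2)=2r-1$ (using that $M$ is closed and connected) then forces equality with $\mathcal{V}_{2r}$. Your alternative sketch via the Kronecker pairing is less precise as written, but it is not needed since the localization route goes through.
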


Each $V^M_k$ above can be thought of as a binary code in ${\Bbb F}_2^{2r}$.
Theorem~\ref{ring} and the Poincar\'e duality of $M$ implies that
 \begin{equation}\label{Equ:Sum-Dual}
   \dim_{{\Bbb F}_2} V^M_k  + \dim_{{\Bbb F}_2} V^M_{n-1-k} =
 \sum\limits_{j=0}^n b_j(M;\Bbb F_2)=2r.
 \end{equation}
  In addition, $V^M_{n-1-k}$ is perpendicular to $V^M_k$ with respect to
  $\langle \,\, , \, \rangle$. This is because
  $$H_G^k(M;{\Bbb F}_2)\cong V_k^Mt^k, \ \ H_G^{n-k-1}(M;{\Bbb F}_2)\cong V_{n-k-1}^Mt^{n-k-1}.$$
  So for any $x\in V_k^M$ and $y\in V_{n-k-1}^M$, we have
 $xt^k \cup yt^{n-k-1}=(x\circ y) t^{n-1}$ belongs to
 $H^{n-1}_G(M;{\Bbb F}_2)\cong \mathcal{V}_{2r}t^{n-1}$ by Theorem~\ref{ring}(b).
 Then by Lemma~\ref{code}, $x\circ y\in \mathcal{V}_{2r}$ implies
  $\langle x, y\rangle=0$. So we have $V^M_{n-1-k}\subset (V^M_k)^{\perp}$.
  Moreover, $\dim_{\mathbb{F}_2} V^M_{n-1-k} = \dim_{\mathbb{F}_2}  (V^M_k)^{\perp}$ by~\eqref{Equ:Sum-Dual}. This implies that
   \begin{equation} \label{Equ:Perp}
     (V^M_k)^{\perp} = V^M_{n-1-k}.
  \end{equation}

  \begin{cor}\label{gen-self-dual}
 $V_k^M \subset \mathbb{F}_2^{2r}$ is self-dual if and only if
  $\dim_{\mathbb{F}_2} V_k^M = \sum\limits_{j=0}^k b_j(M;\mathbb{F}_2)=r$.
  \end{cor}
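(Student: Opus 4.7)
The plan is to derive the corollary as a direct consequence of Theorem~\ref{ring} together with the duality relation $(V_k^M)^\perp = V_{n-1-k}^M$ from \eqref{Equ:Perp}. First I would record the two ingredients we need: the dimension formula $\dim_{\mathbb{F}_2} V_k^M = \sum_{j=0}^k b_j(M;\mathbb{F}_2)$ (Theorem~\ref{ring}) and the filtration of inclusions $V_0^M\subset V_1^M\subset\cdots\subset V_{n-1}^M\subset V_n^M=\mathbb{F}_2^{2r}$ from Theorem~\ref{ring}(a). Then everything reduces to comparing $V_k^M$ with $V_{n-1-k}^M$.

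For the ``only if'' direction, I would simply observe that if $V_k^M$ is self-dual then $\dim_{\mathbb{F}_2} V_k^M = \tfrac{1}{2}\dim_{\mathbb{F}_2}\mathbb{F}_2^{2r}=r$ (a property of self-dual codes already noted in the introduction), and then use the dimension formula from Theorem~\ref{ring} to rewrite this as $\sum_{j=0}^k b_j(M;\mathbb{F}_2)=r$.

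For the ``if'' direction, suppose $\dim_{\mathbb{F}_2}V_k^M=r$. By \eqref{Equ:Sum-Dual} we then also have $\dim_{\mathbb{F}_2}V_{n-1-k}^M=r$, and by \eqref{Equ:Perp} the space $V_{n-1-k}^M$ equals $(V_k^M)^\perp$. I only need to show $V_k^M = V_{n-1-k}^M$; but the filtration from Theorem~\ref{ring}(a) gives $V_{\min(k,n-1-k)}^M\subseteq V_{\max(k,n-1-k)}^M$, and with equal dimensions this inclusion must be an equality. Hence $V_k^M=(V_k^M)^\perp$, so $V_k^M$ is self-dual.

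I do not anticipate a real obstacle here: the corollary is essentially a bookkeeping consequence of results already proved in the section. The only point requiring a moment of care is making sure the comparison $V_k^M$ versus $V_{n-1-k}^M$ is handled uniformly for $k\le (n-1)/2$ and $k\ge (n-1)/2$, which the filtration in Theorem~\ref{ring}(a) settles immediately.
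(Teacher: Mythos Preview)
Your proposal is correct and follows essentially the same argument as the paper: the paper also declares the necessity trivial, and for sufficiency uses~\eqref{Equ:Sum-Dual} to get $\dim_{\mathbb{F}_2}V^M_{n-1-k}=r$, invokes the filtration from Theorem~\ref{ring}(a) to conclude $V^M_k=V^M_{n-1-k}$, and then applies~\eqref{Equ:Perp}. The only cosmetic difference is that you phrase the filtration step via $\min(k,n-1-k)$ and $\max(k,n-1-k)$, while the paper simply says ``either $V^M_k\subset V^M_{n-1-k}$ or $V^M_{n-1-k}\subset V^M_k$''.
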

  \begin{proof}
   The necessity is trivial. If $\dim_{\mathbb{F}_2} V_k^M=r$, then
    $\dim_{\mathbb{F}_2} V^M_{n-1-k} =r$ by~\eqref{Equ:Sum-Dual}.
    But by Theorem~\ref{ring}(a), we have either
    $V^M_k \subset V^M_{n-1-k}$ or $V^M_{n-1-k} \subset V^M_k$. Then
     $V^M_k$ and $V^M_{n-1-k}$ must be equal since they have the same dimension.
    So by~\eqref{Equ:Perp}, $(V^M_k)^{\perp} = V^M_{n-1-k} = V_k^M$. Hence
    $V_k^M$ is self-dual.
   \end{proof}
   
\vskip .6cm

  \section{Small covers with $\mathrm{m}$-involutions}

  \subsection{Small covers}\label{small}
  An $n$-dimensional simple (convex) polytope is a polytope such that each vertex of the polytope is exactly the intersection of $n$ \emph{facets} ($(n-1)$-dimensional faces) of the polytope.
   Following \cite{dj}, an $n$-dimensional {\em small cover} $\pi: M^n\rightarrow P^n$ is a
   closed smooth $n$-manifold $M^n$ with a locally
  standard $\Z_2^n$-action whose orbit space is homeomorphic to an $n$-dimensional
   simple convex polytope
  $P^n$, where a locally standard $\Z_2^n$-action on $M^n$ means that this
$\Z_2^n$-action on $M^n$ is locally isomorphic to a
faithful representation of $\Z_2^n$ on $\R^n$.
 Let $V(P^n)$ denote the set of all vertices of $P^n$ and $\mathcal{F}(P^n)$ denote
 the set of all facets of $P^n$.
 For any facet $F$ of $P^n$, the isotropy subgroup of
    $\pi^{-1}(F)$ in $M^n$ with respect to the $\Z_2^n$-action is
    a rank one subgroup of $\Z_2^n$ generated by an element of $\Z_2^n$,
    denoted by $\lambda(F)$.
    Then we obtain a map $\lambda: \mathcal{F}(P^n) \rightarrow
    \Z_2^n$ called
    the \emph{characteristic function} associated
     to $M^n$, which maps the $n$ facets meeting at each vertex of $P^n$
      to $n$ linearly independent elements in $\Z_2^n$.
     It is shown in~\cite{dj} that
     up to equivariant homeomorphisms, $M^n$
     can be recovered from $(P^n,\lambda)$ in a canonical way (see~\eqref{Equ:Glue-Back}).
  Moreover, many algebraic
 topological invariants of a small cover $\pi: M^n\rightarrow P^n$ can be easily
 computed from $(P^n, \lambda)$. Here is a list of facts on the cohomology rings of
 small covers proved in~\cite{dj}. \vskip .1cm

\begin{itemize}

\item[(R1)] Let $b_i(M^n;{\Bbb F_2})$ be the $i$-th mod 2 Betti number of $M^n$. Then
$$ b_i(M^n;{\Bbb F_2}) = h_i(P^n), \ 0\leq i \leq n$$
where $(h_0(P^n), h_1(P^n), \ldots, h_n(P^n))$ is the $h$-vector of $P^n$. \vskip .1cm

\item[(R2)] Let $M^{\Z_2^n}$ denote the fixed point set of the $\Z^n_2$-action on $M^n$. Then
 $$|M^{\Z_2^n}|=\sum_{i=0}^n b_i(M^n;{\Bbb F_2})=\sum_{i=0}^n h_i(P^n)=
   |V(P^n)|.$$ \vskip .1cm

\item[(R3)] The equivariant cohomology $H^*_{\Z_2^n}(M; {\Bbb F}_2)$ is isomorphic as graded rings to the Stanley--Reisner ring of $P^n$
 \begin{equation} \label{Equ:Equiv-Cohomology}
     H^*_{\Z_2^n}(M^n; {\Bbb F}_2) \cong {\Bbb F}_2(P^n)={\Bbb F}_2[a_{F_1},
        \ldots, a_{F_m}]/\mathcal{I}_{P^n}
     \end{equation}
where $F_1, \ldots, F_m$ are all the facets of $P^n$ and $a_{F_1},\ldots,a_{F_m}$ are of degree $1$, and $\mathcal{I}_{P^n}$ is the ideal generated by all square free monomials of $a_{F_{i_1}}\cdots a_{F_{i_s}}$ with $F_{i_1}\cap\cdots\cap F_{i_s}=\varnothing$ in $P^n$.\vskip .1cm

\item[(R4)] The mod-$2$ cohomology ring $H^*(M; {\Bbb F}_2)\cong {\Bbb F}_2[a_{F_1},
 \ldots, a_{F_m}]/\mathcal{I}_P+J_\lambda$, where $J_\lambda$ is an ideal determined by $\lambda$. In particular,
 $H^*(M; {\Bbb F}_2)$ is generated by degree $1$ elements.

\end{itemize}

\subsection{Spaces constructed from simple polytopes with $\Z_2^r$-colorings} \ \vskip .1cm
  Let $P^n$ be an $n$-dimensional simple polytope in $\R^n$.
    For any $r\geq 0$, a
  $\Z_2^r$-coloring on $P^n$ is a map
  $\mu: \mathcal{F}(P^n) \rightarrow \Z_2^r$. For any facet $F$ of $P^n$,
  $\mu(F)$ is called the \emph{color of $F$}.
  Let $f=F_1\cap \cdots \cap F_k$ be a codimension-$k$ face of $P^n$ where
  $F_1,\ldots, F_k\in \mathcal{F}(P^n)$. Define
     \begin{equation}\label{Equ:Subgroup}
     G^{\mu}_f = \text{the subgroup of $\Z_2^r$ generated by}\
     \mu(F_1), \ldots , \mu(F_k).
   \end{equation}
   Besides, let $G^{\mu}$ be the subgroup of $\Z_2^r$ generated by
    $\{ \mu(F)\,;\, F\in \mathcal{F}(P^n) \}$.
   The rank of $G^{\mu}$ is called the \emph{rank of $\mu$}, denoted by $\mathrm{rank}(\mu)$.
   It is clear that
   $\mathrm{rank}(\mu) \leq r$.\vskip .1cm

    For any point $p\in P^n$, let $f(p)$ denote the unique face of $P^n$ that contains $p$ in
  its relative interior.
   Then we define a space associated to $(P^n,\mu)$ by:
    \begin{equation} \label{Equ:Glue-Back}
        M(P^n,\mu) = P^n\times \Z_2^r \slash \sim
     \end{equation}
   where $(p,g) \sim (p',g')$ if and only if $p=p'$ and
   $g^{-1}g' \in G^{\mu}_{f(p)}$.

   \begin{itemize}
     \item $M(P^n,\mu)$ is a closed manifold if
        $\mu$ is \emph{non-degenerate} (i.e. $\mu(F_1),\ldots, \mu(F_k)$ are 
        linearly independent whenever $F_1\cap\cdots \cap F_k\neq \varnothing$). \vskip .1cm
   \item $M(P^n,\mu)$ has $2^{r-\mathrm{rank}(\mu)}$ connected components. So $M(P^n,\mu)$
    is connected if and only if
   $\mathrm{rank}(\mu)=r$. \vskip .1cm
   \item There is a canonical $\Z_2^r$-action on $M(P^n,\mu)$ defined by:
     $$h\cdot [(x,g)] = [(x,g+h)],\ x\in P^n,\, g,h\in \Z_2^r.$$
     let
    $\pi_{\mu}: M(P^n,\mu)\rightarrow P^n $ be the map sending any $[(x,g)]\in  M(P^n,\mu)$ to
    $x\in P^n$.
   \end{itemize}
   \vskip .1cm

    For any face $f$ of $P^n$ with $\dim(f)\geq 1$, let
     $r(f) = r-\mathrm{rank}(G^{\mu}_f)$ and
    $$\eta_f: \Z_2^r \rightarrow \Z_2^r\slash G^{\mu}_f \cong \Z_2^{r(f)}$$ be
    the quotient homomorphism.
    Then $\mu$ induces a $\Z_2^{r(f)}$-coloring $\mu_f$ on $f$ by:
       \begin{equation}\label{Induced-Coloring}
        \mu_f(F\cap f) := \eta_f( \mu(F)), \ \text{where} \ F\in \mathcal{F}(P^n), \
          \dim(F\cap f)=\dim(f)-1.
       \end{equation}
    It is easy to see that $\pi^{-1}_{\mu}(f)$ is homeomorphic to $M(f, \mu_f)$. \vskip .1cm

   \begin{exam} \label{Exam:Small-Cover}
   Suppose $\pi: M^n \rightarrow P^n$ is a small cover
    with characteristic function $\lambda$. Then
    $M^n$ is homeomorphic to $M(P^n,\lambda)$. For any face $f$ of $P^n$,
     $\pi^{-1}(f)\cong M(f, \lambda_f)$ is a closed connected
     submanifold of $M^n$ (called a \emph{facial submanifold} of $M^n$), which is a small cover over $f$.
    \end{exam}
    \vskip .1cm

   \subsection{Small covers with regular $\mathbf{m}$-involutions}\label{involution}
  Let $\pi: M^n\rightarrow P^n$ be a small cover over an $n$-dimensional simple polytope $P^n$
  and $\lambda: \mathcal{F}(P^n)\rightarrow \Z_2^n$ be its characteristic
function. Let us discuss under what condition there exists a regular $\mathrm{m}$-involution
on $M^n$.

\begin{thm}\label{max-involution}
The following statements are equivalent.
\begin{itemize}
\item[(a)] There exists a regular $\mathrm{m}$-involution on $M^n$.
\item[(b)] There exists a regular involution on $M^n$ with only isolated fixed points;
\item[(c)] The image $\IM \lambda \subset \Z_2^n$ of $\lambda$
   consists of exactly $n$ elemnets (which implies that $P^n$ is
 $n$-colorable) and so they form a basis of $\Z_2^n$.
\end{itemize}
\end{thm}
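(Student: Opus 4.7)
The plan is to prove (a) $\Leftrightarrow$ (b) by counting fixed points, (c) $\Rightarrow$ (b) by an explicit choice of $g$, and (b) $\Rightarrow$ (c), the main content, by propagating an identity along the $1$-skeleton of $P^n$.

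For (a) $\Leftrightarrow$ (b), I analyze the fixed locus of $\tau_g$ via $M^n \cong M(P^n,\lambda)$ from \eqref{Equ:Glue-Back}: a class $[(x,h)]$ is fixed by $\tau_g$ exactly when $g \in G^{\lambda}_{f(x)}$, and since $G^{\lambda}_{f} \supset G^{\lambda}_{f'}$ whenever $f \subset f'$, the fixed set is positive-dimensional if and only if $g \in G^{\lambda}_e$ for some edge $e$. Hence a regular involution with only isolated fixed points corresponds to an element $g$ avoiding every $G^{\lambda}_e$, and for such $g$ the fixed set is precisely the $|V(P^n)|$ lifts of the vertices. By (R1) and (R2), $|V(P^n)| = \sum_i h_i(P^n) = \sum_i b_i(M^n;{\Bbb F}_2)$, so comparing with the characterizations of an $\mathrm{m}$-involution in Section~\ref{general-case} yields (a) $\Leftrightarrow$ (b) immediately.

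For (c) $\Rightarrow$ (b), I take $g = e_1 + \cdots + e_n$ where $\IM\lambda = \{e_1,\ldots,e_n\}$ is the given basis. Each edge $e$ is the intersection of $n-1$ facets whose colors, by $n$-colorability, are $n-1$ distinct basis vectors, so $G^{\lambda}_e$ is a hyperplane missing exactly one of the $e_i$, and $g$ lies outside it.

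Assume (b) holds, so some nonzero $g$ satisfies $g \notin G^{\lambda}_e$ for every edge $e$. At a vertex $v$ I expand $g = \sum_i c_i \lambda(F_i^v)$ in the basis given by the $n$ facets $F_1^v,\ldots,F_n^v$ through $v$; the $n$ edges at $v$ are parameterized by which facet is dropped, and the condition $g \notin G^{\lambda}_{e_i}$ for the edge obtained by dropping $F_i^v$ forces $c_i = 1$. Hence $g = \sum_{F \ni v} \lambda(F)$ at every vertex $v$. Comparing this identity at two adjacent vertices $v, v'$ joined by an edge $e$, the $n-1$ facets containing $e$ cancel and I obtain $\lambda(F_v) = \lambda(F_{v'})$ for the two ``opposite'' facets, i.e.\ the facet at $v$ and the facet at $v'$ that do not contain $e$. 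Consequently the $n$-element set $\{\lambda(F_1^v),\ldots,\lambda(F_n^v)\}$ is invariant under moving $v$ along an edge; by connectivity of the $1$-skeleton of $P^n$ it is independent of $v$, and since every facet of $P^n$ contains at least one vertex, this constant set coincides with $\IM\lambda$. Thus $|\IM\lambda| = n$ and these values form a basis of $\Z_2^n$, proving (c). The delicate step is verifying that ``opposite across an edge'' preserves $\lambda$-values uniformly, which is controlled by the single identity $g = \sum_{F \ni v}\lambda(F)$ that holds at every vertex simultaneously.
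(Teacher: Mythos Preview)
Your proof is correct and follows essentially the same route as the paper. The paper proves the cycle (a)$\Rightarrow$(b)$\Rightarrow$(c)$\Rightarrow$(a), using the locally standard form of the action near a fixed point to obtain $g=\sum_{F\ni v}\lambda(F)$ and then propagating along edges exactly as you do; your version packages (a)$\Leftrightarrow$(b) together via the glue-back description~\eqref{Equ:Glue-Back} and proves (c)$\Rightarrow$(b) instead of (c)$\Rightarrow$(a), but the substantive step (b)$\Rightarrow$(c) is identical in both arguments.
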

\begin{proof}
 (a) implies (b) since by definition an $\mathrm{m}$-involution only
 has isolated fixed point.  \vskip .1cm

 (b)$\Rightarrow$(c) Suppose there exists $g\in \Z_2^n$ so that the fixed points of $\tau_g$
  on $M^n$ are all isolated.
  Let $v$ be an arbitrary vertex on $P^n$ and $F_1,\ldots,F_n$ be the $n$ facets
  meeting at $v$. By the construction of small covers,
  $\pi^{-1}(v)=p$ is a fixed point of the whole group $\Z_2^n$.
  Let $U\subset M$ be a small neighborhood of $p$. Since the action of $\Z_2^n$ on $M^n$ is locally standard, we observe that for
  $h= \lambda(F_{i_1}) + \cdots + \lambda(F_{i_s}) \in \Z_2^n$, $1\leq i_1 < \cdots < i_s\leq n$,
  the dimension of the fixed point set of $\tau_h$ in $U$ is equal to $n-s$.
  Then since the fixed points of $\tau_g$ are all isolated, we must have
   $g=\lambda(F_1)+\cdots +
\lambda(F_n)$.\vskip .1cm
  Next, take an edge of $P^n$ with two endpoints $v_1, v_2$.
  Since $P^n$ is simple, there are $n+1$ facets $F_1, \ldots, F_n, F'_n$ such that
 $v_1=F_1\cap\cdots\cap F_{n-1}\cap F_n$ and $v_2=F_1\cap\cdots\cap
F_{n-1}\cap F'_{n}$. Then
$\lambda(F_1)+\cdots+\lambda(F_{n-1})+\lambda(F_n)=g=\lambda(F_1)+\cdots+\lambda(F_{n-1})+\lambda(F'_{n})$,
which implies $\lambda(F_n)=\lambda(F'_{n})$.
Since the 1-skeleton of $P^n$ is connected,
we can deduce the image $\IM \lambda$ of $\lambda$ consists of $n$ elements of $\Z_2^n$
 which form a basis of $\Z_2^n$.\vskip .1cm

 (c)$\Rightarrow$(a) Suppose $\IM \lambda =\{ g_1,\ldots, g_n\}$
 is a basis of $\Z_2^n$. Then by the construction of small covers, the fixed point set of
  the regular involution $\tau_{g_1+\cdots+g_n}$ on $M^n$ is
  $$\{ \pi^{-1}(v)\,|\, v\in V(P^n)\} = M^{\Z_2^n}.$$
 So the number of fixed points of $\tau_{g_1+\cdots+g_n}$ is equal to the number of
 vertices of $P^n$, which is known to be $h_0(P^n)+h_1(P^n)+\cdots+h_n(P^n)$.
 Then by the result (R1) in section~\ref{small}, $\tau_{g_1+\cdots+g_n}$ is
 an $\mathrm{m}$-involution on $M^n$.
\end{proof}

\begin{rem}
It should be pointed out that for an $n$-colorable simple $n$-polytope $P^n$,
 the image of a characteristic
function $\lambda: \mathcal{F}(P^n)\rightarrow \Z_2^n$ might consist of more than
 $n$ elements of $\Z_2^n$. In that case,
the small cover defined by $P^n$ and $\lambda$ admits no regular $\mathrm{m}$-involutions.
So Theorem~\ref{max-involution} only tells us that
if an $n$-dimensional small cover $M^n$ over $P^n$ admits a regular $\mathrm{m}$-involution, then $P^n$ is $n$-colorable. But conversely, this is not true.
\end{rem}

\subsection{Descriptions of $n$-colorable $n$-dimensional simple polytopes}
\ \vskip .1cm

  The following descriptions of $n$-colorable simple $n$-polytopes are due to Joswig~\cite{jos}.

 \begin{thm} [{\cite[Theorem 16 and Corollary 21]{jos}}]\label{j}
 Let $P^n$ be an $n$-dimensional simple polytope, $n\geq 3$.
  The following statements are equivalent.
 \begin{itemize}
   \item[(a)]$P^n$ is $n$-colorable;\vskip .1cm
   \item[(b)] Each $2$-face of $P^n$ has an even number of vertices.\vskip .1cm
   \item[(c)] Each face of $P^n$ with dimension greater than $0$ (including $P^n$ itself) has an even number of vertices.
   \item[(d)] Any proper $k$-face of $P^n$ is $k$-colorable.
 \end{itemize}
 \end{thm}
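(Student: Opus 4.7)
The plan is to prove the cycle of implications (a)$\Rightarrow$(d)$\Rightarrow$(b)$\Rightarrow$(a), together with (a)$\Rightarrow$(c)$\Rightarrow$(b). Of these, (c)$\Rightarrow$(b) is immediate since a $2$-face is itself a positive-dimensional face, and (d)$\Rightarrow$(b) is immediate because for $n\geq 3$ every $2$-face is proper, and any $2$-colorable polygon has an even number of edges.

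For (a)$\Rightarrow$(d), I would restrict a chosen $n$-coloring $c\colon\mathcal{F}(P^n)\to\{1,\ldots,n\}$ to a proper $k$-face $f=F_{i_1}\cap\cdots\cap F_{i_{n-k}}$. The $n-k$ facets $F_{i_j}$ pairwise intersect and therefore carry $n-k$ distinct colors; at each vertex $v\in f$ the remaining $k$ facets of $P^n$ through $v$ cut out the facets of $f$ through $v$ and are forced to use the complementary $k$ colors. Assigning the facet $F\cap f$ of $f$ the color $c(F)$ then yields a proper coloring of $f$ with $k$ colors.

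The crux is (b)$\Rightarrow$(a). I would fix a base vertex $v_0$, color the $n$ facets at $v_0$ with $n$ distinct colors, and propagate along edges of $P^n$: for any edge from $v$ to $v'$ there are $n-1$ common facets, so the one new facet at $v'$ must inherit the color of the one facet at $v$ that disappears. Since the $1$-skeleton of each facet of $P^n$ is connected, this assigns a candidate color to every facet. Because $\partial P^n\cong S^{n-1}$ is simply connected for $n\geq 3$, every loop in the $1$-skeleton is null-homotopic modulo boundaries of $2$-faces, so path-independence of this propagation reduces to closure around each $2$-face $f$. Around $\partial f$ the $n-2$ facets containing $f$ remain fixed, while the two ``extra'' facets at each vertex of $f$ are alternately swapped for new facets of the same color; the original colored configuration reappears exactly when the edge-count of $f$ is even, which is hypothesis (b). I expect the main obstacle to be formalizing this monodromy argument cleanly, in particular the reduction from simple-connectedness of $\partial P^n$ to closure around each individual $2$-face.

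Finally, for (a)$\Rightarrow$(c), applying (a)$\Rightarrow$(d) shows that every proper face $f$ of $P^n$ of positive dimension is $\dim(f)$-colorable, while $P^n$ itself is $n$-colorable by (a). For any $k$-colorable simple $k$-polytope $Q$, I would assign each edge $e$ at a vertex $v$ the color of the unique facet at $v$ not containing $e$; since the two endpoints of $e$ share the same $k-1$ facets containing $e$, this color is well-defined along $e$. The resulting proper edge-coloring of $Q$ with $k$ colors has each color class forming a perfect matching on $V(Q)$, so in particular $|V(Q)|$ is even, which is (c).
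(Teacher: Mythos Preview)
The paper does not prove this theorem; it is quoted from Joswig~\cite{jos} (Theorem~16 and Corollary~21 there) and used as a black box, so there is no in-paper proof to compare against.

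That said, your outline is correct and is in fact essentially Joswig's own argument. The easy implications (c)$\Rightarrow$(b), (d)$\Rightarrow$(b), (a)$\Rightarrow$(d) are handled exactly as you describe, and your perfect-matching argument for (a)$\Rightarrow$(c) is the standard one. For the substantive direction (b)$\Rightarrow$(a), your edge-propagation scheme is precisely what Joswig calls the \emph{group of projectivities}: one transports the local ``frame'' of facet-colors at a vertex along edge-paths, and $n$-colorability is equivalent to this holonomy being trivial. The step you flag as the main obstacle---reducing path-independence to closure around individual $2$-faces---is the content of Joswig's Proposition~6: since $\partial P^n\cong S^{n-1}$ is simply connected for $n\geq 3$, every edge-loop is a product of boundaries of $2$-faces (equivalently, the $2$-skeleton of the dual simplicial sphere carries $\pi_1$), so the holonomy group is generated by the monodromies around $2$-faces. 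Your parity computation around a single $2$-face then finishes the argument exactly as in~\cite{jos}.
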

 Later we will give some
  new descriptions of $n$-colorable simple $n$-polytopes from our study of
 binary codes associated to general simple polytopes in section 5.
 \vskip .6cm

\section{Self-dual binary codes from small covers}
 Let $\pi: M^n\rightarrow P^n$ be an $n$-dimensional small cover which admits a regular $\mathrm{m}$-involution.
 By Theorem~\ref{max-involution}, $P^n$ is an $n$-dimensional $n$-colorable simple polytope
 with an even number of vertices. Let
  $\{ v_1,\ldots, v_{2r} \}$ be all the vertices of $P^n$.
 The characteristic function $\lambda$ of $M^n$
 satisfies: $\mathrm{Im}(\lambda) =\{e_1,\ldots, e_n\}$ is a basis of $\Z_2^n$.
By Theorem~\ref{max-involution}, $\tau_{e_1+\cdots+e_n}$ is an $\mathrm{m}$-involution
on $M^n$. So by the discussion in section~\ref{general-case}, we obtain a filtration
$${\Bbb F}_2\cong V^{M}_0\subset V_1^{M}\subset\cdots\subset V_{n-2}^{M}\subset V_{n-1}^{M}=\mathcal{V}_{2r}\subset V^M_n= {\Bbb F}^{2r}_2.$$
According to Theorem~\ref{ring} and the property (R1) of small covers,
 $$\dim_{{\Bbb F}_2} V^M_k=\sum\limits_{j=0}^k b_j(M^n; {\Bbb F}_2)= \sum\limits_{j=0}^k h_j(P^n)
  , \ 0\leq k \leq n.$$
 Then since $h_j(P^n)>0$ for all $0\leq j\leq n$, we have
 $V^M_0\subsetneq V_1^M\subsetneq\cdots\subsetneq V_{n-1}^M\subsetneq  V^M_n = {\Bbb F}^{2r}_2$.
  Note that $V^M_k$ is self-dual in ${\Bbb F}^{2r}_2$ if and only if
 $V^M_k = (V^M_k)^{\perp} = V^M_{n-1-k}$ by~\eqref{Equ:Perp}.
 Then $V^M_k$ is self-dual if and only if $k=n-1-k$ (i.e. $n$ is odd and $k=\frac{n-1}{2}$).
 So we prove the following proposition.

\begin{prop}\label{suff-nece}
Let $\pi: M^n\rightarrow P^n$ be an $n$-dimensional small cover which admits a regular $\mathrm{m}$-involution. Then $V_k^{M}$ is a self-dual code if and only if
$n$ is odd and $k={{n-1}\over 2}$.
\end{prop}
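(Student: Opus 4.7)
The plan is to combine the structural results already established—specifically Theorem~\ref{max-involution} (existence of the $\mathrm{m}$-involution $\tau_{e_1+\cdots+e_n}$), Theorem~\ref{ring} (the filtration and its duality), and the $h$-vector identification from property (R1)—and then extract the self-duality condition purely from dimension counting against the Poincar\'e-duality pairing $(V^M_k)^\perp = V^M_{n-1-k}$ of equation~\eqref{Equ:Perp}.

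First I would invoke Theorem~\ref{max-involution} to fix the regular $\mathrm{m}$-involution: since a small cover $\pi\colon M^n\to P^n$ admits a regular $\mathrm{m}$-involution, its characteristic function $\lambda$ has image a basis $\{e_1,\dots,e_n\}$ of $\Z_2^n$, and $\tau_{e_1+\cdots+e_n}$ is an $\mathrm{m}$-involution on $M^n$. Applying the apparatus of Section~\ref{general-case} to this involution yields the filtration
\[
\mathbb{F}_2\cong V^M_0\subset V^M_1\subset\cdots\subset V^M_{n-1}=\mathcal{V}_{2r}\subset V^M_n=\mathbb{F}_2^{2r}
\]
together with $\dim_{\mathbb{F}_2}V^M_k=\sum_{j=0}^k b_j(M^n;\mathbb{F}_2)$ from Theorem~\ref{ring}.

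Next I would translate Betti numbers into combinatorics via (R1): $b_j(M^n;\mathbb{F}_2)=h_j(P^n)$. Since $P^n$ is a simple convex polytope, every component of its $h$-vector satisfies $h_j(P^n)>0$, so each inclusion $V^M_{k-1}\subsetneq V^M_k$ is strict. This strictness is the crucial point. Combining with~\eqref{Equ:Perp}, $V^M_k$ is self-dual precisely when $V^M_k=(V^M_k)^\perp=V^M_{n-1-k}$, and the strictness of the filtration forces $k=n-1-k$, i.e.\ $n$ odd and $k=(n-1)/2$. Conversely, when $n$ is odd and $k=(n-1)/2$, the identification $V^M_k=V^M_{n-1-k}=(V^M_k)^\perp$ is immediate from~\eqref{Equ:Perp}, giving self-duality.

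I do not expect a significant obstacle here, since every ingredient (the existence of the filtration, the duality pairing, and the explicit dimension formula) has already been established in the preceding sections; the argument is essentially a bookkeeping exercise. The only point that needs to be stated carefully—and which one could call the heart of the proof—is that positivity of the $h$-vector of a simple polytope makes the filtration strict, so that equality of two terms forces equality of their indices. Everything else is a direct appeal to Theorem~\ref{max-involution}, Theorem~\ref{ring}, and equation~\eqref{Equ:Perp}.
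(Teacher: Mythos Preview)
Your proposal is correct and follows essentially the same route as the paper: invoke Theorem~\ref{max-involution} to produce the $\mathrm{m}$-involution, use Theorem~\ref{ring} and (R1) to compute $\dim V^M_k=\sum_{j\le k}h_j(P^n)$, observe that positivity of the $h$-vector makes the filtration strict, and then conclude from~\eqref{Equ:Perp} that self-duality forces $k=n-1-k$. The only cosmetic difference is that the paper phrases the argument in the paragraph preceding the proposition rather than as a separate proof.
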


 In the remaining part of this section, we will describe each $V_k^{M}$, $0 \leq k \leq n$, explicitly in terms of the combinatorics of $P^n$.
 First, any face $f$ of $P^n$ determines an element $\xi_f\in {\Bbb F}^{2r}_2$ where
  the $i$-th entry of $\xi_f$ is $1$ if and only if $v_i$ is a vertex of $f$.
 In particular, $\xi_{P^n} = \underline{1} = (1,\ldots , 1)\in {\Bbb F}^{2r}_2 $ and
 $\{\xi_{v_1},\cdots, \xi_{v_{2r}}\}$ is a linear basis of $\mathbb{F}^{2r}$.
 Note that for any faces $f_1,\ldots, f_s$ of $P^n$, we have
  \begin{equation} \label{Equ:Product-Face}
     \xi_{f_1\cap\cdots\cap f_s} = \xi_{f_1}\circ\cdots\circ \xi_{f_s}.
  \end{equation}
 We define a sequence of binary codes $\mathfrak{B}_k(P^n) \subset {\Bbb F}^{2r}_2$
 as follows.
 \begin{equation} \label{Equ:Def-Bk}
    \mathfrak{B}_k(P^n):= \text{Span}_{\Bbb F_2}\{\xi_f \,;\,  f  \
                                                 \text{is a codimension-$k$ face of
      $P^n$}\},\ 0\leq k \leq n.
    \end{equation}
    \vskip .1cm

\begin{rem}
  Changing the ordering of the vertices of $P^n$ only causes the coordinate changes in
  $\mathbb{F}_2^n$. So up to equivalences of binary codes, each
   $\mathfrak{B}_k(P^n)$ is uniquely determined by $P^n$.
\end{rem}

 \begin{lem} \label{Lem:Inclusion}
 For any $n$-colorable simple $n$-polytope $P^n$ with $2r$ vertices, we have
   $$ \mathfrak{B}_0(P^n) \subset \mathfrak{B}_1(P^n) \subset\cdots\subset
   \mathfrak{B}_{n-1}(P^n) = \mathcal{V}_{2r} \subset \mathfrak{B}_n(P^n)\cong {\Bbb F}_2^{2r}.$$
 \end{lem}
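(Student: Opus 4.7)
The plan is to verify the three parts of the chain separately: the step inclusions $\mathfrak{B}_k(P^n)\subset\mathfrak{B}_{k+1}(P^n)$ for $0\le k\le n-1$, the middle identification $\mathfrak{B}_{n-1}(P^n)=\mathcal{V}_{2r}$, and the outer relation $\mathfrak{B}_n(P^n)\cong \mathbb{F}_2^{2r}$. The last of these is immediate because $\{\xi_{v_1},\ldots,\xi_{v_{2r}}\}$ is already a linear basis of $\mathbb{F}_2^{2r}$, and the inclusion $\mathcal{V}_{2r}\subset \mathfrak{B}_n(P^n)$ is then trivial.

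First I would handle the step inclusions, and this is where the $n$-colorability hypothesis enters. Fix a proper $n$-coloring $c:\mathcal{F}(P^n)\to\{1,\ldots,n\}$ and let $f$ be any codimension-$k$ face of $P^n$ with $\dim f = d = n-k\ge 1$; write $f = F_{i_1}\cap\cdots\cap F_{i_k}$ (with the convention $f = P^n$ when $k=0$). The facets of $f$, viewed as a $d$-polytope, are precisely the codimension-$(k+1)$ faces $f\cap F$ of $P^n$ as $F$ ranges over facets of $P^n$ adjacent to but not containing $f$, and their colors $c(F)$ lie in the $d$-element set $\{1,\ldots,n\}\setminus\{c(F_{i_1}),\ldots,c(F_{i_k})\}$. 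Because $P^n$ is simple, each vertex of $f$ lies in exactly $d$ facets of $f$, and the proper coloring forces those $d$ facets to carry $d$ distinct colors. Hence, for any single color $j$ in the remaining palette, each vertex of $f$ belongs to exactly one facet of $f$ colored $j$, giving
\[
  \sum_{g\text{ a facet of }f,\ c(g)=j}\xi_g \;=\; \xi_f \quad\text{in}\ \mathbb{F}_2^{2r},
\]
which realizes $\xi_f$ as an element of $\mathfrak{B}_{k+1}(P^n)$.

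Next, to identify $\mathfrak{B}_{n-1}(P^n)$ with $\mathcal{V}_{2r}$, the inclusion $\subset$ is clear since every edge indicator $\xi_e$ has Hamming weight $2$. For the reverse inclusion, I would use that $\mathcal{V}_{2r}$ is spanned by the pairwise sums $\xi_{v_i}+\xi_{v_j}$; by connectedness of the $1$-skeleton of $P^n$, one can choose a path of edges $e_1,\ldots,e_t$ joining $v_i$ to $v_j$, and telescoping in $\mathbb{F}_2^{2r}$ gives $\sum_{s=1}^t\xi_{e_s}=\xi_{v_i}+\xi_{v_j}$, placing every generator of $\mathcal{V}_{2r}$ inside $\mathfrak{B}_{n-1}(P^n)$.

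The real obstacle would be the step inclusion when $d=n-k$ is even: the naive symmetric sum $\sum_{g\text{ facet of }f}\xi_g$ equals $d\cdot\xi_f\equiv 0\pmod{2}$ and fails to recover $\xi_f$. Restricting the sum to a single color class defeats this parity obstruction, and is precisely the point at which $n$-colorability is indispensable; the middle and outer parts of the chain, by contrast, go through for an arbitrary simple $n$-polytope.
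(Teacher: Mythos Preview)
Your proof is correct and follows essentially the same approach as the paper's: both establish the step inclusions by summing the indicator vectors over a single color class of facets of $f$, and both identify $\mathfrak{B}_{n-1}(P^n)$ with $\mathcal{V}_{2r}$ via edge paths in the $1$-skeleton. The only cosmetic difference is that the paper invokes Joswig's theorem (Theorem~\ref{j}(d)) to assert each proper face is itself colorable before iterating, whereas you work directly with the restriction of the ambient $n$-coloring to the facets of $f$.
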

 \begin{proof}
  By definition, $P^n$ can be colored by $n$ colors $\{e_1,\ldots, e_n\}$. Now choose an arbitrary color say $e_j$, we observe that
    each vertex of $P^n$ is contained in exactly one facet of $P^n$ colored by $e_j$.
    This implies that
    $$ \xi_{P^n} = \xi_{F_1} +\cdots + \xi_{F_s} $$
    where $F_1,\ldots, F_s$ are all the facets of $P^n$ colored by $e_j$.
    So $\mathfrak{B}_0(P^n) \subset \mathfrak{B}_1(P^n)$.
   Moreover, by Theorem~\ref{j}(d), the facets $F_1,\ldots, F_s$ are $(n-1)$-dimensional simple polytopes which are $(n-1)$-colorable. So by repeating the above argument, we can
   show that $\mathfrak{B}_1(P^n) \subset \mathfrak{B}_2(P^n)$ and so on. Now it remains to show
   $\mathfrak{B}_{n-1}(P^n) = \mathcal{V}_{2r}$. \vskip .1cm

   By definition, $\mathfrak{B}_{n-1}(P^n)$ is spanned by $\{ \xi_f \, |\, f\ \text{is an edge (or $1$-face) of}\ P^n \}$. So it is obvious that $\mathfrak{B}_{n-1}(P^n) \subset \mathcal{V}_{2r}$.
  Let $\{ v_1,\ldots, v_{2r} \}$ be all the vertices of $P^n$. It is easy to see that
  $\mathcal{V}_{2r}$ is spanned by $\{ \xi_{v_i} + \xi_{v_j} \,|\, 1\leq i\neq j \leq 2r\}$.
  Then since there exists an edge path on $P^n$ between any two vertices $v_i$ and $v_j$ of
  $P^n$, $\xi_{v_i} + \xi_{v_j}$ belongs to $\mathfrak{B}_{n-1}(P^n)$. So
  $\mathcal{V}_{2r}\subset \mathfrak{B}_{n-1}(P^n)$. This finishes the proof.
    \end{proof}
 \vskip .1cm

 Later we will prove that
 the condition in Lemma~\ref{Lem:Inclusion} is also sufficient for an $n$-dimensional
 simple polytope
 to be $n$-colorable (see Proposition~\ref{collection}).\vskip .1cm

 \begin{thm} \label{Thm:Main-1}
  Let $\pi: M^n\rightarrow P^n$ be an $n$-dimensional small cover which admits a regular $\mathrm{m}$-involution.
  For any $0\leq k \leq n$, the space $V^{M}_k$ coincides with $\mathfrak{B}_k(P^n)$.
 \end{thm}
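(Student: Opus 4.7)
The plan is to prove both inclusions $\mathfrak{B}_k(P^n)\subseteq V^M_k$ and $V^M_k\subseteq \mathfrak{B}_k(P^n)$, leveraging the Stanley--Reisner description (R3) of $\Z_2^n$-equivariant cohomology of $M^n$ and the fact that, by the argument proving Theorem~\ref{max-involution}, $M^{G_\tau}=M^{\Z_2^n}$ consists exactly of the $2r$ preimages of the vertices of $P^n$.

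For the first inclusion I would compute the $G_\tau$-equivariant localization of the generators $a_{F_i}$ from (R3). Topologically $a_{F_i}\in H^1_{\Z_2^n}(M^n;\Bbb F_2)$ is the equivariant Poincar\'e dual of the facial submanifold $\pi^{-1}(F_i)$, so its restriction to a fixed point $\pi^{-1}(v)$ equals the dual class $\lambda(F_i)^{\ast}\in H^1(B\Z_2^n;\Bbb F_2)$ when $v\in F_i$ and is zero otherwise. By Theorem~\ref{max-involution}(c), $\mathrm{Im}(\lambda)=\{e_1,\ldots,e_n\}$ is a basis of $\Z_2^n$ with $G_\tau=\langle e_1+\cdots+e_n\rangle$, so the restriction $H^*(B\Z_2^n;\Bbb F_2)\to H^*(BG_\tau;\Bbb F_2)$ sends every $\lambda(F_i)^{\ast}$ to the generator $t$ of $\Bbb F_2[t]$. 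Hence the $G_\tau$-equivariant localization of $a_{F_i}$ in $\Bbb F_2^{2r}\otimes\Bbb F_2[t]$ equals $\xi_{F_i}\otimes t$. Using~\eqref{Equ:Product-Face} together with $\xi_{F_i}\circ\xi_{F_i}=\xi_{F_i}$, the monomial $a_{F_{i_1}}\cdots a_{F_{i_k}}$ attached to a codimension-$k$ face $f=F_{i_1}\cap\cdots\cap F_{i_k}$ localizes to $\xi_f\otimes t^k$, so $\xi_f\in V^M_k$ and hence $\mathfrak{B}_k(P^n)\subseteq V^M_k$.

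For the reverse inclusion I would argue that the natural restriction homomorphism $\rho:H^*_{\Z_2^n}(M^n;\Bbb F_2)\to H^*_{G_\tau}(M^n;\Bbb F_2)$ is surjective. Both actions are equivariantly formal (by (R3) and by Theorem~\ref{ring}(b), respectively), so the standard change-of-rings identification from~\cite{ap} gives $H^*_{G_\tau}(M^n;\Bbb F_2)\cong H^*_{\Z_2^n}(M^n;\Bbb F_2)\otimes_{H^*(B\Z_2^n)}H^*(BG_\tau)$; since $H^*(B\Z_2^n)\to H^*(BG_\tau)$ is surjective (each $t_i\mapsto t$), so is $\rho$. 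Combined with (R3), every degree-$k$ class in $H^*_{G_\tau}(M^n;\Bbb F_2)$ is an $\Bbb F_2$-linear combination of images of monomials $a_{F_{i_1}}^{d_1}\cdots a_{F_{i_s}}^{d_s}$ with $\sum d_j=k$ and $F_{i_1}\cap\cdots\cap F_{i_s}\neq\varnothing$ (the other monomials vanish by $\mathcal{I}_{P^n}$). Each such monomial localizes to $\xi_{f'}\otimes t^k$ where $f'=F_{i_1}\cap\cdots\cap F_{i_s}$ is a face of codimension $s\leq k$, and by Lemma~\ref{Lem:Inclusion} we have $\xi_{f'}\in\mathfrak{B}_s(P^n)\subseteq\mathfrak{B}_k(P^n)$. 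This yields $V^M_k\subseteq\mathfrak{B}_k(P^n)$.

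The step I expect to be the main obstacle is verifying the surjectivity of $\rho$: it rests on the change-of-rings isomorphism for equivariantly formal actions, which is standard for tori but must be cited or checked with care for finite $2$-groups (Eilenberg--Moore degeneration when $H^*_{\Z_2^n}(M^n;\Bbb F_2)$ is free over $H^*(B\Z_2^n;\Bbb F_2)$). If this formality argument proves awkward, a backup strategy is a pure dimension count using $\dim V^M_k=\sum_{j=0}^k h_j(P^n)$ from Theorem~\ref{ring} and (R1); combined with the first inclusion, the task reduces to bounding $\dim\mathfrak{B}_k(P^n)$ from below by this partial $h$-vector sum, which can be approached by induction on $k$ via Lemma~\ref{Lem:Inclusion} but would ultimately recover the same combinatorial content.
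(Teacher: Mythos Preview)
Your proposal is correct and follows essentially the same approach as the paper: both hinge on the computation that $a_F$ localizes to $\xi_F\otimes t$ (the paper's Lemma~\ref{Lem:Local}) and on the surjectivity of the restriction $H^*_{\Z_2^n}(M^n;\Bbb F_2)\to H^*_{G_\tau}(M^n;\Bbb F_2)$ (the paper's Claim-2, proved exactly as you suggest via equivariant formality and surjectivity of $H^*(B\Z_2^n)\to H^*(BG_\tau)$). The only organizational difference is that the paper first reduces to $k=1$ by observing $V^M_k=(V^M_1)^{\circ k}$ and $\mathfrak{B}_k(P^n)=\mathfrak{B}_1(P^n)^{\circ k}$, whereas you treat general $k$ directly and invoke Lemma~\ref{Lem:Inclusion} to absorb the lower-codimension terms; both routes are equivalent.
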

  \vskip .1cm

{\begin{cor}\label{col-self}
Let $P^n$ be an $n$-colorable simple $n$-polytope with $2r$ vertices. Then
  $$\dim_{{\Bbb F}_2}\mathfrak{B}_k(P^n)=
  \sum_{i=0}^k h_i(P^n), \  0\leq k\leq n.$$
 If $n$ is odd, then
$\mathfrak{B}_k(P^n)$ is a self-dual code in $\mathbb{F}^{2r}_2$
if and only if $k=\frac{n-1}{2}$.
If $n$ is even, $\mathfrak{B}_k(P^n)$ cannot be a self-dual code in $\mathbb{F}^{2r}_2$ for
any $0\leq k \leq n$.
\end{cor}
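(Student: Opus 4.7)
The plan is to realize $\mathfrak{B}_k(P^n)$ as the spaces $V^M_k$ for a judiciously chosen small cover, and then pipe everything through the machinery already assembled. Since $P^n$ is $n$-colorable, I would fix a facet coloring $c:\mathcal{F}(P^n)\to \{1,\ldots,n\}$ and define $\lambda:\mathcal{F}(P^n)\to \mathbb{Z}_2^n$ by $\lambda(F)=e_{c(F)}$, where $\{e_1,\ldots,e_n\}$ is the standard basis. The adjacency condition of the coloring ensures that the $n$ facets meeting at any vertex are sent to $n$ linearly independent basis vectors, so $\lambda$ is a valid characteristic function and defines a small cover $\pi:M^n\to P^n$ whose image $\mathrm{Im}(\lambda)=\{e_1,\ldots,e_n\}$ is a basis. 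By the implication (c)$\Rightarrow$(a) of Theorem~\ref{max-involution}, $M^n$ carries the regular $\mathrm{m}$-involution $\tau_{e_1+\cdots+e_n}$.

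With this small cover in hand, the dimension formula is essentially a translation. Theorem~\ref{Thm:Main-1} gives $V^M_k=\mathfrak{B}_k(P^n)$ for all $0\le k\le n$; Theorem~\ref{ring} gives $\dim_{\mathbb{F}_2} V^M_k=\sum_{j=0}^k b_j(M^n;\mathbb{F}_2)$; and property (R1) of small covers converts Betti numbers into $h$-vector entries, yielding $\dim_{\mathbb{F}_2}\mathfrak{B}_k(P^n)=\sum_{i=0}^k h_i(P^n)$.

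For the self-dual statement, I would transport Proposition~\ref{suff-nece} through the identification $V^M_k=\mathfrak{B}_k(P^n)$: self-duality of $V^M_k$ holds precisely when $n$ is odd and $k=(n-1)/2$, so the same is true of $\mathfrak{B}_k(P^n)$. As a sanity check, one can argue the self-dual condition purely combinatorially: by Corollary~\ref{gen-self-dual}, $\mathfrak{B}_k(P^n)$ is self-dual iff $\sum_{i=0}^k h_i(P^n)=r=\frac{1}{2}\sum_{i=0}^n h_i(P^n)$. Using the Dehn--Sommerville relations $h_i(P^n)=h_{n-i}(P^n)$ together with the strict positivity $h_i(P^n)>0$, this equality holds iff $k$ and $n-1-k$ determine the same partial sum, i.e.\ iff $n=2k+1$; when $n$ is even, the middle entry $h_{n/2}(P^n)$ obstructs the balance for every $k$, ruling out self-duality in that case.

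Since all the substantive work has been absorbed into Theorem~\ref{max-involution}, Theorem~\ref{Thm:Main-1}, Theorem~\ref{ring}, and Proposition~\ref{suff-nece}, no genuine obstacle remains; the corollary is a bookkeeping repackaging of those topological results in purely combinatorial language. The one point that deserves explicit mention is the legitimacy of the assignment $\lambda(F)=e_{c(F)}$, which is precisely what an $n$-coloring supplies and which brings us into the scope of Theorem~\ref{max-involution}.
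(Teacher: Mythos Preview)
Your proposal is correct and follows essentially the same route as the paper: construct a small cover over $P^n$ with $\mathrm{Im}(\lambda)$ a basis of $\mathbb{Z}_2^n$ (which the $n$-colorability supplies), identify $\mathfrak{B}_k(P^n)$ with $V^M_k$ via Theorem~\ref{Thm:Main-1}, and then invoke Theorem~\ref{ring} and Proposition~\ref{suff-nece}. You are slightly more explicit than the paper in spelling out the construction of $\lambda$ and adding the Dehn--Sommerville sanity check, but the argument is the same.
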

\begin{proof}
Let $M^n$ be a small cover over $P^n$ whose characteristic function
 $\lambda: \mathcal{F}(P^n)\rightarrow \Z_2^n$ satisfies:
the image $\mathrm{Im} (\lambda)$  is a basis $\{e_1,\ldots,e_n\}$ in $\Z_2^n$.
Then by Theorem~\ref{Thm:Main-1}, $\mathfrak{B}_k(P^n)$ coincides with
$V^M_k$. So this corollary follows from Theorem~\ref{ring} and
Proposition~\ref{suff-nece}.
 \end{proof}

 \begin{cor}  \label{Cor:Main-2}
    Let $\pi: M^n\rightarrow P^n$ be an $n$-dimensional small cover which admits a regular $\mathrm{m}$-involution where
      $n$ is odd. Then the self-dual binary code $C_{M^n} = V^{M}_{\frac{n-1}{2}} =
    \mathfrak{B}_{\frac{n-1}{2}}(P^n)$ is spanned by
    $\{ \xi_{f}\,;\, f\  \text{is any face of $P^n$ with}\  \dim(f) = \frac{n+1}{2} \}$.
    So the minimum distance of $C_{M^n}$ is less or equal to
     $\mathrm{min}\{ \# (\text{vertices of}\ f)  \, ;\, f \  \text{is
  a}\ \frac{n+1}{2}   \text{-dimensional face of}\ P^n\}$.
 \end{cor}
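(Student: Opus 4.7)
The plan is to assemble this corollary as a direct synthesis of Proposition~\ref{suff-nece}, Theorem~\ref{Thm:Main-1}, and the definition~\eqref{Equ:Def-Bk} of $\mathfrak{B}_k(P^n)$, followed by reading off a weight bound from the spanning set. Since $n$ is assumed odd, Proposition~\ref{suff-nece} singles out $k=\tfrac{n-1}{2}$ as the unique index for which $V^M_k$ is self-dual, giving the first equality $C_{M^n}=V^M_{(n-1)/2}$. Then Theorem~\ref{Thm:Main-1} identifies $V^M_k$ with $\mathfrak{B}_k(P^n)$ for every $0\le k\le n$, and specializing to $k=\tfrac{n-1}{2}$ yields the second equality $V^M_{(n-1)/2}=\mathfrak{B}_{(n-1)/2}(P^n)$.

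Next, I would translate codimension into dimension: a face $f\subset P^n$ has $\codim(f)=\tfrac{n-1}{2}$ precisely when $\dim(f)=n-\tfrac{n-1}{2}=\tfrac{n+1}{2}$. Substituting this into~\eqref{Equ:Def-Bk} shows that $\mathfrak{B}_{(n-1)/2}(P^n)$ is spanned over $\mathbb{F}_2$ by the set $\{\xi_f : f \text{ is a face of } P^n \text{ with } \dim(f)=\tfrac{n+1}{2}\}$, which is exactly the generating set claimed.

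For the minimum distance estimate, I would invoke the standard fact that the minimum distance of a linear binary code equals the minimum Hamming weight of its nonzero codewords, so it is bounded above by the minimum weight appearing in any generating set. By the definition of $\xi_f$ as the characteristic vector (in $\mathbb{F}_2^{2r}$) of the vertex set of $f$, we have $wt(\xi_f)=\#(\text{vertices of }f)$, and each $\xi_f$ is nonzero since every face of $P^n$ contains at least one vertex. Taking the minimum over all $f$ of dimension $\tfrac{n+1}{2}$ yields the stated upper bound.

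There is essentially no obstacle beyond careful bookkeeping: the only nontrivial inputs (self-duality at index $\tfrac{n-1}{2}$ and the identification $V^M_k=\mathfrak{B}_k(P^n)$) have already been established, and the weight bound follows from inspecting the definition of $\xi_f$. The proof is a short assembly rather than a new argument.
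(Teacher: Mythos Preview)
Your proposal is correct and matches the paper's approach: the paper presents this corollary without a separate proof, as it follows immediately from Proposition~\ref{suff-nece}, Theorem~\ref{Thm:Main-1}, and the definition~\eqref{Equ:Def-Bk}, exactly as you outlined. The distance bound is likewise read off from the Hamming weights $wt(\xi_f)=|V(f)|$ of the generators.
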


 \noindent \textbf{Problem 1:} For any $n$-dimensional small cover $M^n$
 which admits a regular $\mathrm{m}$-involution where $n$ is odd, determine
 the minimum distance of the self-dual binary code $C_{M^n}$. \vskip .2cm

 We will see in Proposition~\ref{prop:3-polytope} that
 when $n=3$, the minimum distance of
 $C_{M^n}$ is always equal to $4$. For higher dimensions,
 it seems to us that the minimum distance of $C_{M^n}$ should be equal to
 $\mathrm{min}\{ \# (\text{vertices of}\ f)  \, ;\, f \ \text{is
  a}\ \frac{n+1}{2}\text{-dimensional face of}\ P^n\}$. But the proof is not clear to us.

 \vskip .2cm

 In the following, we are going to prove Theorem~\ref{Thm:Main-1}.
  For brevity, let
   $$\tau=\tau_{e_1+\cdots+e_n}, \ \ \
   G_{\tau}=\langle e_1+\cdots+e_n \rangle\cong \Z_2 \subset \Z_2^n.$$
    By the construction of $M^n$,
  all the fixed points of $\tau$ on $M^n$ are
  $\tilde{v}_1,\ldots, \tilde{v}_{2r}$ where
   $$\tilde{v}_i=\pi^{-1}(v_i) \in M^n, \ i=1,\ldots, 2r.$$
 \vskip .2cm

\subsection{Proof of Theorem~\ref{Thm:Main-1}} \ \vskip .1cm

  According to the result in (R4) of section 3.1,
  the cohomology ring $H^*(M^n; {\Bbb F}_2)$ of $M^n$
   is generated as an algebra by $H^1(M^n;{\Bbb F}_2)$.
  So as an algebra over $H^*(BG_\tau; {\Bbb F}_2)={\Bbb F}_2[t]$,
   the equivariant cohomology ring
   $H^*_{G_\tau}(M^n; {\Bbb F}_2)=H^*(M^n;{\Bbb F}_2)\otimes H^*(BG_\tau; {\Bbb F}_2)$ is
   generated by elements of degree $1$. In addition, 
   the operation $\circ$ on ${\Bbb F}^{2r}_2$ corresponds to the cup product in $H^*_{G_\tau}(M^n;{\Bbb F}_2)$. So
  we obtain from Theorem~\ref{ring} that for any $1\leq k \leq n$,
   $V^M_k =  \underset{k}{\underbrace{V^M_1\circ\cdots \circ V^M_1}}$.
   On the other hand, there is a similar structure on $\mathfrak{B}_k(P^n)$ as well.
   \vskip .1cm

 \textbf{Claim-1:}
   $\mathfrak{B}_k(P^n) =  \underset{k}{\underbrace{\mathfrak{B}_1(P^n)
   \circ\cdots\circ \mathfrak{B}_1(P^n)}}$, $1\leq k \leq n$.

   Indeed, for any $k$ different facets $F_{i_1},\ldots, F_{i_k}$ of $P^n$,
   their intersection $F_{i_1}\cap\cdots\cap F_{i_k}$
   is either empty or a face of codimension $k$. So by~\eqref{Equ:Product-Face}, we have
   $\xi_{F_{i_1}}\circ\cdots\circ \xi_{F_{i_k}} = \xi_{F_{i_1}\cap\cdots\cap F_{i_k}} \in \mathfrak{B}_{k}(P^n)$.
   If there are repetitions of facets in $F_{i_1},\ldots, F_{i_k}$, we have
   $\xi_{F_{i_1}}\circ\cdots\circ \xi_{F_{i_k}}
    \in \mathfrak{B}_{l}(P^n)$ for some $l<k$ (because $x\circ x =x$ for any
    $x\in {\Bbb F}_2^{2r}$).
    But since
    $P^n$ is $n$-colorable in our case, we have
    $\mathfrak{B}_{l}(P^n) \subset \mathfrak{B}_{k}(P^n)$ by
    Lemma~\ref{Lem:Inclusion}.
   Conversely,
    any codimension-$k$ face $f$ of $P^n$ can be written as
 $f=F_{i_1}\cap\cdots\cap F_{i_k}$ where $F_{i_1}, \ldots, F_{i_k}$ are $k$
 different facets of $P^n$.
  So
  $\xi_f=\xi_{F_{i_1}\cap\cdots\cap F_{i_k}}=\xi_{F_{i_1}}\circ\cdots\circ \xi_{F_{i_k}}$.
  The Claim-1 is proved.\vskip .1cm

  So to prove Theorem~\ref{Thm:Main-1}, it is sufficient to prove that
   $V^M_1 = \mathfrak{B}_1(P^n)$, i.e., $V^M_1$ is spanned by the set
   $\{ \xi_F\,;\, F\  \text{is any facet of $P^n$} \}$.
   Next, we examine the localization of $H^1_{\Z_2^n}(M^n; \mathbb{F}_2)$ to
 $H^1_{\Z_2^n}(M^{\Z_2^n}; \mathbb{F}_2)$ more carefully.\vskip .1cm

  Let $\mathcal{F}(P^n)=\{F_1, \cdots, F_m\}$ be the set of all facets of $P^n$.
  By our previous notations,
   the regular involution $\tau = \tau_{e_1+\cdots+e_n}$ on $M^n$ only has isolated fixed points:
    $$M^{G_{\tau}}=M^{\Z_2^n} =\{ \tilde{v}_1,\ldots, \tilde{v}_{2r} \}.$$

 Clearly the inclusion $G_\tau\hookrightarrow \Z_2^n$ induces the diagonal maps
 $\Delta_E: EG_\tau\longrightarrow E\Z_2^n$ and $\Delta_B: BG_\tau\longrightarrow B\Z_2^n$ such that the following diagram commutes
 $$\CD
  EG_\tau  @>\Delta_E >> E\Z_2^n \\
  @V  VV @V  VV  \\
  BG_\tau  @>\Delta_B >> B\Z_2^n.
\endCD$$
Since $M^{G_\tau}=M^{\Z_2^n}$ consists of isolated points,
 we have a commutative diagram
$$\xymatrix{
  EG_\tau\times M^{G_\tau} \ar[rrr]^{\Delta_E\times\text{id}}
  \ar[dr]^{i_1} \ar[ddd]_{} & & &
  E\Z_2^n\times M^{\Z_2^n} \ar[dl]_{i_2} \ar[ddd]^{ }\\
%%%%%%%%%%
  & EG_\tau\times M^n \ar[r]^{\Delta_E\times \text{id} }
  \ar[d]_{}
      & E\Z_2^n\times M^n \ar[d]^{ } &      \\
%%%%%%%%%%%%%%
  & EG_\tau\times_{G_\tau} M^n \ar[r]^{\phi }&
    E\Z_2^n\times_{\Z_2^n} M^n &  \\
%%%%%%%%%%%%%
  EG_\tau\times_{G_\tau} M^{G_\tau}\ar[ur]^{i_3} \ar[rrr]_{\Delta_B\times\text{id}=\psi} & &  &
  E\Z_2^n\times_{\Z_2^n} M^{\Z_2^n}\ar[ul]_{i_4}        }$$
where  $\phi$ is the map
induced by $\Delta_E\times \text{id}$, and $i_1, i_2, i_3, i_4$ are all inclusions. Furthermore, we have the following commutative diagram where $i_3^*\circ\phi^*=\psi^*\circ i_4^*$.
\begin{equation}\label{graph}
\xymatrix{
  H^*_{\Z_2^n}(M^n;{\Bbb F}_2) \ar[r]^{\  \phi^*}\ar[d]_{i_4^*} &
  H^*_{G_\tau}(M^n; {\Bbb F}_2)\ar[d]^{i_3^*}\\
  H^*_{\Z_2^n}(M^{\Z_2^n};{\Bbb F}_2) \ar[r]_{\ \psi^*} &
  H^*_{G_\tau}(M^{G_\tau};{\Bbb F}_2)
}
\end{equation}
  Note that $i_3^*$ and $i_4^*$ are injective, and
 $$H^*_{\Z_2^n}(M^{\Z_2^n};{\Bbb F}_2)
\cong \bigoplus_{v\in V(P^n)}H^*_{\Z_2^n}(\tilde{v};{\Bbb F}_2),\ \
 H^*_{G_\tau}(M^{G_\tau};{\Bbb F}_2)
\cong \bigoplus_{ v\in V(P^n)} H^*_{G_\tau}(\tilde{v};{\Bbb F}_2),$$
where $\tilde{v}=\pi^{-1}(v)$ is the fixed point corresponding to a vertex $v\in V(P^n)$.
Then by the fact that
 $H^*_{\Z_2^n}(\tilde{v};{\Bbb F}_2)\cong H^*(B\Z_2^n;{\Bbb F}_2)$ and $H^*_{G_\tau}(\tilde{v};{\Bbb F}_2)\cong H^*(BG_\tau;{\Bbb F}_2)$, we
 can regard $\psi^*$ as a direct sum:
  \begin{equation} \label{Equ:Psi}
     \psi^* = \bigoplus_{v\in V(P^n)}\Delta_B^*.
  \end{equation}
We know that $H^*(B\Z_2^n;{\Bbb F}_2)={\Bbb F}_2[t_1, \ldots, t_n]$ with $\deg t_i=1$, and $H^*(BG_\tau;{\Bbb F}_2)={\Bbb F}_2[t]$ with $\deg t=1$. For each $1\leq i \leq n$, let
$G_i=\langle e_i\rangle\cong \Z_2 \subset \Z^n_2$.
 It is clear that
 $$
  H^*(BG_i; {\Bbb F}_2)={\Bbb F}_2[t_i],\, 1\leq i \leq n;\ \
  \Z^n_2 = G_1\times \cdots \times G_n.  $$
 For any $1\leq i \leq n$, let $\zeta_i: G_i\rightarrow G_{\tau}$ be the
  group isomorphism sending $e_i\rightarrow e_1+\cdots +e_n  $, and
 let $\rho_i: \Z^n_2\rightarrow G_i$ be the projection sending $e_j$ to $0$ for any
  $1\leq j\neq i \leq n$. Let $\theta: G_{\tau} \hookrightarrow \Z_2^n$
   be the inclusion map.
  It is clear that
   $$ \rho_i\circ \theta \circ \zeta_i =\mathrm{id}_{G_i}, \ 1\leq i \leq n.$$

  Let $B_{\zeta_i}: BG_i\rightarrow BG_{\tau}$ and $B_{\rho_i}:B\Z^n_2\rightarrow BG_i$
    be the maps
   induced by $\zeta_i$ and $\rho_i$ between the classifying spaces, respectively.
    Then since
  there is a functorial construction of classifying spaces of groups (see~\cite{milnor}),
   we can assume
    $B_{\rho_i}\circ\Delta_B\circ B_{\zeta_i} = \mathrm{id}_{BG_i}$
     (recall that $\Delta_B: BG_\tau\rightarrow B\Z_2^n$ is induced by $\theta$).
     So for any $1\leq i \leq n$, we have
  $$  \mathrm{id}^*_{BG_i} =  B^*_{\zeta_i}\circ \Delta^*_B \circ B^*_{\rho_i}:
     H^1(BG_i;{\Bbb F}_2)\rightarrow
      H^1(B\Z^n_2;{\Bbb F}_2) \rightarrow H^1(BG_{\tau};{\Bbb F}_2)
   \rightarrow H^1(BG_i;{\Bbb F}_2).$$
   Obviously, we have $B^*_{\rho_i}(t_i)=t_i$
    for any $1\leq i \leq n$.
    In addition, we can assert $B^*_{\zeta_i}(t)=t_i$ since $B^*_{\zeta_i}$ is
   an isomorphism, and $t$ and $t_i$ are the unique generators
   of $H^1(BG_{\tau};{\Bbb F}_2)$ and $H^1(BG_i;{\Bbb F}_2)$, respectively.
   Then $t_i=B^*_{\zeta_i}\circ \Delta^*_B \circ B^*_{\rho_i}(t_i) =
   B^*_{\zeta_i}\circ \Delta^*_B(t_i)$ implies
      \begin{equation} \label{Equ:t-ti}
         \Delta_B^*(t_i)=t, \ 1\leq i \leq n.
      \end{equation}
       \vskip .1cm

    Our strategy here is to understand the image of the localization map $i^*_3$ in terms of
       $\psi^*$ and $i_4^*$. So we need to show that $\phi^*$ is surjective. \vskip .1cm

  \textbf{Claim-2:} The homomorphism $\phi^*$ is surjective.\vskip .1cm

    Indeed, according to~\cite[Theorem 4.12]{dj}, the $E_2$-term of the Serre spectral sequence
    of the fibration $E\Z_2^n \times_{\Z_2^n} M^n \rightarrow B\Z^n_2$ collapses and we have
    $$H^*_{\Z_2^n}(M^n;{\Bbb F}_2)\cong  H^*(M^n;{\Bbb F}_2)\otimes H^*(B\Z_2^n;{\Bbb F}_2).$$
    It means that the small cover $M^n$ is \emph{equivariantly formal} (see~\cite{gkm} for the definition).
     Meanwhile, we already know that
    $H^*_{G_\tau}(M^n; {\Bbb F}_2)=H^*(M^n;{\Bbb F}_2)\otimes H^*(BG_\tau; {\Bbb F}_2)$. So
    the surjectivity of $\phi^*$ follows from the surjectivity of
    $\Delta_B^*: H^*(B\Z_2^n;{\Bbb F}_2)\rightarrow H^*(BG_\tau; {\Bbb F}_2)$
    which is implied by~\eqref{Equ:t-ti}. The Claim-2 is proved.\vskip .1cm

   \begin{rem} The surjectivity
of restriction map to the equivariant cohomology with respect to a
subgroup is known for many equivariant formal situations. For example, an explicit
statement of the surjectivity result in case of real torus actions is contained
in~\cite[Theorem 5.7]{amp}.
 \end{rem}

   By the Claim-2, the image of the localization
   $ i^*_3: H^*_{G_\tau}(M^n; {\Bbb F}_2) \rightarrow
    H^*_{G_\tau}(M^{G_\tau};{\Bbb F}_2)$ is
   \begin{equation}\label{Equ:Ident}
    \mathrm{Im}(i^*_3) = \mathrm{Im}(i^*_3\circ \phi^*)= \mathrm{Im}(\psi^*\circ i_4^*).
  \end{equation}

 For any fixed point $\tilde{v}\in M^{\Z_2^n}$, the inclusion $i_{\tilde{v}}:\{\tilde{v}\}\hookrightarrow M$ induces a homomorphism
  $$i_{\tilde{v}}^*: H^*_{\Z_2^n}(M;{\Bbb F}_2)\cong {\Bbb F}_2[a_{F_1}, \ldots, a_{F_m}]/I\longrightarrow H^*_{\Z_2^n}(\{\tilde{v}\};{\Bbb F}_2) \cong H^*(B\Z_2^n;{\Bbb F}_2)=
  {\Bbb F}_2[t_1, \ldots, t_n].$$
 Then we can write
  \begin{equation} \label{Equ:i4}
    i_4^*=\bigoplus_{v\in V(P^n)}i^*_{\tilde{v}}.
  \end{equation}

  Since we have already known how to compute $\psi^*$ from~\eqref{Equ:Psi} and~\eqref{Equ:t-ti},
    it remains to understand each $i^*_{\tilde{v}}$ for us to
    compute $\mathrm{Im}(i^*_3)$. This is given in the following lemma.
   \vskip .1cm

\begin{lem} \label{Lem:Local}
 Let $\lambda$ be the characteristic function of the small cover $M^n$
 so that $\mathrm{Im}(\lambda) =\{e_1,\ldots, e_n\}$ is a basis of $\Z_2^n$.
Suppose $F$ is a facet of $P^n$ with $\lambda(F) = e_{j}$ for some $1\leq j\leq n$.
 Then for any vertex $v$ of $P^n$, the fixed point $\tilde{v} = \pi^{-1}(v) \in M^{\Z_2^n}$  satisfies:
$$i_{\tilde{v}}^*(a_F)= \left\{\begin{array}{ll}
   t_{j}, &\text{ if } v \in F  ;\\
    0,           &\text{ if } v\notin F.
                              \end{array}
\right.
$$

\end{lem}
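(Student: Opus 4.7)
My plan is to prove the lemma by identifying each generator $a_F$ of the Stanley--Reisner presentation~\eqref{Equ:Equiv-Cohomology} with the equivariant Poincar\'e dual of the facial submanifold $M_F := \pi^{-1}(F) \subset M^n$; equivalently, with the first equivariant Stiefel--Whitney class (mod-$2$ Thom class) of the normal line bundle to $M_F$. This geometric interpretation is part of the content of the Davis--Januszkiewicz description~\cite{dj}. Once this is granted, $i^*_{\tilde{v}}(a_F)$ is simply the restriction of this equivariant Thom class to the fixed point $\tilde{v}$, which can be read off from the local model of the $\Z_2^n$-action near $\tilde{v}$.

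If $v \notin F$, then $\tilde{v}$ lies outside $M_F$, so one may pull the Thom class back through the inclusion $\{\tilde{v}\} \hookrightarrow M^n \setminus M_F$; since the Thom class vanishes off $M_F$, we obtain $i^*_{\tilde{v}}(a_F) = 0$. Suppose instead that $v \in F$, and let $F_{i_1}, \ldots, F_{i_n}$ be the $n$ facets meeting at $v$ with $F = F_{i_s}$ for some $s$. Because $\mathrm{Im}(\lambda) = \{e_1, \ldots, e_n\}$ is a basis of $\Z_2^n$, the values $\lambda(F_{i_1}), \ldots, \lambda(F_{i_n})$ are a permutation of $e_1, \ldots, e_n$, with $\lambda(F_{i_s}) = e_j$. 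Local standardness of the action yields an equivariant diffeomorphism from a neighborhood of $\tilde{v}$ onto $\R^n$ on which $\Z_2^n$ acts by coordinate reflections through the basis $\lambda(F_{i_1}), \ldots, \lambda(F_{i_n})$; the $k$-th coordinate hyperplane models $\pi^{-1}(F_{i_k})$. The character carried by the normal line to the $s$-th hyperplane is therefore the element of $\mathrm{Hom}(\Z_2^n, {\Bbb F}_2) = H^1(B\Z_2^n; {\Bbb F}_2)$ dual to $\lambda(F) = e_j$, which is $t_j$. Consequently $i^*_{\tilde{v}}(a_F) = t_j$, as required.

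The main obstacle is the first step: rigorously pinning down the identification of $a_F$ with the equivariant Thom class of $M_F$, since~\eqref{Equ:Equiv-Cohomology} is stated algebraically and does not by itself name a geometric representative. I would handle this either by quoting the geometric interpretation in~\cite{dj}, or by verifying directly that the equivariant Thom classes of the $M_F$ satisfy the Stanley--Reisner relations and have the correct degree-$1$ count; equivariant formality of $M^n$ then forces them to coincide with the $a_F$ up to relabeling. After that, the normal-bundle computation at $\tilde{v}$ is automatic, and the only bookkeeping is to keep the identification between $e_j \in \Z_2^n$ and $t_j \in H^1(B\Z_2^n;{\Bbb F}_2)$ consistent throughout.
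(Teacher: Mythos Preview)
Your proposal is correct and shares the paper's starting point: both identify $a_F$ with $w_1(L_F)$ for a line bundle $L_F$ associated to the normal bundle of $M_F$ (the paper cites~\cite[Section~6.1]{dj} for this), and both dispatch the case $v \notin F$ by noting that $L_F$ is trivial away from $(M_F)_{\Z_2^n}$. For $v \in F$ the two arguments diverge. You read off the weight of the normal line directly from the local standard model at $\tilde{v}$, obtaining the character dual to $e_j$, namely $t_j$. The paper instead argues algebraically: it uses the identity $i_{\tilde{v}}^* \circ \mathfrak{p}_{\tilde{v}}^* = \mathrm{id}$ together with the formula $\mathfrak{p}_{\tilde{v}}^*(t_j) = \sum_{\lambda(F_l) = e_j} a_{F_l}$ (the transpose of the characteristic matrix, from~\cite[p.~438--439]{dj}), and then isolates the single surviving term using the already-established vanishing for facets not containing $v$ plus the fact that exactly one facet through $v$ has color $e_j$. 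Your local computation is shorter and more geometric; the paper's route avoids unpacking the local model explicitly, relying only on the global structure of $p^*: H^*(B\Z_2^n) \to H^*_{\Z_2^n}(M^n)$.
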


\begin{proof}
 Let $M_F=\pi^{-1}(F)$ and let $M_{\Z_2^n} = E\Z_2^n \times_{\Z_2^n} M^n$ and
 $(M_F)_{\Z_2^n} = E\Z_2^n \times_{\Z_2^n} M_F$ be the Borel constructions of
  $M^n$ and $M_F$, respectively.
  According to the discussion in~\cite[Section 6.1]{dj}, $a_F$ is the first Stiefel-Whitney class
  $w_1(L_F)$ of a line bundle $L_F$ over $M_{\Z_2^n}$. Moreover,
     the restriction of $L_{F}$ to $M_{\Z_2^n} - (M_{F})_{\Z_2^n}$ is a trivial line bundle.\vskip .1cm

For any fixed point $\tilde{v}$ of $M^n$, let $L_{\tilde{v}}$ denote the restriction of
the line bundle $L_F$ to the Borel construction $\{\tilde{v}\}_{\Z_2^n}=
 E\Z_2^n \times_{\Z_2^n} \{ \tilde{v} \}$.
  \vskip .1cm

  If a vertex $v \notin F$, so $\tilde{v} \notin M_F$ and then $L_{\tilde{v}}$
  is a trivial line bundle over $\{\tilde{v}\}_{\Z_2^n}$. So we have
     $$0=w_1(L_{\tilde{v}}) = i_{\tilde{v}}^*(w_1(L_F))= i_{\tilde{v}}^*(a_F).$$

 For any vertex $v\in F$, let $\mathfrak{p}_{\tilde{v}}: M^n \rightarrow \{\tilde{v}\}$ be
 the constant map.
It is clear that $\mathfrak{p}_{\tilde{v}} \circ i_{\tilde{v}} =
    id_{\{\tilde{v}\}}$. The induced maps $\mathfrak{p}^*_{\tilde{v}}$ and
    $i_{\tilde{v}}^*$ on the equivariant cohomology give
\begin{equation} \label{Equ:Composite}
  \xymatrix{\text{id}: \
  H^1_{\Z_2^n}(\{ \tilde{v} \})\ar[r]^{\mathfrak{p}^*_{\tilde{v}}} \ar@{=}[d] &
  H^1_{\Z_2^n}(M^n)\ar[r]^{i_{\tilde{v}}^*} \ar@{=}[d]&
  H^1_{\Z_2^n}(\{ \tilde{v} \})\ar@{=}[d]\\
 \text{id}: \  \text{span}\{t_1,\ldots,t_n\}\ar[r]^{\mathfrak{p}^*_{\tilde{v}}} &
 \text{span}\{a_{F_1},\ldots, a_{F_m}\}\ar[r]^{\ \; i_{\tilde{v}}^*} &
  \text{span}\{t_1,\ldots,t_n\} }
 \end{equation}

Let $\lambda$ be the characteristic function
 of the small cover $\pi: M^n \rightarrow P^n$. We can regard
 $\lambda$ as a linear map $\lambda:\Z_2^m=\text{span}\{F_1,\ldots,F_m\}
 \rightarrow \Z_2^n=\text{span}\{e_1,\ldots,e_n\}$, which is
 represented by an $n\times m$ matrix $A=(\lambda(F_1),\ldots,\lambda(F_m))$.
  Since $\tilde{v}$ is a fixed point of the $\Z^n_2$-action on $M^n$, we can identify the map
 $\mathfrak{p}^*_{\tilde{v}}$ in~\eqref{Equ:Composite}
  with $p^*: H^1(B\Z^n_2)\rightarrow H^1_{\Z^n_2}(M^n)$ where
 $p: E\Z_2^n \times_{\Z_2^n} M^n\rightarrow B\Z^n_2 $ is the projection.
   Then by the analysis of $p^*$ in \cite[p.438-439]{dj}, we have
 \begin{equation}
   \mathfrak{p}^*_{\tilde{v}}(t_j)=\lambda^*(t_j)=\sum_{\lambda(F_l)=e_j}a_{F_l},
 \end{equation}
 where
 $\lambda^*:\text{span}\{t_1,\ldots,t_n\} \rightarrow \text{span}\{a_{F_1},\ldots,a_{F_m}\}$ is the dual of $\lambda$, which is represented by the transpose $A^t$ of $A$. So we obtain
\begin{equation} \label{Equ:t_j}
   t_j=i_{\tilde{v}}^*(\mathfrak{p}^*_{\tilde{v}}(t_j))=i_{\tilde{v}}^*
   \Big(\sum_{\lambda(F_l)=e_j} a_{F_l}\Big)=\sum_{\lambda(F_l)=e_j} i_{\tilde{v}}^*(a_{F_l}) =
    \sum_{v\in F_l, \, \lambda(F_l)=e_j} i_{\tilde{v}}^*(a_{F_l}).
 \end{equation}
 Observe that among all the $n$ facets of $P^n$ containing $v$, there is only
 one facet (i.e. $F$) colored by $e_j$. So we obtain from~\eqref{Equ:t_j} that
  $$  \sum_{v\in F_l, \, \lambda(F_l)=e_j}
     i_{\tilde{v}}^*(a_{F_l}) = i_{\tilde{v}}^*(a_{F})=t_j.$$
 The lemma is proved.
 \end{proof}

\vskip .1cm

  Now for an arbitrary
   facet $F$ of $P^n$, suppose $\lambda(F) = e_j$. We get from Lemma~\ref{Lem:Local} that
\begin{equation} \label{Equ:i4}
i_4^*(a_F)=\bigoplus_{v\in V(P^n)} i_{\tilde{v}}^*(a_F)= \sum_{v\in F} t_j\cdot\xi_{v} =
  t_{j}\cdot \xi_F.
\end{equation}
Recall that $\xi_v$ denotes the vector in $\mathbb{F}_2^{2r}=\mathbb{F}_2^{|V(P^n)|}$
 with $1$ at the coordinate
corresponding to the vertex $v$ and zero everywhere else.
 Combining~\eqref{Equ:i4} with~\eqref{Equ:Psi} and~\eqref{Equ:t-ti}, we obtain
 \begin{equation}\label{f}
    \psi^* i_4^*(a_F) =t\cdot\xi_F.
  \end{equation}
  So $\psi^* i_4^*(H^1_{\Z_2^n}(M;{\Bbb F}_2))= t\cdot\mathfrak{B}_1(P^n)$ since $\psi^*$ and
   $i_4^*$ are graded ring homomorphisms. Then by~\eqref{Equ:Ident}, we have
   $\mathrm{Im}(i^*_3)=
  \mathrm{Im}(\psi^*\circ i_4^*)=t\cdot\mathfrak{B}_1(P^n)$. This
  implies that $V^M_1=\mathfrak{B}_1(P^n)$.
  So we complete the proof of Theorem~\ref{Thm:Main-1}. \hfill \qed

\vskip .6cm

\section{Binary codes from general simple polytopes}

  The definition of $\mathfrak{B}_k(P^n)$ in~\eqref{Equ:Def-Bk} clearly
  makes sense for an arbitrary $n$-dimensional simple polytope $P^n$.
 We call $\mathfrak{B}_k(P^n) \subset {\Bbb F}_2^{|V(P^n)|}$
  the {\em codimension-$k$ face code of $P^n$}.
 It is obvious that $\mathfrak{B}_0(P^n)=\{\underline{0}, \underline{1}\}\cong {\Bbb F}_2$, and $\mathfrak{B}_n(P^n)\cong {\Bbb F}_2^{|V(P^n)|}$ where
  $$ \underline{0}= (0,\ldots, 0),\ \  \underline{1}=(1,\ldots, 1).$$
 In this section, we study some properties of $\mathfrak{B}_k(P^n)$.
 The arguments in this section
   are completely combinatorial and are independent from the discussion of equivariant 
   cohomology and small covers in the previous sections.

\vskip .1cm

For each $0\leq k\leq n$, $\mathfrak{B}_k(P^n)$ determines a matrix $M_k(P^n)$ with columns
$\xi_f\in \mathfrak{B}_k(P^n)$ with respect to an ordering of all the vertices and
 all the codimension-$k$ faces $f$ of $P^n$.
 We call $M_k(P^n)$ the {\em code matrix of codimension-$k$ faces} of $P^n$.
\vskip .1cm

\begin{exam}
   Under the labeling of the vertices of the $6$-prism $Q^3$ in Figure~\ref{6prism},
the code matrix $M_1(Q^3)$ is a $12\times 8$ binary matrix shown in Figure~\ref{6prism}.
The codewords corresponding to the top facet $F_1$ and the bottom facet $F_2$ are the first and the second columns of $M_1(Q^3)$.
\end{exam}
\begin{figure}[h]
\includegraphics[width=0.5\textwidth]{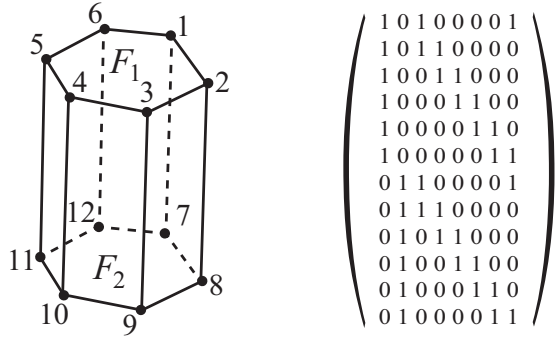}
\caption{A $6$-prism and its code matrix of codimension-one faces}
\label{6prism}
\end{figure}

\vskip .1cm

\begin{prop}
 For any $n$-dimensional simple polytope $P^n$, we have
  $$ \mathfrak{B}_{n-1}(P^n) =\{ u \in \mathbb{F}^{|V(P^n)|}_2 \,|\,
   wt(u) \ \text{is even} \}.$$ 
   So $\dim_{\mathbb{F}_2}\mathfrak{B}_{n-1}(P^n)= |V(P^n)|-1$. 
\end{prop}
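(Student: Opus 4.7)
The plan is to reproduce, in this more general setting, the second half of the argument already used in Lemma~\ref{Lem:Inclusion}, which actually never invoked $n$-colorability. By definition $\mathfrak{B}_{n-1}(P^n)$ is spanned by $\{\xi_e \mid e \text{ is an edge of } P^n\}$, and every edge of a polytope has exactly two vertices, so each generator $\xi_e$ has Hamming weight $2$. Hence $\mathfrak{B}_{n-1}(P^n)$ is contained in the even-weight subspace $\mathcal{V}_{|V(P^n)|}$.

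For the reverse inclusion, I would note that $\mathcal{V}_{|V(P^n)|}$ is spanned as an $\mathbb{F}_2$-vector space by the pairwise sums $\xi_{v_i}+\xi_{v_j}$ with $v_i\neq v_j$, so it suffices to show each such sum lies in $\mathfrak{B}_{n-1}(P^n)$. By a classical result (e.g.\ Balinski's theorem, or more simply the fact that the $1$-skeleton of any convex polytope is connected), there is an edge path $v_i = u_0, u_1, \ldots, u_s = v_j$ in $P^n$. Writing $e_l$ for the edge with vertices $u_{l-1}, u_l$, we have $\xi_{e_l} = \xi_{u_{l-1}} + \xi_{u_l}$, so a telescoping sum mod $2$ gives
\[
\xi_{v_i} + \xi_{v_j} \;=\; \sum_{l=1}^{s}\xi_{e_l} \;\in\; \mathfrak{B}_{n-1}(P^n).
\]
Therefore $\mathcal{V}_{|V(P^n)|}\subset \mathfrak{B}_{n-1}(P^n)$, giving equality, and the dimension count $\dim_{\mathbb{F}_2}\mathcal{V}_{|V(P^n)|}=|V(P^n)|-1$ is immediate from the definition of $\mathcal{V}_{|V(P^n)|}$ in~\eqref{Equ:Big-V}.

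There is essentially no obstacle: the only input needed beyond the definition is connectivity of the edge graph of $P^n$, which holds for every convex polytope and in particular for every simple polytope. No appeal to small covers, equivariant cohomology, or $n$-colorability is required, which is consistent with the remark that the arguments of this section are purely combinatorial.
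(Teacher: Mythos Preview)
Your proof is correct and essentially identical to the paper's own argument: both show the forward inclusion by noting that each edge vector has Hamming weight $2$, and the reverse inclusion by using connectivity of the $1$-skeleton to write $\xi_{v}+\xi_{v'}$ as a telescoping sum of edge vectors, then observing that these pairwise sums span the even-weight subspace. Your remark that this is just the second half of the proof of Lemma~\ref{Lem:Inclusion} (which never used $n$-colorability) is exactly right.
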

  \begin{proof}
      It is clear that for any $1$-face $f$ of $P^n$, its Hamming weight $wt(\xi_f)=2$. So 
      $\mathfrak{B}_{n-1}(P^n)$ is a subspace of $\{u \in \mathbb{F}^{|V(P^n)|}_2 \,|\,
   wt(u) \ \text{is even} \}$. On the other hand, for
    two arbitrary vertices $v,v'\in V(P^n)$, since the $1$-skeleton of $P^n$ is connected,
   there exists a path of $1$-faces $f_1,\cdots, f_k$ on $P^n$ connecting $v$ and $v'$.
   So we have
   $$\xi_{v} + \xi_{v'} = \xi_{f_1}+\cdots +\xi_{f_k} \in \mathfrak{B}_{n-1}(P^n).$$
   Then our lemma follows from the fact that $\{ u \in \mathbb{F}^{|V(P^n)|}_2 \,|\,
   wt(u) \ \text{is even} \}$ is linearly spanned by $\{ \xi_{v} + \xi_{v'} \,|\,
   v,v'\in V(P^n)\}$.
  \end{proof}

 In the following proposition, 
  we obtain a lower bound of the dimension of $\mathfrak{B}_k(P^n)$.

\begin{prop}\label{Prop:dim-bound}
For any $n$-dimensional simple polytope $P^n$, we have
 $$\dim_{\mathbb{F}_2}\mathfrak{B}_k(P^n)\geq h_0(P^n)+\cdots+h_k(P^n), \ 0\leq k\leq n.$$
\end{prop}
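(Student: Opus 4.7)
The plan is to prove the bound via the Morse-theoretic interpretation of the $h$-vector. First, I choose a generic linear functional $\varphi:\mathbb{R}^n\to\mathbb{R}$ that takes pairwise distinct values on $V(P^n)$. For each vertex $v\in V(P^n)$, let $\iota(v)$ denote the number of edges at $v$ along which $\varphi$ strictly decreases as one moves away from $v$ (``descending edges''). The classical shelling argument of Bruggesser--Mani and McMullen then gives
$$h_i(P^n)=\big|\{v\in V(P^n):\iota(v)=i\}\big|,\qquad 0\leq i\leq n,$$
so $\sum_{i=0}^{k}h_i(P^n)$ equals the number of vertices with $\iota(v)\leq k$. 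My strategy is to exhibit, for each such vertex $v$, one explicit codimension-$k$ face $f_v$, and then to show that the vectors $\xi_{f_v}$ form a linearly independent set in $\mathfrak{B}_k(P^n)$.

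The second step is the construction of $f_v$. At each vertex $v$ of the simple polytope $P^n$, the $n$ edges at $v$ are in canonical bijection with the $n$ facets through $v$: each edge $e$ corresponds to the unique facet $F_e$ through $v$ that does not contain $e$. For a codimension-$k$ face $f=F_{i_1}\cap\cdots\cap F_{i_k}$ containing $v$, an edge $e$ at $v$ lies in $f$ if and only if $F_e\notin\{F_{i_1},\dots,F_{i_k}\}$; hence $v$ is a local $\varphi$-minimum of $f$ precisely when the $\iota(v)$ ``descending facets'' $\{F_e:e\text{ descending at }v\}$ are contained in $\{F_{i_1},\dots,F_{i_k}\}$. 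When $\iota(v)\leq k$, I can supplement these descending facets by any $k-\iota(v)$ of the remaining facets through $v$ to form a codimension-$k$ face $f_v$ containing $v$, and by genericity of $\varphi$ the local $\varphi$-minimum $v$ is in fact the unique global minimum of $\varphi$ on $f_v$ (since a linear functional that is distinct at the vertices of a convex polytope achieves its minimum at a unique vertex, and that vertex is characterized as the unique local edge-minimum).

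Finally, I order the vertices so that $\varphi(v_1)<\cdots<\varphi(v_N)$ and list those with $\iota(v)\leq k$ as $v_{i_1},\dots,v_{i_s}$ (with $i_1<\cdots<i_s$ and $s=\sum_{i=0}^{k}h_i(P^n)$). Consider the $s\times s$ matrix over $\mathbb{F}_2$ whose $(a,b)$-entry is the $v_{i_a}$-coordinate of $\xi_{f_{v_{i_b}}}$. The diagonal entries are $1$ because $v_{i_b}\in f_{v_{i_b}}$; for $a<b$, one has $\varphi(v_{i_a})<\varphi(v_{i_b})$, and since $v_{i_b}$ is the unique $\varphi$-minimum of $f_{v_{i_b}}$, the vertex $v_{i_a}$ cannot lie in $f_{v_{i_b}}$, so the entry is $0$. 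Thus the matrix is lower-triangular with unit diagonal, hence invertible, and the $s$ vectors $\xi_{f_{v_{i_a}}}$ are linearly independent in $\mathfrak{B}_k(P^n)$, yielding the desired lower bound.

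The only nontrivial ingredient is the identity $h_i(P^n)=|\{v:\iota(v)=i\}|$, which is standard shelling theory for simple polytopes (see for instance Ziegler's \emph{Lectures on Polytopes}); everything else is routine bookkeeping of ascending/descending edges and a straightforward triangularity argument, so I do not anticipate a serious obstacle.
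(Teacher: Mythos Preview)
Your proof is correct and follows essentially the same approach as the paper's: a generic height function, the Morse-theoretic identification of $h_i$ with the count of vertices of index $i$, the construction of a codimension-$k$ face having each index-$\le k$ vertex as its unique minimum, and then linear independence via the observation that the minimum vertex of each chosen face lies in no other chosen face with higher minimum. The only cosmetic difference is that you phrase the independence step as lower-triangularity of an $s\times s$ submatrix, whereas the paper argues by contradiction using the vertex of smallest height in a putative relation; these are the same idea.
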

\begin{proof}
 Using the Morse-theoretical argument in~\cite{br}, we can define a generic height function $\phi$ on $P^n$ that makes the $1$-skeleton of $P^n$ into a directed graph by
    orienting each edge so that $\phi$ increases along it.
    Then for any face $f$ of $P^n$ with dimension greater than $0$, $\phi|_f$ assumes
    its maximum (or minimun) at a vertex. Since $\phi$ is generic,
    each face $f$ of $P^n$ has a unique ``top'' and a unique ``bottom'' vertex.
   For each vertex $v$ of $P^n$, we define the \emph{index} $\mathrm{ind}(v)$ 
    of $v$ to be the
   number of incident edges of $P^n$ that point towards $v$.
  A simple argument (see~\cite[p.115]{br} or~\cite[p.13]{bp}) shows that for any
  $0 \leq j \leq n$, the number of vertices of $P^n$ with index $j$ is equal to $h_j(P^n)$. \vskip .1cm

 Now fix an integer $0\leq k\leq n$. For
    any vertex $v$ of $P^n$ with $0\leq \mathrm{ind}(v) \leq k$, there are
   exactly $n-\mathrm{ind}(v)$ incident edges of $P^n$ that point away from $v$.
   So there are $\binom{n-\mathrm{ind}(v)}{n-k}$ codimension-$k$ faces of $P^n$ that
   are incident to $v$ and take $v$ as their (unique) ``bottom'' vertex.
    Choose an arbitrary one such face at $v$, denoted by $f^{n-k}_v$.
   
   \vskip .1cm

 \noindent {\bf Claim:} $\{ \xi_{f^{n-k}_v} \,|\, 0\leq \mathrm{ind}(v) \leq k,\, v\in V(P^n) \}$ is a linearly independent subset of $\mathfrak{B}_k(P^n)$.
 \vskip .1cm

 Otherwise there would exist vertices $v_1,\ldots, v_s$ of $P^n$ with
 $0\leq \mathrm{ind}(v_i) \leq k$, $1\leq i \leq s$, so that
  $\xi_{f^{n-k}_{v_1}} + \cdots + \xi_{f^{n-k}_{v_s}} =0$.
  Without loss of generality, we can assume $\phi(v_1) < \cdots < \phi(v_s)$.
 Then among $f^{n-k}_{v_1},\ldots, f^{n-k}_{v_s}$, only $f^{n-k}_{v_1}$ is
  incident to the vertex $v_1$. From this fact, we obtain
  $\xi_{v_1}\circ \big(\xi_{f^{n-k}_{v_1}} + \cdots + \xi_{f^{n-k}_{v_s}}\big) = \xi_{v_1}\circ \xi_{f^{n-k}_{v_1}} = \xi_{v_1} = 0$, which is absurd.
  \vskip .1cm

This claim implies that $\dim_{\mathbb{F}_2} \mathfrak{B}_k(P^n)$ is greater or equal to the number of vertices of $P^n$ whose indices are less or equal to $k$.
Hence $\dim_{\mathbb{F}_2}\mathfrak{B}_k(P^n)\geq h_0(P^n)+\cdots+h_k(P^n)$.
\end{proof}

 \begin{rem}
 Suppose $P^n$ is an $n$-colorable simple
  $n$-polytope. Then the dimension of 
  $\mathfrak{B}_k(P^n)$ is exactly $h_0(P^n)+\cdots+h_k(P^n)$ by corollary~\ref{col-self}.
  So by the claim in the proof of Proposition~\ref{Prop:dim-bound}, 
 $\{ \xi_{f^{n-k}_v} \,|\, 0\leq \mathrm{ind}(v) \leq k,\, v\in V(P^n) \}$
   is actually a linear basis for $\mathfrak{B}_k(P^n)$.
   This gives us an interesting way to write a linear basis of
   $\mathfrak{B}_{k}(P^n)$ from a generic height function on $P^n$.
   In particular when $n$ is odd, we can obtain a linear basis of the self-dual binary code
  $\mathfrak{B}_{\frac{n-1}{2}}(P^n)$ in this way.  
  \end{rem}

  \begin{cor}\label{Cor:dim}
  Let $P^n$ be an $n$-diemensional simple polytope with $m$ facets. Then
     $$\dim_{{\Bbb F}_2} \mathfrak{B}_1(P^n)\geq m-n+1.$$
  Moreover, for any vertex $v$ of $P^n$, let $F_v$ be an arbitrary facet of $P^n$ 
  containing $v$ and $F_1,\ldots, F_{m-n}$ be all the facets of $P^n$ not containing $v$.
  Then $\xi_{F_v},\xi_{F_{1}}, \cdots, \xi_{F_{m-n}}\in \mathfrak{B}_1(P^n)$ 
  are linearly independent.
\end{cor}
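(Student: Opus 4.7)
The plan is to derive both parts as direct applications of Proposition~\ref{Prop:dim-bound} with $k=1$. For the first part, recall the standard formula $h_1(P^n) = m - n$ for simple polytopes, together with $h_0(P^n) = 1$. Thus Proposition~\ref{Prop:dim-bound} immediately yields $\dim_{\mathbb{F}_2}\mathfrak{B}_1(P^n) \geq h_0(P^n) + h_1(P^n) = m - n + 1$.

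For the second part, I will specialize the generic height function used in the proof of Proposition~\ref{Prop:dim-bound} so that the vectors $\xi_{f^{n-1}_w}$ produced there are exactly $\xi_{F_v},\xi_{F_1},\ldots,\xi_{F_{m-n}}$. Pick a linear function $\phi$ on $\mathbb{R}^n$, restricted to $P^n$, that is generic and has $v$ as its unique minimum; such a $\phi$ exists since one can choose $\phi$ to pair positively with every edge direction of $P^n$ emanating from $v$, forcing $v$ to be a local (hence global, by convexity) minimum. Under this $\phi$, the unique index-$0$ vertex is $v$, all other vertices have index $\geq 1$, and in the construction of Proposition~\ref{Prop:dim-bound} we are free to take $f^{n-1}_v := F_v$.

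The key observation is that the assignment $w \mapsto f^{n-1}_w$ is a bijection between the set of vertices of index $1$ and the set of facets of $P^n$ not containing $v$. Given $w$ with $\mathrm{ind}(w) = 1$, its unique facet $f^{n-1}_w$ spanned by the $n-1$ up-edges at $w$ has $w$ as bottom vertex; since $\phi(v) < \phi(w)$, this facet cannot contain $v$. Conversely, given a facet $F'$ with $v \notin F'$, let $w'$ be its (unique, by genericity) bottom vertex under $\phi$. The $n-1$ edges of $F'$ incident to $w'$ all go up from $w'$; if the remaining edge at $w'$ (the one leaving $F'$) also went up, then $\mathrm{ind}(w')=0$, forcing $w' = v$, contradicting $v \notin F'$. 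Hence this edge goes down, $\mathrm{ind}(w') = 1$, and $F' = f^{n-1}_{w'}$. The count $h_1(P^n) = m - n$ matches the number of facets not containing $v$, confirming the bijection.

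With this bijection the collection $\{f^{n-1}_w \,|\, 0 \leq \mathrm{ind}(w) \leq 1\}$ coincides with $\{F_v, F_1, \ldots, F_{m-n}\}$, and the claim established inside the proof of Proposition~\ref{Prop:dim-bound} asserts precisely that this set is linearly independent in $\mathfrak{B}_1(P^n)$. The main obstacle is verifying the bijection, and in particular the arithmetic that the bottom vertex of every facet not containing $v$ has index exactly $1$; everything else is bookkeeping on top of Proposition~\ref{Prop:dim-bound}.
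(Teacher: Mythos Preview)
Your proof is correct and follows essentially the same approach as the paper: both derive the dimension bound from Proposition~\ref{Prop:dim-bound} with $k=1$ via $h_0=1$, $h_1=m-n$, and both specialize the height function so that $v$ is the unique index-$0$ vertex, identifying the index-$1$ vertices with the bottom vertices of the facets not containing $v$. You spell out the bijection between index-$1$ vertices and facets missing $v$ more carefully than the paper (which simply asserts it is ``easy to see''), but the argument is the same.
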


\begin{proof} 
  Since $h_0(P^n)=1$, $h_1(P^n)=m-n$, Proposition~\ref{Prop:dim-bound} tells us that 
  $$\dim_{{\Bbb F}_2} \mathfrak{B}_1(P^n)\geq h_0(P^n) + h_1(P^n) = 
  m-n+1.$$
  For any vertex $v$ of $P^n$, we can define a height function $\phi$ as in the
  proof of Proposition~\ref{Prop:dim-bound} so that $v$ is the unique ``bottom'' 
  vertex of $P^n$ relative to $\phi$. Then $v$ is the only vertex of index $0$.
  For each $1\leq i \leq n$, let $v_i$ be the bottom vertex of $F_i$ relative to $\phi$.
  Then it is easy to see that 
   $v_1,\cdots, v_{m-n}$ are exactly all the vertices of index $1$ relative to $\phi$.
   So by the claim in the
  proof of Proposition~\ref{Prop:dim-bound} for $k=1$, $\xi_{F_v}, 
  \xi_{F_{1}}, \cdots, \xi_{F_{m-n}}$ are linearly independent in $\mathfrak{B}_1(P^n)$.
\end{proof}

Next let us look at what happens when $\dim_{{\Bbb F}_2} \mathfrak{B}_1(P^n)= m-n+1$.
\begin{prop}\label{Prop:dim1}
Let $P^n$ be an $n$-dimensional simple polytope with $m$ facets. Then $\dim_{{\Bbb F}_2} \mathfrak{B}_1(P^n)= m-n+1$ if and only if $P^n$ is $n$-colorable.
\end{prop}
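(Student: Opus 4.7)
The forward implication is immediate from Corollary~\ref{col-self}: for an $n$-colorable $P^n$, $\dim_{\mathbb{F}_2}\mathfrak{B}_1(P^n) = h_0(P^n) + h_1(P^n) = 1 + (m-n) = m-n+1$. My plan for the converse (for $n \geq 3$) is to analyze the kernel $K$ of the surjection $\Phi: \mathbb{F}_2^{\mathcal{F}(P^n)} \to \mathfrak{B}_1(P^n)$, $F \mapsto \xi_F$, and extract an $n$-coloring of the facets directly from $K$. The hypothesis gives $\dim_{\mathbb{F}_2} K = m - (m-n+1) = n-1$.

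For each vertex $v$, write $\mathcal{F}_v\subset\mathcal{F}(P^n)$ for the $n$ facets meeting at $v$, and let $\pi_v:\mathbb{F}_2^{\mathcal{F}(P^n)}\to \mathbb{F}_2^{\mathcal{F}_v}$ be the projection. The condition $\Phi(r)=0$ contains, as its $v$-coordinate, the equation $\sum_{F\ni v} r_F=0$, so $\pi_v(K)$ lies in the even-weight subspace $E_v\subset\mathbb{F}_2^{\mathcal{F}_v}$ of dimension $n-1$. Corollary~\ref{Cor:dim} guarantees that the vectors $\xi_F$ for the $m-n$ facets not containing $v$ are linearly independent, so no nonzero element of $K$ can be supported off $\mathcal{F}_v$; thus $\pi_v|_K$ is injective and, by dimension count, an isomorphism $K \xrightarrow{\cong} E_v$.

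Now for each facet $F$ define $\alpha_F\in K^{*}$ by $\alpha_F(r)=r_F$; at any vertex $v\in F$, $\alpha_F$ corresponds via $\pi_v$ to the $F$-coordinate functional on $E_v$. For $n\geq 3$ the $n$ coordinate functionals on the $(n-1)$-dimensional space $E_v$ are pairwise distinct (given distinct $i,j$, pick $k\neq i,j$; then $e_i+e_k\in E_v$ separates the $i$-th and $j$-th coordinates), so the \emph{palette} $P_v:=\{\alpha_F:F\in\mathcal{F}_v\}$ has exactly $n$ elements. Moreover, the defining equation of $E_v$ gives the relation $\sum_{F\in\mathcal{F}_v}\alpha_F=0$ in $K^{*}$ for every $v$. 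For an edge $vv'$ with $\mathcal{F}_v=\{F_1,\dots,F_{n-1},F_n\}$ and $\mathcal{F}_{v'}=\{F_1,\dots,F_{n-1},F_n'\}$, subtracting the relations at $v$ and $v'$ yields $\alpha_{F_n}=\alpha_{F_n'}$; hence $P_v=P_{v'}$ along every edge.

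Since the $1$-skeleton of $P^n$ is connected, $P_v$ is a common $n$-element set $P\subset K^{*}$, and the map $F\mapsto \alpha_F\in P$ is an $n$-coloring: any two distinct facets meeting in a codimension-$2$ face share a vertex, and by the pairwise distinctness at each vertex they receive different colors. Hence $P^n$ is $n$-colorable. The crux of the argument is the interplay between the isomorphism $\pi_v|_K:K\xrightarrow{\cong}E_v$ and the relation $\sum_{F\in\mathcal{F}_v}\alpha_F=0$, which together force the local $n$-palette to propagate faithfully across every edge; the one caveat is the hypothesis $n\geq 3$, needed so that the coordinate functionals on the $(n-1)$-dimensional space $E_v$ are pairwise distinct (the statement genuinely fails for $n=2$, e.g.\ for a triangle).
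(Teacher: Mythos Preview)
Your proof is correct, but both directions take a different path from the paper's.

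For the forward direction, you invoke Corollary~\ref{col-self}, which rests on Theorem~\ref{Thm:Main-1} and the equivariant cohomology machinery of small covers. The paper instead gives a self-contained combinatorial argument: from an $n$-coloring one gets a partition $\mathcal{F}_1,\ldots,\mathcal{F}_n$ with $\sum_{F\in\mathcal{F}_i}\xi_F=\underline{1}$, and these relations, together with Corollary~\ref{Cor:dim}, pin down the dimension as $m-n+1$. Since Section~5 is advertised as independent of the earlier topological sections, you may want to swap in this elementary argument.

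For the converse, the approaches genuinely diverge. The paper argues by contradiction via Joswig's Theorem~\ref{j}: if $P^n$ is not $n$-colorable there is a $2$-face with an odd number of vertices, and an explicit $(2k{+}1)\times(2k{+}1)$ determinant over $\mathbb{F}_2$ then exhibits $m-n+2$ linearly independent facet vectors. Your route is constructive and avoids Theorem~\ref{j} altogether: you read the coloring directly out of the $(n-1)$-dimensional kernel $K$ of $\Phi$, using the local isomorphisms $\pi_v:K\xrightarrow{\cong}E_v$ (granted by Corollary~\ref{Cor:dim}) to define colors $\alpha_F\in K^{*}$ and the vertex relation $\sum_{F\ni v}\alpha_F=0$ to propagate the palette along edges. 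This is more conceptual and transparently exhibits \emph{why} the dimension hypothesis forces a coloring; the paper's argument, by contrast, localizes the obstruction to a single odd $2$-face. Your observation that the converse needs $n\geq 3$ (and fails for a triangle) is correct and worth flagging, since the paper's statement is silent on this and its proof also tacitly uses $n\geq 3$ when appealing to Theorem~\ref{j}.
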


\begin{proof}
Let $\{ F_1, \ldots, F_m\}$ be all the facets of $P^n$.
Suppose $P^n$ is $n$-colorable. Then $P^n$ admits a coloring $\lambda: \mathcal{F}(P^n)\rightarrow \Z_2^n$ such that
the image $\mathrm{Im} \lambda$  is a basis  $\{e_1,\ldots,e_n\}$ in $\Z_2^n$.
Set $$\mathcal{F}_i=\{F\in \mathcal{F}(P^n)\,|\, \lambda(F)=e_i \}, i=1,2,\ldots,n.$$
 By the definition of $\lambda$,
  each vertex of $P^n$ is incident to exactly one facet in $\mathcal{F}_i$. So we have
\begin{equation}\label{eq}
\bigcup_{F\in \mathcal{F}_i} V(F)=V(P^n), \ \ \sum_{F\in \mathcal{F}_i}\xi_F
   =\sum_{v\in V(P^n)}\xi_v=\underline{1}.
\end{equation}

Without loss of generality, assume that the facets $F_1, \ldots, F_n$ meet at a vertex, and $\lambda(F_i)=e_i$, $1\leq i \leq n$. 
 So by our definition, $F_i\in \mathcal{F}_i$, $1\leq i \leq n$.
 We claim that
 for each $1\leq i\leq n-1$, $\xi_{F_i}$ can be written as a linear combination of elements in 
 $\xi_{F_n}, \xi_{F_{n+1}}, \ldots, \xi_{F_m}$.
  Indeed, it follows from~\eqref{eq} that
 \begin{equation} \label{Equ:F-eq}
   \sum\limits_{F\in \mathcal{F}_i} \xi_{F} + \sum\limits_{F\in \mathcal{F}_n} \xi_{F}=\underline{1}+\underline{1}=\underline{0}.
   \end{equation}
Observe that
$$ \{\xi_F| F\in\mathcal{F}_i\}\subset \{\xi_{F_i},\xi_{F_n},\xi_{F_{n+1}},\ldots
\xi_{F_m}\},\ \ \{\xi_F| F\in\mathcal{F}_n\}\subset \{\xi_{F_i},\xi_{F_n},\xi_{F_{n+1}},\ldots
\xi_{F_m}\}.$$
 So~\eqref{Equ:F-eq} implies that $\xi_{F_i}$ is equal to
  a linear combination of elements in $\xi_{F_n}, \xi_{F_{n+1}}, \ldots, \xi_{F_m}$.
 Moreover, we know from Corollary~\ref{Cor:dim} that
$\xi_{F_n}, \xi_{F_{n+1}}, \ldots, \xi_{F_m}$ are linearly independent, hence form a basis of $\mathfrak{B}_1(P^n)$.
 So $\dim_{{\Bbb F}_2} \mathfrak{B}_1(P^n)= m-n+1$.

\vskip .2cm
Conversely, suppose $\dim_{{\Bbb F}_2} \mathfrak{B}_1(P^n)=m-n+1$.
If $P^n$ is not $n$-colorable, by Theorem~\ref{j} there exists a $2$-face $f^2$ of $P^n$ which
has odd number of vertices, say $v_1,\ldots,v_{2k+1}$. Without loss of generality, assume that  $f^2=F_1\cap\cdots\cap F_{n-2}$ and $v_1=F_1\cap F_2\cap\cdots\cap F_n$. By 
Corollary~\ref{Cor:dim}, $\{ \xi_{F_n},\xi_{F_{n+1}}, \ldots , \xi_{F_m} \}$ is
  a basis of $\mathfrak{B}_1(P^n)$.
Without loss of generality, we may assume the following (see Figure~\ref{p:Proof-1})
\begin{equation*}\label{2}
\begin{cases}
v_1=f^2\cap F_{n-1}\cap F_n\\
  v_2=f^2\cap F_n\cap F_{n+1}\\
 \quad \cdots\\
  v_i=f^2\cap F_{n+i-2}\cap F_{n+i-1}\\
  \quad \cdots\\
  v_{2k}=f^2\cap F_{n+2k-2} \cap F_{n+2k-1}\\
  v_{2k+1}=f^2\cap F_{n+2k-1} \cap F_{n-1}\\
\end{cases}
\end{equation*}

\begin{figure}[h]
\includegraphics[width=0.35\textwidth]{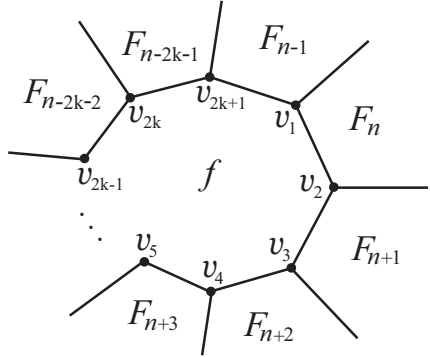}
\caption{A face $f$ with odd number of vertices}
\label{p:Proof-1}
\end{figure}

Assume that there exists $\epsilon_i\in {\Bbb F}_2$, $i=1, n, \ldots , m$ so that
\begin{equation}\label{2}
  \epsilon_1\xi_{F_1}+\epsilon_n\xi_{F_n}+\epsilon_{n+1}\xi_{F_{n+1}}+\cdots++\epsilon_m\xi_{F_m}
   =\underline{0}.
\end{equation}
 For each $i$, by taking the inner product with $\xi_{v_i}$
  on both sides of (\ref{2}),  we get
 \begin{equation}\label{3}
   \begin{cases}
  \epsilon_1 +\epsilon_n =0,\\
  \epsilon_1+\epsilon_n+\epsilon_{n+1} =0,\\
  \epsilon_1+\epsilon_{n+1}+\epsilon_{n+2} =0,\\
  \quad \cdots\\
  \epsilon_1+\epsilon_{n+i-2}+\epsilon_{n+i-1} =0,\\
  \quad \cdots\\
  \epsilon_1+\epsilon_{n+2k-2}+\epsilon_{n+2k-1} =0,\\
  \epsilon_1+\epsilon_{n+2k-1} =0.\\
   \end{cases}
    \end{equation}
The coefficient matrix of the above linear system is a $(2k+1)\times (2k+1)$ matrix
over ${\mathbb F}_2$.
 $$ \qquad\qquad
\begin{pmatrix}
1&1&0&0&0&\cdots&0&0&0\\
1&1&1&0&0&\cdots&0&0&0\\
1&0&1&1&0&\cdots&0&0&0\\
1&0&0&1&1&\cdots&0&0&0\\
&&&&\cdots\cdots\\
%1&0&0&0&0&\cdots&1&1&0&0\\
1&0&0&0&0&\cdots&1&1&0\\
1&0&0&0&0&\cdots&0&1&1\\
1&0&0&0&0&\cdots&0&0&1
\end{pmatrix}_{(2k+1)\times (2k+1)}$$
 It is easy to show that the determinant of this matrix is $1$. So
 the linear system~\eqref{3} only has zero solution, which implies that
  $\xi_{F_1},\xi_{F_n}, \ldots, \xi_{F_m}$ are linearly independent.
 Then we have $\dim_{{\Bbb F}_2} \mathfrak{B}_1(P^n)\geq m-n+2$. But this contradicts our
  assumption that $\dim_{{\Bbb F}_2} \mathfrak{B}_1(P^n)=m-n+1$. So the proposition is proved.
\end{proof}

 From the above discussion, we can derive several new criteria to judge whether
 a simple $n$-polytope $P^n$ is $n$-colorable in terms of
 the associated binary codes $\{ \mathfrak{B}_k(P^n)\}_{0\leq k \leq n}$.\vskip .1cm

\begin{prop}\label{collection}
Let $P^n$ be an $n$-dimensional simple polytope with $m$ facets.
Then the following statements are equivalent.
\begin{itemize}
 \item[(1)] $P^n$ is $n$-colorable. \vskip .1cm
 \item[(2)] There exists  a partition $\mathcal{F}_1, \ldots, \mathcal{F}_n$ of the set $\mathcal{F}(P^n)$ of all facets, such that for each $1\leq i\leq n$, all the
 facets in $\mathcal{F}_i$ are pairwise disjoint and $\sum_{F\in \mathcal{F}_i}\xi_F=\underline{1}$ (i.e.,
   each vertex of $P^n$ is incident to exactly one facet from every $\mathcal{F}_i$). \vskip .1cm
 \item[(3)] $\mathfrak{B}_0(P^n) \subset \mathfrak{B}_1(P^n) \subset\cdots\subset \mathfrak{B}_{n-1}(P^n)\subset \mathfrak{B}_n(P^n)\cong {\Bbb F}_2^{|V(P^n)|}.$ \vskip .1cm
 \item[(4)] $\mathfrak{B}_{n-2}(P^n)\subset \mathfrak{B}_{n-1}(P^n)$. \vskip .1cm
 \item[(5)] $\dim_{{\mathbb F}_2} \mathfrak{B}_1(P^n)= m-n+1$.
  \end{itemize}
\end{prop}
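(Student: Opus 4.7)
The plan is to prove the five equivalences by establishing $(1) \Leftrightarrow (2)$, the cycle $(1) \Rightarrow (3) \Rightarrow (4) \Rightarrow (1)$, and $(1) \Leftrightarrow (5)$; almost every step reduces to a result already proved in the paper.

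For $(1) \Leftrightarrow (2)$: an $n$-coloring $\mu$ of $\mathcal{F}(P^n)$ must assign pairwise distinct colors to the $n$ facets incident to any vertex of $P^n$, since $P^n$ is simple and any two such facets share a codimension-$2$ face. Setting $\mathcal{F}_i = \mu^{-1}(\text{color } i)$, any two facets in $\mathcal{F}_i$ must therefore be vertex-disjoint (hence pairwise disjoint as subsets of $P^n$), and each vertex of $P^n$ lies on exactly one facet of each color, which is precisely $\sum_{F \in \mathcal{F}_i} \xi_F = \underline{1}$. Conversely, labeling each facet in $\mathcal{F}_i$ with the color $i$ yields an $n$-coloring: two neighboring facets share a vertex, so by pairwise disjointness they belong to different classes $\mathcal{F}_i$.

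$(1) \Rightarrow (3)$ is exactly Lemma~\ref{Lem:Inclusion}, and $(3) \Rightarrow (4)$ is immediate. The one genuinely new step is $(4) \Rightarrow (1)$: by definition $\mathfrak{B}_{n-2}(P^n)$ is linearly spanned by $\{\xi_f : f \text{ is a } 2\text{-face of } P^n\}$, so condition $(4)$ forces $\xi_f \in \mathfrak{B}_{n-1}(P^n)$ for every $2$-face $f$. However, the proposition at the beginning of section~5 identifies $\mathfrak{B}_{n-1}(P^n)$, for an arbitrary simple $n$-polytope, with the even-weight subspace $\{u \in \mathbb{F}_2^{|V(P^n)|} : wt(u) \text{ is even}\}$. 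Hence every $2$-face of $P^n$ has an even number of vertices, and Theorem~\ref{j}(b) then delivers that $P^n$ is $n$-colorable.

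Finally, $(1) \Leftrightarrow (5)$ is precisely Proposition~\ref{Prop:dim1}. The whole argument is essentially bookkeeping once the parity-check description of $\mathfrak{B}_{n-1}(P^n)$ and Joswig's characterization Theorem~\ref{j} are in hand. I expect the only subtle point to be the $(4) \Rightarrow (1)$ direction, where one must translate containment in $\mathfrak{B}_{n-1}(P^n)$ into a parity condition on the vertex count of every $2$-face of $P^n$.
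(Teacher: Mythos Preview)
Your proof is correct and follows essentially the same route as the paper: the paper establishes the cycle $(1)\Rightarrow(2)\Rightarrow(3)\Rightarrow(4)\Rightarrow(1)$ (proving $(2)\Rightarrow(3)$ directly rather than citing Lemma~\ref{Lem:Inclusion}) and invokes Proposition~\ref{Prop:dim1} for $(1)\Leftrightarrow(5)$, with the key step $(4)\Rightarrow(1)$ handled exactly as you do. One small omission: Theorem~\ref{j} requires $n\geq 3$, so you should note (as the paper does) that the cases $n\leq 2$ are verified separately by inspection.
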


\begin{proof}
 It is easy to verify the above equivalences when $n\leq 2$. So we assume $n\geq 3$ below.
 In the proof of Proposition~\ref{Prop:dim1}, we have proved
  $(1)\Rightarrow (2)$ and $(1)\Leftrightarrow (5)$.

\vskip .2cm
Now we show that $(2)\Rightarrow (3)$. By the condition in (2), we clearly have
 $$\mathfrak{B}_0(P^n) \subset \mathfrak{B}_1(P^n), \ \
  \mathfrak{B}_{n-1}(P^n) \subset \mathfrak{B}_{n}(P^n).$$
It remains to show that $\mathfrak{B}_k(P^n) \subset \mathfrak{B}_{k+1}(P^n)$ for
each $1\leq k\leq n-2$. Let $f^{n-k}$ be a codimension-$k$ face of $P^n$.
Without the loss of generality, we assume that
   $$f^{n-k}=F_1\cap F_2\cap\cdots\cap F_k, \ \text{where}\
     F_i\in \mathcal{F}_i,\ i=1, \ldots , k.$$
For each $j=k+1,\ldots,n$, we have that
 $$\sum_{F\in \mathcal{F}_j}\xi_{F\cap f^{n-k}}=
   \sum_{F\in \mathcal{F}_j}\xi_{F}\circ\xi_{f^{n-k}}
  =\xi_{f^{n-k}}\circ(\sum_{F\in \mathcal{F}_j}\xi_{F} )
  =\xi_{f^{n-k}}\circ \underline{1}=\xi_{f^{n-k}}.$$
In the above equality, if $F\cap f^{n-k}=\emptyset$, then $\xi_{F\cap f^{n-k}}=\xi_\emptyset=\underline{0}$.
If $F\cap f^{n-k}\neq \emptyset$, then $F\cap f^{n-k}$ is a face of
  codimension $k+1$.
so $\xi_{F\cap f^{n-k}}\in \mathfrak{B}_{k+1}(P^n)$.
Thus we get $\xi_{f^{n-k}}=\sum_{F\in \mathcal{F}_j}\xi_{F\cap f^{n-k}} \in \mathfrak{B}_{k+1}(P^n)$.
This completes the proof of $(2)\Rightarrow (3)$.

\vskip .2cm

 It is trivial that $(3)\Rightarrow (4)$. Next we show $(4)\Rightarrow (1)$.
Assume $\mathfrak{B}_{n-2}(P^n)\subset \mathfrak{B}_{n-1}(P^n)$.
Notice that the number of nonzero coordinates in any vector in
 $\mathfrak{B}_{n-1}(P^n)$ must be even.
 So for any $2$-face $f^2$ of $P^n$, we have
  $\xi_{f^{2}}\in \mathfrak{B}_{n-2}(P^n)\subset \mathfrak{B}_{n-1}(P^n)$, which
  implies that $f^2$ has an even number vertices. Hence $P^n$ is $n$-colorable 
  by Theorem~\ref{j}.
\end{proof}

\vskip .6cm

\section{Self-dual binary codes from general
 simple polytopes}

In this section we discuss under what conditions can $\mathfrak{B}_k(P^n), 0\leq k\leq n$, be a self-dual code in $\mathbb{F}^{|V(P^n)|}_2$. It is clear that when the number of vertices 
$|V(P^n)|$ of $P^n$ is odd, $\mathfrak{B}_k(P^n)$ cannot be a self-dual code for any $k$. 

\begin{lem}\label{ineq}
Let $P^n$ be an $n$-dimensional simple polytope with $n\geq 3$.
Assume that $\mathfrak{B}_k(P^n)$ is a self-dual code. Then 
$\underline{1}\in \mathfrak{B}_k(P^n)$ and $0<2k<n$. 
\end{lem}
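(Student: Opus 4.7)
\medskip

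\noindent\emph{Proof plan.} The plan is to combine three ingredients: the elementary properties of self-dual codes recalled in Section~\ref{int}, the dimension lower bound of Proposition~\ref{Prop:dim-bound}, and the Dehn--Sommerville relations $h_i(P^n) = h_{n-i}(P^n)$ for simple polytopes. First I would derive $\underline{1} \in \mathfrak{B}_k(P^n)$ by a purely formal argument: since every codeword $u$ of a self-dual code satisfies $\langle u,u\rangle = 0$, the identity $\langle u,u\rangle = \sum_i u_i \pmod{2}$ from Section~\ref{int} forces $wt(u)$ to be even, which is precisely the statement $\langle \underline{1}, u\rangle = 0$. Hence $\underline{1}$ is orthogonal to every element of $\mathfrak{B}_k(P^n)$, so $\underline{1}\in \mathfrak{B}_k(P^n)^{\perp} = \mathfrak{B}_k(P^n)$.

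Next I would handle the bounds on $k$. To rule out $k = 0$, note that $\mathfrak{B}_0(P^n) = \{\underline{0},\underline{1}\}$ has dimension $1$, whereas self-duality requires $\dim_{{\Bbb F}_2}\mathfrak{B}_0(P^n) = |V(P^n)|/2 \geq (n+1)/2 \geq 2$, since any simple $n$-polytope with $n\geq 3$ has at least $n+1$ vertices. This contradiction yields $k\geq 1$, i.e.\ $2k>0$. For the upper bound $2k<n$, self-duality imposes
$$\dim_{{\Bbb F}_2}\mathfrak{B}_k(P^n) \;=\; \tfrac{1}{2}|V(P^n)| \;=\; \tfrac{1}{2}\sum_{i=0}^n h_i(P^n),$$
while Proposition~\ref{Prop:dim-bound} gives $\dim_{{\Bbb F}_2}\mathfrak{B}_k(P^n) \geq \sum_{i=0}^k h_i(P^n)$. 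Re-indexing via Dehn--Sommerville yields $\sum_{i=k+1}^n h_i(P^n) = \sum_{i=0}^{n-k-1} h_i(P^n)$, so the two facts combine into
$$\sum_{i=0}^k h_i(P^n) \;\leq\; \sum_{i=0}^{n-k-1} h_i(P^n).$$
Invoking the positivity $h_i(P^n)\geq 1$ for all $0 \leq i \leq n$---a classical consequence of Stanley's generalized lower bound theorem for simple polytopes---this inequality forces $k \leq n-k-1$, i.e.\ $2k<n$.

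The hard part of this approach is obtaining strictness in the last step when $2k\geq n$; this rests on the positivity of the middle entries of the $h$-vector of a simple polytope (without it one could only conclude $2k\leq n$). Everything else reduces to manipulating Dehn--Sommerville sums against the elementary lower bound of Proposition~\ref{Prop:dim-bound}, together with the defining property of self-dual codes.
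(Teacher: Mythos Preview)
Your proof is correct and essentially matches the paper's own argument. Two minor differences worth noting: (i) for $\underline{1}\in\mathfrak{B}_k(P^n)$ the paper simply cites \cite[Corollary~3.1]{cl}, whereas you spell out the elementary orthogonality argument explicitly, which is cleaner; (ii) for the bound $2k<n$ the paper actually offers \emph{two} proofs, and yours coincides with its second one (via Proposition~\ref{Prop:dim-bound}, Dehn--Sommerville, and positivity of the $h$-vector). The paper's first proof is more direct: if $2k\geq n$ then at any vertex $v$ one can choose two codimension-$k$ faces $f_1,f_2$ with $f_1\cap f_2=\{v\}$, giving $\langle\xi_{f_1},\xi_{f_2}\rangle=1$ and contradicting self-duality immediately. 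A small remark on your write-up: the positivity $h_i(P^n)\geq 1$ does not require the full strength of the generalized lower bound theorem; the unimodality $1=h_0\leq h_1\leq\cdots\leq h_{\lfloor n/2\rfloor}$ combined with Dehn--Sommerville already suffices (and the paper itself invokes $h_i>0$ without further comment).
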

\begin{proof}
 Since $\mathfrak{B}_k(P^n)$ is self-dual, we have
  $\dim_{\mathbb{F}_2} \mathfrak{B}_k(P^n)={{h_1(P^n)+\cdots+h_n(P^n)}\over 2}$. Moreover,  from \cite[Corollary 3.1]{cl} it is easy to see that $\underline{1}\in \mathfrak{B}_k(P^n)$. Obviously, $k=0$ is impossible since $\dim \mathfrak{B}_0(P^n)=1$ and $n\geq 3$.
 If $2k\geq n$, then at any vertex $v$ of $P^n$ there exist two codimension-$k$ faces $f_1$ and $f_2$ of $P^n$ such that $f_1\cap f_2=v$. But then $\langle \xi_{f_1},\xi_{f_2}\rangle=1$ which contradicts the assumption that $\mathfrak{B}_k(P^n)$ is self-dual. We can also prove 
 $2k<n$ using Proposition~\ref{Prop:dim-bound}. Indeed, by Proposition~\ref{Prop:dim-bound},
  we have
 \begin{equation} \label{Equ:half}
   \dim_{\mathbb{F}_2} \mathfrak{B}_k(P^n)={{h_1(P^n)+\cdots+h_n(P^n)}\over 2} \geq
  h_0(P^n)+\cdots+h_k(P^n).
 \end{equation} 
  Then since $h_i(P^n)>0$ and $h_i(P^n)=h_{n-i}(P^n)$ (Dehn-Sommerville relations)
   for all $0\leq i \leq n$, we must have $2k< n$. 
   \end{proof} 

   \begin{thm} \label{thm:Self-Dual}
  For an $n$-dimensional simple polytope $P^n$ with $n\geq 3$, 
 $\mathfrak{B}_k(P^n)$ is a self-dual code if and only if the following two conditions are satisfied:
 \begin{itemize}
  \item[(a)] $|V(P^n)|$ is even and 
      $\dim_{\mathbb{F}_2}\mathfrak{B}_k(P^n) = \frac{|V(P^n)|}{2}$.\vskip .1cm
  \item[(b)] All faces of codimensions $k, \ldots , 2k$ in $P^n$ have an even number of vertices.
\end{itemize}
\end{thm}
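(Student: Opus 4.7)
The plan is to prove both directions using the key identity $\langle u, v\rangle = wt(u\circ v) \bmod 2$ together with the multiplicative rule $\xi_{f_1}\circ \xi_{f_2}=\xi_{f_1\cap f_2}$ from equation~\eqref{Equ:Product-Face}, so that inner products of generators of $\mathfrak{B}_k(P^n)$ reduce to parities of vertex counts of intersections of codimension-$k$ faces. This makes the characterization of self-duality essentially a combinatorial statement about which faces have an even number of vertices.

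For necessity, suppose $\mathfrak{B}_k(P^n)$ is self-dual. Then $l = |V(P^n)|$ must be even and $\dim_{\mathbb{F}_2}\mathfrak{B}_k(P^n) = l/2$ by the standard properties of self-dual codes, giving condition~(a). For condition~(b), I would first observe that any face $f$ of codimension $j$ with $k\leq j\leq 2k$ can be written as $f=f_1\cap f_2$ with $f_1,f_2$ codimension-$k$ faces: writing $f=F_{i_1}\cap\cdots\cap F_{i_j}$, take $f_1=F_{i_1}\cap\cdots\cap F_{i_k}$ and $f_2=F_{i_{j-k+1}}\cap\cdots\cap F_{i_j}$, which is possible precisely because $k\leq j\leq 2k$. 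Then self-duality and Lemma~\ref{code} give $\xi_{f_1}\circ \xi_{f_2} = \xi_f \in \mathcal{V}_{l}$, i.e., $wt(\xi_f)$ is even. The boundary case $j=k$ also follows directly from $\langle \xi_f,\xi_f\rangle=wt(\xi_f)\bmod 2=0$.

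For sufficiency, assume (a) and (b). By Lemma~\ref{code} it suffices to check $\langle x,y\rangle=0$ on a generating set, namely for all pairs of codimension-$k$ face vectors $\xi_{f_1},\xi_{f_2}$. The identity $\langle \xi_{f_1},\xi_{f_2}\rangle = wt(\xi_{f_1}\circ\xi_{f_2})\bmod 2 = wt(\xi_{f_1\cap f_2})\bmod 2$ reduces this to showing $f_1\cap f_2$ has an even number of vertices (interpreting the empty intersection as contributing $0$). Since $P^n$ is simple, a nonempty $f_1\cap f_2$ is a face of codimension $j$ with $k\leq j\leq 2k$, so condition~(b) yields the required evenness. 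This shows $\mathfrak{B}_k(P^n)\subset \mathfrak{B}_k(P^n)^\perp$, and then the dimension equality from~(a) promotes the inclusion to equality.

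There is no serious obstacle; the only thing to be careful about is the range $k\leq j\leq 2k$, where the lower bound ensures the intersection is still at most codimension-$2k$ so both "halves" can be chosen of codimension exactly $k$, and the upper bound ensures the intersection actually has codimension at least $k$ so that it lies in the range controlled by~(b). I would present the two directions in succession, invoking Lemma~\ref{code} and the product rule~\eqref{Equ:Product-Face} at the appropriate moments, and briefly note the consistency with Lemma~\ref{ineq} (which already guarantees $0<2k<n$ whenever self-duality holds, automatically giving that the relevant codimension-$j$ faces with $k\leq j\leq 2k$ are proper faces of $P^n$).
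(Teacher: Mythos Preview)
Your proposal is correct and follows essentially the same approach as the paper: both directions use Lemma~\ref{code} and the product rule~\eqref{Equ:Product-Face} to reduce self-duality to the parity of vertex counts of intersections $f_1\cap f_2$ of codimension-$k$ faces, which lie in codimensions $k,\ldots,2k$. Your write-up is slightly more explicit than the paper's in constructing the decomposition $f=f_1\cap f_2$ via the facet indices, but the logical structure is identical.
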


\begin{proof}
  If $\mathfrak{B}_k(P^n)$ is a self-dual code, then (a) obviously holds. Let $|V(P^n)|=2r$.
 For any face $f$ of codimension $l$ where $k\leq l\leq 2k$, we can always write 
 $f=f_1\cap f_2$ where $f_1$ and $f_2$ are faces of codimension $k$. In particular if $f$ is of codimension $k$, we just let $f_1=f_2=f$. Then 
  $\xi_f=\xi_{f_1}\circ \xi_{f_2} \in \mathcal{V}_{2r}$ since $\mathfrak{B}_k(P^n)$ is self-dual
  (see Lemma~\ref{code}). This implies that the number of vertices of $f$ is even.
  \vskip .1cm

   Conversely, suppose $\mathfrak{B}_k(P^n)$ satisfies (a) and (b). For any
  codimension $k$ faces $f$ and $f'$ of $P^n$, either $f\cap f'=\varnothing$ or 
  the codimension of $f\cap f'$ is between $k$ and $2k$. Then by (b), the number of vertices of
   $f\cap f'$ is even, which
   implies that $\langle \xi_f, \xi_{f'} \rangle = 0$. Then by Lemma~\ref{code},
  $\mathfrak{B}_k(P^n)$ is self-dual in $\mathbb{F}^{|V(P^n)|}_2$. Note that by~\eqref{Equ:half},
   the condition (a) implies $0<2k<n$ when $n\geq 3$. 
\end{proof} 

    \begin{rem}
   When $k \geq \frac{n-2}{2}$ ($n\geq 3$), the condition (b) in Theorem~\ref{thm:Self-Dual} 
   implies that the polytope $P^n$ is $n$-colorable. But 
    if $k<\frac{n-2}{2}$, the condition (b) cannot guarantee that $P^n$ is $n$-colorable.
    For example let $P^n =\Delta^2\times [0,1]^{n-2}$, $n\geq 3$, 
     where $\Delta^2$ is the $2$-simplex. Then $P^n$ satisfies 
   the condition (b) for all $k<\frac{n-2}{2}$ because
     any face of $P^n$ with dimension greater than $2$ has an even number of vertices.
   But by Theorem~\ref{j}(b),
    $P^n$ is not $n$-colorable since $\Delta^2$ is a $2$-face of $P^n$.
    \end{rem}

 \noindent \textbf{Problem 2:} 
   For an arbitrary simple polytope $P^n$,  
   determine the dimension of $\mathfrak{B}_k(P^n)$ for all
   $0\leq k \leq n$. 
   \vskip .1cm
   
   We have seen in Corollary~\ref{col-self} 
   that when a simple $n$-polytope $P^n$ is $n$-colorable,
   the dimension of $\mathfrak{B}_k(P^n)$ can be expressed by the $h$-vector of $P^n$.
   But generally, the answer to this problem is not clear to us.\vskip .1cm

 \begin{prop}\label{self-poly}
 Let $P^n$ be a simple $n$-polytope  with $2r$ vertices and $m$ facets and $n\geq 3$.
\begin{itemize}
\item[(a)] When $n=3$, $\mathfrak{B}_k(P^3)$ is a self-dual code if and only if $k=1$ and $P^3$ is 3-colorable.
  \item[(b)] When $n=4$, $\mathfrak{B}_k(P^4)$ is never self-dual for any $0\leq k \leq n$.
\item[(c)] When $n=5$,  $\mathfrak{B}_k(P^5)$ is a self-dual code if and only if $k=2$ and $P^5$ is 5-colorable.
\item[(d)] When $n>5$, if $\mathfrak{B}_k(P^n)$ is a self-dual code and $m> {{(n+1)(n-2)}\over {n-3}}$, then $k\geq 2$.
\end{itemize}
\end{prop}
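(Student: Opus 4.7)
The plan combines Lemma~\ref{ineq} (forcing $0<2k<n$), Theorem~\ref{thm:Self-Dual} (self-duality is equivalent to $\dim\mathfrak{B}_k(P^n)=|V(P^n)|/2$ together with the even-vertex condition on all faces of codimensions $k,\ldots,2k$), Corollary~\ref{col-self} (for $n$-colorable $P^n$, self-duality forces $n$ odd and $k=(n-1)/2$), Theorem~\ref{j} (relating face-parity to colorability), and the lower bound theorem (LBT) for simple polytopes, which gives $h_i\ge h_1=m-n$ for $1\le i\le n-1$ and hence
\[
|V(P^n)|=\sum_{i=0}^n h_i\ge 2+(n-1)(m-n).
\]

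Parts (a) and (b) follow immediately. For $n=3$, Lemma~\ref{ineq} forces $k=1$; codim-$2$ faces are edges (always even-vertex), so condition (b) reduces to "every facet has even vertex count," i.e., $P^3$ is $3$-colorable by Theorem~\ref{j}. Corollary~\ref{col-self} then supplies the dimension equality (using $|V|/2=m-2=h_0+h_1$ from $3v=2e$ and Euler), and the converse is Corollary~\ref{col-self}. For $n=4$, condition (b) forces the $2$-faces to have even vertex count, giving $4$-colorability by Theorem~\ref{j}; but Corollary~\ref{col-self} rules out self-duality since $n$ is even.

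For $n=5$ (part (c)), Lemma~\ref{ineq} allows $k\in\{1,2\}$. When $k=2$, condition (b) makes $2$-faces (codim-$3$) have even vertex count, hence $P^5$ is $5$-colorable, and Corollary~\ref{col-self} delivers self-duality at $k=(n-1)/2=2$. The harder case is $k=1$: combining $\dim\mathfrak{B}_1\le m$ with $\dim=|V|/2$ gives $|V|\le 2m$, and the LBT gives $|V|\ge 4m-18$, so $m\le 9$. The finitely many simple $5$-polytopes with $6\le m\le 9$ are then eliminated individually: the LBT-equality cases correspond to vertex-truncated simplices whose $\Delta^4$ truncation facets have $5$ (odd) vertices, violating condition (b); the remaining candidates (such as $\Delta^4\times\Delta^1$, $\Delta^3\times\Delta^2$, $\Delta^3\times I^2$, $\Delta^2\times I^3$) either contain a facet with odd vertex count, or have $\dim\mathfrak{B}_1<|V|/2$ by direct dimension counting using the linear relations among the $\xi_F$'s (e.g.\ pairs of opposite facets in an $I^k$-factor sum to $\underline{1}$).

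Part (d) is the cleanest application of this framework. For $n>5$ and $k=1$, the inequality $\dim\mathfrak{B}_1=|V|/2\le m$ combined with the LBT bound yields
\[
2+(n-1)(m-n)\le 2m\iff (n-3)m\le n^2-n-2=(n+1)(n-2),
\]
so $m\le (n+1)(n-2)/(n-3)$ for $n>3$. Contrapositively, if $m$ exceeds this bound then $k=1$ is impossible, forcing $k\ge 2$. The main obstacle is the finite case analysis in (c) for $k=1$: while the LBT reduces the claim to checking polytopes with $6\le m\le 9$, a fully conceptual argument (e.g., via the $g$-theorem constraints on simple $5$-polytopes, or by combining the facet-parity condition (b) with the incidence equality $5|V|=\sum_F|V(F)|$ and the lower bound $|V(F)|\ge 6$) would be preferable to the direct enumeration sketched above.
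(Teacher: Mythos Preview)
Your treatment of parts (a), (b), and (d) is correct and essentially matches the paper's argument: Lemma~\ref{ineq} pins down $k$, Theorem~\ref{thm:Self-Dual}(b) combined with Theorem~\ref{j} forces colorability, and Corollary~\ref{col-self} closes the loop; for (d) the LBT inequality $|V(P^n)|\ge (n-1)m-(n+1)(n-2)$ combined with $\dim\mathfrak{B}_1\le m$ is exactly what the paper does.

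Part (c), however, has a genuine gap in the $k=1$ case. You correctly reduce to $6\le m\le 9$, but your enumeration of candidates is not exhaustive: the simple $5$-polytopes with at most $9$ facets are \emph{not} just stacked polytopes and the handful of products you list. The paper proceeds more systematically. First, from $\dim\mathfrak{B}_1=r\le m$ one gets $h_2(P^5)\le 4$, and the $g$-theorem constraints $h_1\le h_2\le 2h_1-1$ then cut the possibilities down to exactly six $h$-vectors. Three of these are realized only by stacked polytopes (LBT equality, \cite[Theorem~1.37]{bp}) and are eliminated via their $\Delta^4$ facets; the $h$-vector $(1,2,3,3,2,1)$ is realized only by the dual of the cyclic polytope $C^5(7)$ (UBT equality), which is eliminated by an explicit vertex-facet incidence check; the $5$-simplex is obvious. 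The remaining $h$-vector $(1,3,4,4,3,1)$ corresponds to simplicial $5$-polytopes with $8$ vertices and $16$ facets, and here the paper invokes Gr\"unbaum's complete classification of simplicial $5$-polytopes with $8$ vertices via Gale diagrams \cite[\S6.3]{gr} to identify the two candidates and eliminate both by direct inspection. Your sketch misses this organizing role of the $g$-theorem and the need for an external classification at the last step; without these, ``the remaining candidates (such as \ldots)'' is not a proof.
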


\begin{proof}
 When $n=3$, by Lemma~\ref{col-self} it suffices to show that if  $\mathfrak{B}_k(P^3)$ is a self-dual code, then $k=1$ and $P^3$ is $3$-colorable. Assume that $\mathfrak{B}_k(P^3)$ is a self-dual code.
   Then by Lemma~\ref{ineq},  $k$ must be $1$ and any
   $2$-face of $P^3$ has an even number of vertices. So $P^3$ is 3-colorable by Theorem~\ref{j}.
        This proves (a).
 \vskip .1cm
 When $n=4$, assume that  $\mathfrak{B}_k(P^4)$ is a self-dual code. By Lemma~\ref{ineq},  $k$ must be $1$ and  any $2$-face has an even number of vertices. So $P^4$ is $4$-colorable
 by Theorem~\ref{j}. Then (b) follows from Lemma~\ref{col-self}.
\vskip .1cm

  Now let $n\geq 5$ and assume that $\mathfrak{B}_k(P^n)$ is a self-dual code.
  Let $f_{k-1}(P^n)$ denote the number of codimension-$k$ faces in $P^n$.
  Then by~\cite[Theorems 1.33 and Theorem 1.37]{bp}, we have
  $$f_{k-1}(P^n)\leq {{m}\choose k} \ \text{if}\ 2k<n, \ \text{and}\ f_{n-1}(P^n)=2r\geq (n-1)m-(n+1)(n-2).$$
  By the fact that $\dim_{\mathbb{F}_2} \mathfrak{B}_k(P^n) \leq f_{k-1}(P^n)$, we obtain
\begin{equation} \label{Equ:Dim-Inequ}
  (n-1)m-(n+1)(n-2)\leq 2r=2 \dim_{\mathbb{F}_2} \mathfrak{B}_k(P^n)
 =h_0(P^n)+\cdots+h_n(P^n) \leq 2{{m}\choose k}.
 \end{equation}
If $k=1$, we have $m\leq {{(n+1)(n-2)}\over {n-3}}$. Thus, if $m> {{(n+1)(n-2)}\over {n-3}}$, then $k\geq 2$. This proves (d).

\vskip .1cm

Next we consider the case $n=5$ with $k=1$.
 In this case, we have $6\leq m\leq 9$ and by Lemma~\ref{ineq},
 all $3$-faces and $4$-faces of $P^n$ have even of vertices.
 Moreover, $\dim_{\mathbb{F}_2} \mathfrak{B}_k(P^n) \leq f_{k-1}(P^n)$ implies that
 $$r=h_0(P^5) +h_1(P^5) +h_2(P^5) =1+m-5+h_2(P^5)=m-4+h_2(P^5)\leq m.$$
 So we have $h_2(P^5)\leq 4$. In addition, by g-theorem we have that
 $h_2(P^5)\geq h_1(P^5)=m-5$, and $h_1(P^5)-h_0(P^5)\geq h_2(P^5)-h_1(P^5)$ so
  $h_2(P^5)\leq 2h_1(P^5)-h_0(P^5)=2m-11$.
 Combining all these restrictions, we
  can list all such simple $5$-polytopes in terms of their $h$-vectors as follows:
\begin{itemize}
 \item[$\bullet$] $h(P_1^5)=(1, 4, 4, 4, 4, 1)$ with $m=9$;
 \item[$\bullet$] $h(P_2^5)=(1, 3, 4, 4, 3, 1)$ with $m=8$;
\item[$\bullet$]  $h(P_3^5)=(1, 3, 3, 3, 3, 1)$ with $m=8$;
\item[$\bullet$] $h(P_4^5)=(1, 2, 3, 3, 2, 1)$ with $m=7$;
\item[$\bullet$] $h(P_5^5)=(1, 2, 2, 2, 2, 1)$ with $m=7$;
\item[$\bullet$] $h(P_6^5)=(1,1,1,1,1,1)$ with $m=6$.
\end{itemize}

  Clearly, $P^5_6$ is just a $5$-simplex. So $P_6^5$ should be excluded since each facet of
$P_6^5$ is a $4$-simplex which has an odd number of vertices.
By~\cite[Theorem 1.37]{bp}, a direct check shows that
$P_1^5, P_3^5, P_5^5$ are all stacked 5-polytopes. Then they can also be excluded
 since any stacked 5-polytope has at least one $4$-simplex as its facet. Recall that a simple $n$-polytope
$S$ is called {\em stacked} if there is a sequence $S_0,  S_1,\ldots,   S_l = S$ of simple
$n$-polytopes such that $S_0$ is an $n$-simplex and $S_{i+1}$ is
obtained from $S_i$ by cutting a vertex of $S_i$ (cf. \cite[Definition 1.36]{bp}).

\vskip .1cm
By \cite[Theorem 1.33]{bp}, we can directly check that $P_4^5$ is the dual polytope of a cyclic polytope $C^5(7)$. Let $\{F_1, ..., F_{7}\}$ be the set of all facets of $P_4^5$.
By the main theorem in \cite{sh}, we can write all the $12$
vertices $v_1, ..., v_{12}$ of $P_4^5$
explicitly in terms of the intersections of its facets $F_1, ..., F_7$ as follows:
\begin{align*}
& v_1=F_1\cap F_2\cap F_3\cap F_4\cap F_5, \ \ \ v_2=F_1\cap F_2\cap F_3\cap F_4\cap F_7, \\
& v_3=F_1\cap F_2\cap F_3\cap F_6\cap F_7, \ \ \ v_4=F_1\cap F_2\cap F_5\cap F_6\cap F_7,\\
& v_5=F_1\cap F_4\cap F_5\cap F_6\cap F_7, \ \ \ v_6=F_3\cap F_4\cap F_5\cap F_6\cap F_7,\\
& v_7=F_1\cap F_3\cap F_4\cap F_5\cap F_6, \ \ \ v_8=F_2\cap F_3\cap F_4\cap F_5\cap F_7,\\
& v_9=F_1\cap F_2\cap F_4\cap F_5\cap F_7, \ \ \ v_{10}=F_1\cap F_3\cap F_4\cap F_6\cap F_7,\\
& v_{11}=F_1\cap F_2\cap F_3\cap F_5\cap F_6, \ \  v_{12}=F_2\cap F_3\cap F_5\cap F_6\cap F_7.
\end{align*}
 We can easily see that each of $F_1, F_3, F_5, F_7$ has $9$ vertices,
 and each of $F_2, F_4, F_6$ has $8$ vertices. Thus, $P_4^5$ should be excluded as well.

 \vskip .1cm
 Now the only case left to check 
 is $P_2^5$. Note that the dual polytope of $P_2^5$ is a simplicial
  $5$-polytope with $8$ vertices and $16$ facets. 
 By the classification in~\cite[\S 6.3, p.108-112]{gr}, there are exactly $8$ 
 simplicial $5$-polytopes with $8$ vertices up to combinatorial equivalence. 
 They are listed in \cite[\S 6.3, p.112]{gr} in terms of standard contracted Gale-diagrams. 
 By examining those Gale-diagrams,
   we find that only two of them (shown in Figure~\ref{p:Gale-Diagram})
  give simplicial $5$-polytopes with $8$ vertices and $16$ facets. Let $Q_1$ 
 be the simplicial $5$-polytope corresponding to the left, and $Q_2$ to the right
 diagram in Figure~\ref{p:Gale-Diagram}.
   A simple calculation shows that
   $\dim_{\mathbb{F}_2} \mathfrak{B}_1(Q_1^*)=6\neq 8$ and $Q_2^*$ has a facet with $11$ 
   vertices. Hence $P_2^5$ cannot be $Q_1^*$ or $Q_2^*$. 
   So $P_2^5$ should be excluded as well.
   
   \begin{figure}[h]
\includegraphics[width=0.43\textwidth]{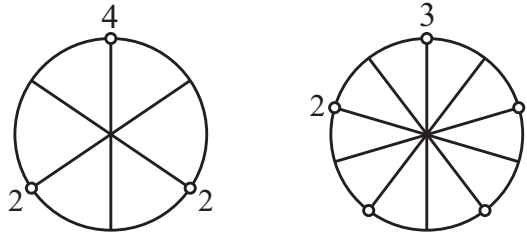}
\caption{Two standard contracted Gale-diagrams}
\label{p:Gale-Diagram}
\end{figure}

 \vskip .1cm
 Combining the above arguments, we can conclude that when $n=5$, $\mathfrak{B}_1(P^5)$ is never self-dual. Therefore, if $\mathfrak{B}_k(P^5)$ is self-dual, $k$ must be $2$ and
 so $P^5$ is $5$-colorable by Lemma~\ref{ineq}. Then (c) follows from Lemma~\ref{col-self}.
 \end{proof}

 \begin{cor}
  For an $n$-dimensional simple polytope $P^n$ with $2r$ vertices and $m$-facets, 
   if $\mathfrak{B}_k(P^n)$ is 
   a self-dual code in $\mathbb{F}_2^{2r}$ and $r \geq {m\choose l}$ for some $l< \frac{m-1}{2}$, 
   then $k\geq l$.
 \end{cor}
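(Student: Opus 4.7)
The plan is to pit a straightforward dimension bound for $\mathfrak{B}_k(P^n)$ against the hypothesis $r\geq\binom{m}{l}$, using the monotonicity of binomial coefficients to rule out $k<l$.

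First I would observe that in a simple $n$-polytope every codimension-$k$ face is the intersection of a unique $k$-element subset of facets, so the number $f_{k-1}(P^n)$ of codimension-$k$ faces satisfies $f_{k-1}(P^n)\leq\binom{m}{k}$. Since $\mathfrak{B}_k(P^n)$ is spanned by the vectors $\xi_f$ with $f$ a codimension-$k$ face, this yields $\dim_{\mathbb{F}_2}\mathfrak{B}_k(P^n)\leq\binom{m}{k}$.

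Next, because $\mathfrak{B}_k(P^n)$ is assumed self-dual in $\mathbb{F}_2^{2r}$, we have $\dim_{\mathbb{F}_2}\mathfrak{B}_k(P^n)=r$; combined with $r\geq\binom{m}{l}$ this gives $\binom{m}{l}\leq r\leq\binom{m}{k}$. To finish I would argue by contradiction: suppose $k<l$. The ratio $\binom{m}{j+1}/\binom{m}{j}=(m-j)/(j+1)$ exceeds $1$ exactly when $j<(m-1)/2$, so the sequence $\binom{m}{j}$ is strictly increasing on $\{0,1,\ldots,\lfloor(m-1)/2\rfloor\}$. The hypothesis $l<(m-1)/2$ places both $k$ and $l$ in this range, so $\binom{m}{k}<\binom{m}{l}$, contradicting $\binom{m}{l}\leq\binom{m}{k}$. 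Hence $k\geq l$.

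There is no substantive obstacle here; once one notices the face-count bound $f_{k-1}(P^n)\leq\binom{m}{k}$, the rest reduces to the elementary monotonicity of the left half of a row of Pascal's triangle.
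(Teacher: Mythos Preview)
Your proof is correct and follows essentially the same line as the paper: bound $\dim_{\mathbb{F}_2}\mathfrak{B}_k(P^n)\leq f_{k-1}(P^n)\leq\binom{m}{k}$, combine with self-duality to get $\binom{m}{l}\leq r\leq\binom{m}{k}$, and then use the monotonicity of $\binom{m}{j}$ for $j<(m-1)/2$. The paper additionally invokes Lemma~\ref{ineq} to obtain $2k<n\leq m-1$ and hence $k<(m-1)/2$ directly, whereas your contradiction argument (if $k<l$ then automatically $k<l<(m-1)/2$) renders that step unnecessary---a minor but pleasant streamlining.
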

 \begin{proof}
  By~\eqref{Equ:Dim-Inequ}, we have $\binom{m}{k}\geq r \geq {m\choose l}$. Then since
   $2k< n\leq m-1$ (by Lemma~\ref{ineq}), we obtain $k\geq l$.
 \end{proof}\vskip .1cm

   In general, judging the existence of self-dual codes $\mathfrak{B}_k(P^n)$ for
a non-$n$-colorable simple $n$-polytope $P^n$ seems to be a quite hard
 problem when $n>5$. On the other hand, Corollary~\ref{col-self} tells us that 
 $2k$-colorable simple
 $2k$-polytopes cannot produce any self-dual codes.
 Then considering the statements in Proposition~\ref{self-poly},
  it is reasonable to pose the following conjecture.\vskip .2cm

\noindent {\bf Conjecture.} {\em Let $P^n$ be a simple $n$-polytope  with $2r$ vertices and $m$ facets, where $n\geq 3$. Then
$\mathfrak{B}_k(P^n)$ is a self-dual code if and only if $P^n$ is $n$-colorable,  $n$ is odd and $k={{n-1}\over 2}$.}

 \vskip .6cm

 \section{Minimum distance of self-dual codes from $3$-dimensional simple polytopes}

 \begin{prop} \label{prop:3-polytope}
 For any $3$-dimensional $3$-colorable simple polytope $P^3$,
 the minimum distance of the self-dual code $\mathfrak{B}_1(P^3)$ is always equal to $4$.
 \end{prop}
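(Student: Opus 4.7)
The plan is to establish the bound in two directions: the minimum distance is at most $4$ and at least $4$.

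For the upper bound, I would first note that by the definition of $\mathfrak{B}_1(P^3)$ in~\eqref{Equ:Def-Bk}, each vector $\xi_F$ associated with a $2$-face $F$ is itself a codeword, having Hamming weight equal to the number of vertices of $F$. Hence the minimum distance is at most $\min\{\#V(F) : F \text{ is a } 2\text{-face of } P^3\}$ (this is exactly the bound in Corollary~\ref{Cor:Main-2} applied to $n=3$). Next I would use Euler's formula: for a simple $3$-polytope, letting $f_k$ denote the number of $k$-gonal $2$-faces, one easily derives $\sum_k (6-k) f_k = 12$. Since $P^3$ is $3$-colorable, Theorem~\ref{j}(b) forces every $k$ to be even, so $k\in\{4,6,8,\ldots\}$. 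Then $2 f_4 - 2 f_8 - 4 f_{10} - \cdots = 12$, giving $f_4 \geq 6$. In particular, $P^3$ has quadrilateral $2$-faces, and the upper bound is $4$.

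For the lower bound, since $P^3$ is $3$-colorable and $n=3$ is odd, Corollary~\ref{col-self} guarantees that $\mathfrak{B}_1(P^3)$ is self-dual, and in particular contained in $\mathcal{V}_{2r}$, so every codeword has even Hamming weight. Only codewords of weight $2$ need to be ruled out. Suppose for contradiction that $\eta \in \mathfrak{B}_1(P^3)$ has weight $2$; then $\eta = \xi_{v_1} + \xi_{v_2}$ for two distinct vertices $v_1 \neq v_2$ of $P^3$. By self-duality, $\langle \eta,\xi_F\rangle = 0$ for every $2$-face $F$. But $\langle \xi_{v_1}+\xi_{v_2},\xi_F\rangle$ counts (mod $2$) how many of $v_1,v_2$ lie in $F$, so the vanishing means every $2$-face contains either both or neither of $v_1,v_2$. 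Equivalently, the set of $2$-faces containing $v_1$ coincides with the set of $2$-faces containing $v_2$.

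I would then finish by using simplicity of $P^3$: each vertex of a simple $3$-polytope is the unique intersection of the three facets meeting at it, so the multiset (indeed the set) of facets containing a vertex determines the vertex. This forces $v_1=v_2$, a contradiction. Combined with the upper bound, the minimum distance equals $4$. The content is essentially short and mechanical once the self-duality statement of Corollary~\ref{col-self} is in hand; the only substantive input from combinatorial geometry is the Euler-formula inequality giving the quadrilateral $2$-face, and the only obstacle is correctly exploiting the self-duality to translate a weight-$2$ codeword into the strong constraint that two distinct vertices would have the same facet-incidence pattern.
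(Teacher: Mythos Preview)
Your proof is correct, and for the lower bound it takes a genuinely different and more economical route than the paper's. The paper does not exploit self-duality at that step: given $\alpha=\xi_{F_1}+\cdots+\xi_{F_k}$ of weight~$2$, it classifies the vertices of $F_1\cup\cdots\cup F_k$ by how many of the $F_i$ contain them (``type-$1$/$2$/$3$''), observes that $wt(\alpha)=l_1+l_3$, and then rules out each of the cases $(l_1,l_3)\in\{(2,0),(1,1),(0,2)\}$ by appealing to Balinski's theorem on the $3$-connectivity of the graph $\Gamma(P^3)$.

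Your argument instead uses the already-established self-duality (Corollary~\ref{col-self}): orthogonality of a hypothetical weight-$2$ codeword $\xi_{v_1}+\xi_{v_2}$ against every generator $\xi_F$ forces $v_1$ and $v_2$ to have identical facet-incidence, and in a simple polytope the facets through a vertex determine it. This is shorter and avoids Balinski's theorem entirely; the paper's approach, by contrast, argues directly from the definition of $\mathfrak{B}_1(P^3)$ without invoking the self-duality corollary, so it is more self-contained at this particular step. For the upper bound the two arguments coincide, your Euler-formula computation $\sum_k(6-k)f_k=12$ being an explicit version of the ``well known'' fact the paper cites.
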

\begin{proof}
 It is well known that any $3$-dimensional simple polytope must have a $2$-face
with less than $6$ vertices. Then since $P^3$ is even, there must be a $4$-gon $2$-face
 in $P^3$. So by Corollary~\ref{Cor:Main-2}, the minimum distance of $\mathfrak{B}_1(P^3)$ is
  less or equal to $4$. In addition,
  we know that the Hamming weight of any element in $\mathfrak{B}_1(P^3)$
  is an even integer. So we only
  need to prove that for any $2$-face $F_1,\ldots, F_k$ of $P^3$,
  the Hamming weight of $\alpha=\xi_{F_1}+\cdots + \xi_{F_k}\in \mathfrak{B}_1(P^3)$ cannot be $2$.
  \vskip .1cm
 $\bullet$ Let $V(\alpha)$ be the union of all the vertices of $F_1,\ldots, F_k$.
 \vskip .1cm

 $\bullet$ Let $\Gamma(\alpha)$ be the union of all the vertices and edges of
   $F_1,\ldots, F_k$.
  So $\Gamma(\alpha)$ is a graph with vertex set $V(\alpha)$.

  \begin{figure}[h]
\includegraphics[width=0.61\textwidth]{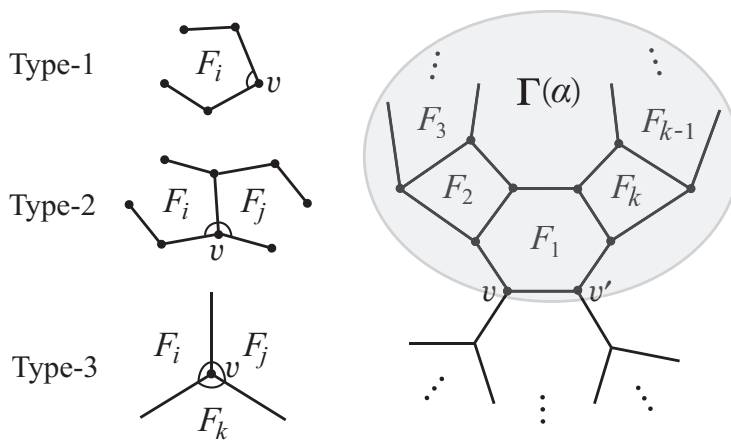}
\caption{The graph of a simple $3$-polytope}
\label{p:Graph}
\end{figure}

  \vskip .1cm

  A vertex $v$ in $V(\alpha)$ is called \emph{type-$j$} if $v$ is incident to
  exactly $j$ facets in $F_1,\ldots, F_k$. Then since $P^3$ is simple,
 any vertex in $V(\alpha)$ is of type-$1$, type-$2$ or type-$3$ (see Figure~\ref{p:Graph}).
 Suppose there are $l_j$ vertices of type-$j$ in $V(\alpha)$, $j=1,2,3$.
 It is easy to see that
  the Hamming weight of $\alpha$ is equal to $l_1+l_3$. Assume that $wt(\alpha)=l_1+l_3 = 2$.
 Then we have three cases for $l_1$ and $l_3$:
   $$
    \mathrm{(a)}\ l_1=2, l_3=0;\ \ \ \mathrm{(b)} \ l_1=1, l_3=1; \ \ \
     \mathrm{(a)}\ l_1=0, l_3=2. $$

  Note that any vertex of type-$2$ or type-$3$ in $V(\alpha)$ meets exactly three edges
  in $\Gamma(\alpha)$. In other words,
   $\Gamma(\alpha)$ is a graph whose vertices are all $3$-valent except the type-$1$ vertices.
  Let $\Gamma(P^3)$ denote the graph of $P^3$ (the union of all the vertices  and edges of $P^3$). and let $\overline{\Gamma}(\alpha) = \Gamma(P^3)\backslash \Gamma(\alpha)$.
Observe that
  $\Gamma(\alpha)$ meets $\overline{\Gamma}(\alpha)$ only at the type-$1$ vertices in $V(\alpha)$.
  \vskip .1cm

 \begin{itemize}
    \item In the case (a), there are two type-$1$ vertices in
   $V(\alpha)$, denoted by $v$ and $v'$.
   Then since $\Gamma(\alpha)$ meets $\overline{\Gamma}(\alpha)$ only at $\{v, v'\}$, removing
    $v$ and $v'$ from the graph $\Gamma(P^3)$ will disconnect $\Gamma(P^3)$
    (see Figure~\ref{p:Graph} for example).
    But according to Balinski's theorem (see~\cite{bal}),
  the graph of any $3$-dimensional simple polytope is a $3$-connected
  graph (i.e. removing any two vertices from the graph does not disconnect it).
  So (a) is impossible.\vskip .1cm

  \item In the case (b), there is only one type-$1$ vertex in $V(\alpha)$, denoted by $v$.
  By the similar argument as above, removing
    $v$ from the graph $\Gamma(P^3)$ will disconnect $\Gamma(P^3)$. This
    contradicts the $3$-connectivity of $\Gamma(P^3)$. So (b) is impossible either.\vskip .1cm

     \item In the case (c), there are no type-$1$ vertices in $V(\alpha)$. So
  $\Gamma(\alpha)$ is a $3$-valent graph. This implies that $\Gamma(\alpha)$
  is the whole $1$-skeleton of $P^3$, and so $V(\alpha)=V(P^3)$. Then the Hamming wight
  $wt(\alpha) = wt(\xi_{F_1}+\cdots + \xi_{F_k}) = wt(\underline{1}) = |V(P^3)| \geq 4$.
   But this
  contradicts our assumption that $wt(\alpha)=2$. So (c) is impossible.
  \end{itemize}

  Therefore, the Hamming weight of any element of $\mathfrak{B}_1(P^3)$
  cannot be $2$. So we finish the proof of the theorem.
  \end{proof}

  \begin{rem}
    It is shown in~\cite{Iz} that any $3$-dimensional $3$-colorable simple polytope
    can be obtained from the $3$-dimensional cube
    via two kinds of operations. So it might be possible
    to classify all the self-dual binary codes obtained from $3$-dimensional simple polytopes. 
    But we would expect the classification to be very complicated.
  \end{rem}
   \vskip .6cm

\section{Properties of $n$-dimensional $n$-colorable simple polytopes}

 For brevity, we use the words ``\emph{even polytope}'' to refer to an
  $n$-dimensional $n$-colorable simple polytope in the rest of the paper.
  Indeed, this term has already been used by Joswig~\cite{jos}.

\begin{defn} (\cite{perles} and \cite[Remark 2]{kalai})
Let $F$ be a facet of a simple polytope $P$ and $V(F)$ be the set of vertices of $F$.
 Define a map $\Xi_F: V(F)\rightarrow V(P)\setminus V(F)$ as follows.
 For each $v\in V(F)$, there is exactly one edge $e$ of $P$, such that $e\nsubseteq F$, $v\in e$
 (since $P$ is simple and $F$ is codimension one). Then let $\Xi_F(v)$ be
   the other endpoint of $e$.
\end{defn}
 \vskip .1cm

\begin{exam}
 Let $Q$ be the $6$-prism in Figure~\ref{6prism} and $F$ be the facet with vertex set
  $\{3,4,9,10\}$. Then by definition, $\Xi_F: \{3,4,9,10 \}\rightarrow \{1,2,5,6,7,8,11,12\}$ where
$$ \Xi(3)= 2,\ \Xi(4)= 5,\ \Xi(9)= 8,\ \Xi(10)= 11. $$
\end{exam}

 \vskip .1cm

\begin{figure}[h]
\includegraphics[width=0.37 \textwidth]{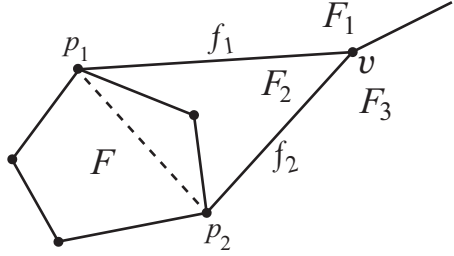}
\caption{A facet $F$ with $\Xi_F$ non-injective}
\label{p:proof-2}
\end{figure}
\vskip .1cm

\begin{prop} \label{Prop:inject}
 For an even polytope $P$, the map
 $\Xi_F$ is injective for any facet $F$ of $P$.
\end{prop}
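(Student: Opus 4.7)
The plan is to argue by contradiction. Suppose $\Xi_F$ fails to be injective, so there are two distinct vertices $v_1,v_2\in V(F)$ with $\Xi_F(v_1)=\Xi_F(v_2)=w$. By the definition of $\Xi_F$, the vertex $w$ lies outside $F$ and is joined to each $v_i$ by an edge $e_i=v_iw$ of $P$ that is not contained in $F$. My first step is therefore to record this data and to examine the two edges $e_1,e_2$ emanating from the common vertex $w$.

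Next, I will invoke the standard fact that in any simple $n$-polytope, the edges incident to a fixed vertex $w$ are in bijection with the facets through $w$, and any two such edges span a unique $2$-face of $P$ (namely the intersection of the $n-2$ facets through $w$ that contain neither edge). Applying this to $e_1$ and $e_2$, I obtain a unique $2$-face $f$ of $P$ with $e_1,e_2\subset f$, and in particular $v_1,v_2,w\in V(f)$.

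The third step is to analyze the face $F\cap f$. Since $F$ and $f$ are faces of $P$, their intersection is again a face of $P$, hence in particular a face of the polygon $f$. Because $v_1,v_2\in F\cap f$, this face is either an edge of $f$ or equals $f$ itself. The latter is impossible: if $f\subset F$ then $e_1\subset f\subset F$, contradicting $e_1\not\subset F$. Hence $F\cap f$ is an edge $e$ of $f$ with endpoints $v_1$ and $v_2$, so $v_1$ and $v_2$ are adjacent in the polygon $f$. But in the polygon $f$ the vertex $v_1$ is already adjacent to $w$ through $e_1$ and to $v_2$ through $e$, and similarly $v_2$ is adjacent to both $v_1$ and $w$; since each vertex of a polygon has exactly two neighbors, this forces $f$ to be the triangle with vertex set $\{v_1,v_2,w\}$.

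Finally, I will invoke Theorem~\ref{j}(b): for the even polytope $P$, every $2$-face has an even number of vertices. The triangle $f$ has three vertices, a contradiction, so $\Xi_F$ must be injective. There is no real obstacle here; the only point that needs a small combinatorial observation is the existence and uniqueness of the $2$-face spanned by two edges at a common vertex of a simple polytope, after which the $n$-colorability enters exactly through the prohibition of odd $2$-faces.
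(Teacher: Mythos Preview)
Your argument is correct and complete, but it follows a genuinely different route from the paper's. The paper works directly with an $n$-coloring: writing the common target vertex as $v=\bigcap_{i=1}^n F_i$ and the two edges as $\bigcap_{i\neq n}F_i$ and $\bigcap_{i\neq 1}F_i$, one sees that at $p_1$ the facet $F$ must carry the color of $F_n$ while at $p_2$ it must carry the color of $F_1$, an immediate contradiction. Your proof instead stays in the face lattice: you produce the unique $2$-face $f$ through the two edges at $w$, show that $F\cap f$ is forced to be the edge $v_1v_2$, and conclude that $f$ is a triangle, contradicting Theorem~\ref{j}(b). The paper's approach is marginally shorter and uses the coloring hypothesis in its raw form; yours is a clean reduction to the ``no odd $2$-face'' criterion and has the mild advantage of making explicit which $2$-face would go wrong.

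One small slip worth fixing: in your parenthetical description of $f$ you write ``the $n-2$ facets through $w$ that contain \emph{neither} edge.'' In fact no facet through $w$ misses both edges; the $2$-face $f$ is the intersection of the $n-2$ facets through $w$ that contain \emph{both} edges (equivalently, all facets through $w$ except the two that each omit one of $e_1,e_2$). This does not affect the argument, since the existence and uniqueness claim for $f$ is correct as stated.
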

\begin{proof}
Assume $\Xi_F$ is not injective.
There must exist two vertices $p_1,p_2\in F$ and a vertex
$v\notin F$ such that $v$ is connected to both
$p_1$ and $p_2$ by edges in $P$ (see Figure~\ref{p:proof-2}).
 Let $f_i$ be the edge
 with end points $p_i$ and $v$, $i=1,2$.
  Suppose the dimension of $P$ is $n$. Then there exist
  $n$ facets, $F_1,F_2,\ldots,F_n$, distinct to $F$, such that
$$v=\bigcap_{i=1}^nF_i,\ \  f_1=\bigcap_{i=1}^{n-1}F_i,\ \ f_2=\bigcap_{i=2}^nF_i.$$

Then we have
 $$p_1=F\bigcap \big(\bigcap_{i=1}^{n-1}F_i \big), \ \
    p_2=F\bigcap \big(\bigcap_{i=2}^{n}F_i\big). $$

Since $P$ is $n$-colorable, we can color all the facets of $P$ by
 $n$-colors $e_1,\ldots, e_n$ such that no adjacent facets are assigned
  the same color. Suppose $F_i$ is colored by $e_i$, $i=1,\ldots, n$. Then at $p_1$,
  $F$ has to be colored by $e_n$ while at $p_2$, $F$ has to be colored by $e_1$, contradiction.
\end{proof}

\vskip .1cm

\begin{prop}\label{lowerbound}  Let $P$ be an even polytope.
 For any facet $F$ of $P$, we have
 $$ |V(P)|\geq 2 |V(F)|.$$
 Moreover, $|V(P)|= 2|V(F)|$ if and only if
$P=F\times [0,1]$ where $[0,1]$ denotes a $1$-simplex.
\end{prop}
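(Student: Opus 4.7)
The inequality $|V(P)| \geq 2|V(F)|$ is immediate from Proposition \ref{Prop:inject}, which provides an injection $\Xi_F : V(F) \to V(P) \setminus V(F)$. The ``if'' direction in the equality characterization is trivial, since $V(F \times [0,1])$ splits into two copies of $V(F)$. My plan for the nontrivial ``only if'' direction is induction on $n = \dim P$, with $n = 1,2$ essentially immediate (for $n = 2$, an even polygon with twice as many vertices as one of its edges is a $4$-gon, hence a prism over an edge).

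Assume $|V(P)| = 2|V(F)|$, so $\Xi_F$ is a bijection, and fix an $n$-coloring of $\mathcal{F}(P)$ in which $F$ has color $e_n$. The bijectivity of $\Xi_F$ together with the observation that the unique edge at $v \in V(F)$ leaving $F$ lies in exactly the $n-1$ facets at $v$ of color $\neq e_n$ forces the following local property: for every $v' \in V(P) \setminus V(F)$, the unique $V(F)$-neighbor $u = \Xi_F^{-1}(v')$ is connected to $v'$ by an edge contained in every facet at $v'$ of color $\neq e_n$. Now fix any facet $F_i$ adjacent to $F$ (sharing a ridge, hence colored $\neq e_n$) and view $F_i$ as an $(n-1)$-dimensional even polytope with facet $F_i \cap F$ (using Theorem \ref{j}(d)). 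I claim $\Xi^{F_i}_{F_i \cap F}$ is a bijection: injectivity is Proposition \ref{Prop:inject}, and for surjectivity, given $v' \in V(F_i) \setminus V(F)$, the edge from $v'$ to $\Xi_F^{-1}(v')$ lies in every non-$e_n$ facet at $v'$, in particular in $F_i$, so its $V(F)$-endpoint sits in $V(F_i \cap F)$. By the inductive hypothesis, $F_i \cong (F_i \cap F) \times [0,1]$.

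The same missing-color observation shows that every facet of $P$ colored $\neq e_n$ must meet $F$ in a ridge (a disjoint such facet $G$ would contain a vertex $v'$ whose $V(F)$-neighbor is forced into $G$). In the prism structure of each adjacent facet $F_i = (F_i \cap F) \times [0,1]$, the top face is $F_i \cap F'_i$ for some facet $F'_i$ of $P$ colored $e_n$ and distinct from $F$. I would next show $F'_i$ is independent of $i$: for $v, w \in V(F)$ joined by an edge of $F$, pick a facet $F_i$ containing that edge; the prism structure places $\Xi_F(v)$ and $\Xi_F(w)$ in the top ridge of $F_i$, forcing $F'_i$ to be the unique $e_n$-facet through both $\Xi_F(v)$ and $\Xi_F(w)$. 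Propagating along the connected $1$-skeleton of $F$ yields a single facet $F'$ with $V(F') = V(P) \setminus V(F)$. Comparing face lattices then identifies $P$ combinatorially with $F \times [0,1]$: every proper face of $P$ is either a face of $F$, a face of $F'$ (bijectively matched to faces of $F$ via $\Xi_F$), or of the form $G_{i_1} \cap \cdots \cap G_{i_s}$ for adjacent facets $G_{i_j}$, and these three categories correspond to $f \times \{0\}$, $f \times \{1\}$, and $f \times [0,1]$ respectively. The main obstacle I anticipate is the synchronization step showing $F'_i$ is independent of $i$; this is precisely where the global structure of $P$ must be exploited beyond the per-facet inductive information.
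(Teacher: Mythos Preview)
Your argument for the inequality matches the paper's exactly. For the equality characterization, the paper's proof consists of a single sentence: ``If $|V(P)|= 2|V(F)|$, the injectivity of $\Xi_F$ implies $P=F\times [0,1]$,'' with no further justification. Your inductive argument via the $n$-coloring---showing each non-$e_n$ facet $F_i$ is itself a prism over $F_i\cap F$, then synchronizing the top ridges into a single facet $F'$ using connectedness of the $1$-skeleton of $F$---supplies precisely the details the paper omits. The synchronization you flag as the main obstacle is in fact already handled by your own argument: for adjacent $v,w\in V(F)$ you have shown that $\Xi_F(v)$ and $\Xi_F(w)$ lie in the same $e_n$-colored facet $F'_i$, and connectedness propagates this globally. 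One clean way to tighten the final face-lattice comparison is to note that the edge set of $P$ is now exactly that of $F\times[0,1]$ (edges within $F$, edges within $F'$, and the matching edges $v\,\Xi_F(v)$) and invoke the Blind--Mani/Kalai theorem that a simple polytope is determined by its graph. In short, your route is correct and considerably more explicit than the paper's, which is presumably treating the implication as folklore in the spirit of the Perles--Kalai references cited for $\Xi_F$.
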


\begin{proof}
 By Proposition~\ref{Prop:inject}, the map
    $\Xi_F: V(F)\rightarrow V(P)\setminus V(F)$ is injective. So we have
$$|V(F)|\leq |V(P)\setminus V(F)|=|V(P)|-|V(F)|.$$
So $|V(P)|\geq 2|V(F)|$. If $|V(P)|= 2|V(F)|$, the injectivity of $\Xi_F$ implies
 $P=F\times [0,1]$.
\end{proof}

\vskip .1cm

\begin{cor} \label{Cor:bound}
Let $f$ be a codimension-$k$ face of an even polytope $P$. Then $|V(P)| \geq 2^k|V(f)|$.
Moreover,
$|V(P)| = 2^k|V(f)|$ if and only if $P=f\times [0,1]^k$.
\end{cor}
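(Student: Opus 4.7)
The plan is a straightforward induction on the codimension $k$, using Proposition~\ref{lowerbound} (the $k=1$ case) as the base and Theorem~\ref{j}(d) to pass to lower-dimensional faces. The base case $k=1$ is exactly Proposition~\ref{lowerbound}, which gives both the inequality and the equality characterization.

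For the inductive step, assume the statement holds for codimension $k-1$ faces. Write the codimension-$k$ face as $f = F_1 \cap F_2 \cap \cdots \cap F_k$ and set $g = F_2 \cap \cdots \cap F_k$, which is a codimension-$(k-1)$ face of $P$. Then $f = F_1 \cap g$ is a facet of $g$ (viewed as a polytope in its own right of dimension $n-k+1$). By Theorem~\ref{j}(d), $g$ is itself an even polytope, so Proposition~\ref{lowerbound} applies to the pair $(g,f)$ and gives $|V(g)| \geq 2|V(f)|$. Combining with the inductive hypothesis $|V(P)| \geq 2^{k-1}|V(g)|$ yields
\[
  |V(P)| \;\geq\; 2^{k-1}|V(g)| \;\geq\; 2^{k-1}\cdot 2|V(f)| \;=\; 2^k|V(f)|.
\]

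For the equality case, if $|V(P)| = 2^k|V(f)|$ then both inequalities above must be equalities: $|V(P)| = 2^{k-1}|V(g)|$ and $|V(g)| = 2|V(f)|$. The inductive hypothesis forces $P = g\times [0,1]^{k-1}$, and Proposition~\ref{lowerbound} forces $g = f\times[0,1]$, whence $P = f\times[0,1]\times[0,1]^{k-1} = f\times [0,1]^k$. The converse direction $P = f\times[0,1]^k \Rightarrow |V(P)| = 2^k|V(f)|$ is immediate.

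The only mild subtlety is verifying that the inductive chain stays inside the class of even polytopes, but this is guaranteed by Theorem~\ref{j}(d): any proper face of an $n$-colorable simple $n$-polytope is itself colorable in its own dimension. With that in hand, there is no real obstacle; the argument is essentially bookkeeping built on top of the codimension-one result.
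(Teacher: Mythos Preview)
Your proof is correct and is precisely the natural induction on codimension that the paper leaves implicit (the corollary is stated without proof, as an immediate consequence of Proposition~\ref{lowerbound}). The use of Theorem~\ref{j}(d) to ensure each intermediate face $g$ remains an even polytope is exactly the right ingredient, and your handling of the equality case is clean.
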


\vskip .1cm

\begin{cor}\label{min}
 For any $n$-dimensional even polytope $P$, we must have
$|V(P)|\geq 2^n$.
 In particular, $|V(P)|=2^n$ if and only if $P=[0,1]^n$ (the $n$-dimensional cube).
\end{cor}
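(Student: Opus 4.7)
The plan is to obtain Corollary~\ref{min} as an immediate specialization of Corollary~\ref{Cor:bound} to the case where $f$ has codimension $n$. Since $P$ is an $n$-dimensional simple polytope, every vertex $v$ of $P$ is a codimension-$n$ face with $|V(v)|=1$. Applying Corollary~\ref{Cor:bound} with $f=v$ and $k=n$ yields
\[
|V(P)| \;\geq\; 2^{n}\,|V(v)| \;=\; 2^{n},
\]
which is precisely the desired inequality.

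For the characterization of equality, the second assertion of Corollary~\ref{Cor:bound} says that $|V(P)|=2^{n}|V(v)|$ holds if and only if $P=v\times[0,1]^{n}$; since $v$ is a point, this product is combinatorially the $n$-cube $[0,1]^{n}$. The converse, namely that $[0,1]^{n}$ is an even polytope with exactly $2^{n}$ vertices, is clear: the $n$ pairs of opposite facets of the cube furnish an obvious coloring by $n$ colors, so $[0,1]^{n}$ is indeed $n$-colorable, and its vertex set has cardinality $2^{n}$.

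Since the argument is a one-step specialization of an already established corollary, there is no real obstacle; the only point worth double-checking is that Corollary~\ref{Cor:bound} does apply down to codimension $n$. This is guaranteed by Theorem~\ref{j}(d): every proper face of an even polytope is itself an even polytope of its own dimension, so Proposition~\ref{lowerbound} can be iterated $n$ times, producing a chain $P\supset F^{n-1}\supset F^{n-2}\supset\cdots\supset F^{0}=v$ along which the vertex count is at least halved at every step, which is exactly the content needed for the $2^{n}$ bound and its equality case.
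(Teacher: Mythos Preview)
Your proof is correct and follows exactly the approach implied by the paper: Corollary~\ref{min} is stated immediately after Corollary~\ref{Cor:bound} without a separate proof, so the intended argument is precisely the specialization $k=n$, $f=v$ a vertex, that you carry out. Your final paragraph re-deriving why Corollary~\ref{Cor:bound} holds down to codimension $n$ is unnecessary, since that corollary is already stated for arbitrary $k$, but it does no harm.
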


\vskip .1cm

\begin{cor}\label{Cor:3-face}
 Suppose $P$ is an $n$-dimensional even polytope, $n\geq 4$.
 If there exists a facet
 $F$ of $P$ with $|V(P)| = 2|V(F)|$, then there exists a $3$-face of $P$
 which is isomorphic to a $3$-dimensional cube.
\end{cor}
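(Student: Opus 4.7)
The plan is to first identify the combinatorial type of $P$ from the equality hypothesis, and then reduce the existence of a cubical $3$-face to the classical fact that every $3$-dimensional even polytope must contain a quadrilateral.

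By Proposition~\ref{lowerbound}, the assumption $|V(P)|=2|V(F)|$ forces $P=F\times[0,1]$ as combinatorial polytopes. Under this decomposition the $3$-faces of $P$ are of two types: either $F'\times\{0\}$ or $F'\times\{1\}$ for some $3$-face $F'$ of $F$, or else $g\times[0,1]$ for some $2$-face $g$ of $F$. A face of the second type is isomorphic to the $3$-cube $[0,1]^3$ precisely when $g$ is a quadrilateral. So it suffices to produce a $4$-gon $2$-face inside $F$.

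Next, I will pass to a $3$-dimensional even polytope sitting inside $F$. By Theorem~\ref{j}(d), the facet $F$ is $(n-1)$-colorable, and $n-1\geq 3$. If $n-1=3$, take $Q=F$; if $n-1>3$, choose any $3$-face $Q$ of $F$, which is again $3$-colorable by Theorem~\ref{j}(d). In either case $Q$ is an even simple $3$-polytope, and any $2$-face of $Q$ is a $2$-face of $F$ as well.

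The final step is to find a quadrilateral $2$-face of $Q$. This is where I will invoke the standard consequence of Euler's formula for simple $3$-polytopes: if $Q$ has $F_Q$ facets and $E_Q$ edges then $E_Q=3F_Q-6$, so the average number of sides per $2$-face is $2E_Q/F_Q=6-12/F_Q<6$. Hence some $2$-face of $Q$ has at most $5$ sides. Because $Q$ is $3$-colorable, Theorem~\ref{j}(b) forces every $2$-face to have an even number of vertices, so this small face must have exactly $4$ vertices. Taking $g$ to be this quadrilateral, $g\times[0,1]$ is the desired cubical $3$-face of $P$. There is no real obstacle here; the only thing to be careful about is the reduction from a general $(n-1)$-dimensional even $F$ to a $3$-dimensional even subface, which is handled by Joswig's inheritance property.
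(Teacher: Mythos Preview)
Your argument is correct and follows essentially the same route as the paper: both use $|V(P)|=2|V(F)|$ to identify $P\cong F\times[0,1]$, locate a $4$-gon $2$-face $g$ inside $F$ via the standard fact that a simple $3$-polytope has a $2$-face with fewer than $6$ sides, and conclude that $g\times[0,1]$ is a $3$-cube face. You are simply more explicit than the paper in two places---the reduction from the $(n-1)$-dimensional $F$ to a $3$-dimensional even subface $Q$, and the Euler-formula justification of the ``well known'' fact---which is fine.
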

\begin{proof}
  It is well known that any $3$-dimensional simple polytope must have a $2$-face $f$ with
  less than $6$ vertices. Now since $P$ is even, any $2$-face of $P$ must have an even number of
  vertices. So there exists a $4$-gon face $f$ in $F$. Then since $|V(P)| = 2|V(F)|$, we have
  $P=F\times [0,1]$ by Corollary~\ref{Cor:bound}.
  So $P$ has a $3$-face $f\times [0,1]$ which is a $3$-cube.
\end{proof}
 \vskip .6cm

 \section{Doubly-even binary codes}

A binary code $C$ is called \emph{doubly-even}
  if the Hamming weight of any codeword
 in $C$ is divisible by $4$. Doubly-even self-dual codes are of special importance among binary
 codes and have been extensively studied. According to Gleason~\cite{gl}, the length of any doubly-even self-dual code is divisible by $8$. In addition,
 Mallows-Sloane~\cite{ms} showed that if $C$ is a double-even self-dual code
 of length $l$, it is necessary that the minimum distance $d$ of $C$ satisfies
   $ d \leq
      4 \big[ \frac{l}{24}\big] +4
      $.
And $C$ is called \emph{extremal} if the equality holds.
 \vskip .1cm

 A result of Zhang~\cite{zhang} tells us that
 an extremal doubly-even self-dual binary code must have length less or equal to $3928$.
However, the existence of extremal doubly-even self-dual binary codes is only known for the following lengths (see~\cite{h} and~\cite[p.273]{rs})
 $$l = 8, 16, 24, 32, 40, 48, 56, 64, 80, 88, 104, 112, 136.$$
 For example, the extended Golay code $\mathcal{G}_{24}$
 is the only doubly-even self-dual [24,12,8] code, and
 the extended quadratic residue code $QR_{48}$ is the only doubly-even self-dual
  [48,24,12] code (see~\cite{hltp}). In addition,
  the existence of an extremal doubly-even self-dual
code of length $72$ is a long-standing open question
 (see~\cite{s} and~\cite[Section 12]{rs}). \vskip .1cm

The following proposition is an immediate consequence of Corollary~\ref{Cor:Main-2}, which gives
us a way to construct doubly-even self-dual codes from simple polytopes.

\begin{prop} \label{Prop:3-face}
     For an $(2k+1)$-dimensional even polytope $P$, the self-dual binary code
     $\mathfrak{B}_{k}(P)$ is doubly-even if and only if
     the number of vertices of any $(k+1)$-dimensional face of $P$
     is divisible by $4$.
\end{prop}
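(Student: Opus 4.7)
The plan is to exploit the well-known fact (stated at the start of the paper) that
$$\langle u,v\rangle \equiv \tfrac{1}{2}\bigl(wt(u)+wt(v)-wt(u+v)\bigr)\pmod 2$$
for any $u,v\in\mathbb{F}_2^{|V(P)|}$. Since $P^{2k+1}$ is an even polytope and $n=2k+1$ is odd, Corollary~\ref{col-self} tells us that $\mathfrak{B}_k(P)$ is self-dual. Hence $\langle u,v\rangle=0$ for every $u,v\in\mathfrak{B}_k(P)$, so the displayed identity upgrades to the congruence
$$wt(u+v)\equiv wt(u)+wt(v)\pmod 4,\qquad u,v\in\mathfrak{B}_k(P).$$
In other words, the function $\overline{wt}\colon\mathfrak{B}_k(P)\to\mathbb{Z}/4$ sending a codeword to its Hamming weight mod $4$ is a group homomorphism from the additive group of $\mathfrak{B}_k(P)$ to $\mathbb{Z}/4$.

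Next I would recall from Corollary~\ref{Cor:Main-2} (or equivalently Theorem~\ref{Thm:Main-1} combined with Claim-1 in its proof, applied with $n=2k+1$) that the self-dual code $\mathfrak{B}_k(P)$ is linearly spanned over $\mathbb{F}_2$ by the set
$$\{\,\xi_f\,:\,f\text{ is a }(k+1)\text{-dimensional face of }P\,\}.$$
Note that by definition $wt(\xi_f)=|V(f)|$.

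Since $\overline{wt}$ is a homomorphism, the code $\mathfrak{B}_k(P)$ is doubly-even (i.e.\ $\overline{wt}\equiv 0$) if and only if $\overline{wt}$ vanishes on this spanning set; equivalently, $|V(f)|\equiv 0\pmod 4$ for every $(k+1)$-dimensional face $f$ of $P$. This proves both directions simultaneously: the forward direction is immediate since each $\xi_f$ is itself a codeword, and the backward direction follows by decomposing an arbitrary codeword as a sum of such generators and invoking the homomorphism property.

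There is really no serious obstacle in this argument; the only point that needs care is verifying that one may freely apply the weight-mod-$4$ additivity to arbitrary sums of generators (rather than just two at a time), but this is an immediate induction using $wt\bigl((u_1+\cdots+u_{s-1})+u_s\bigr)\equiv wt(u_1+\cdots+u_{s-1})+wt(u_s)\pmod 4$, which is valid at every stage because $\mathfrak{B}_k(P)$ is closed under addition and self-dual.
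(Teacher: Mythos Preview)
Your proof is correct and matches the paper's intended argument: the paper simply states that the proposition is ``an immediate consequence of Corollary~\ref{Cor:Main-2}'' without further detail, and what you have written is precisely the standard unpacking of that remark---use the spanning set of $(k+1)$-face vectors from Corollary~\ref{Cor:Main-2}, together with the mod-$4$ additivity of Hamming weight that self-duality guarantees.
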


\begin{defn}
 We say that a self-dual binary code $C$ can be \emph{realized by an even polytope} if
 there exists a $(2k+1)$-dimensional
even polytope $P$ so that $C = \mathfrak{B}_{k}(P)$.
\end{defn}
\vskip .1cm

\begin{exam}
 An extremal doubly-even self-dual binary code of length $8$ and $16$ can be realized by
 the $3$-cube and the $8$-prism ($8$-gon$\times [0,1]$), respectively.
 In addition, the $(2k+1)$-dimensional cube realizes a special doubly-even
 \emph{Reed-Muller code}.
\end{exam}

\vskip .1cm

\begin{prop}\label{golay}
The $[24, 12,8]$ extended Golay code $\mathcal{G}_{24}$ cannot be realized by any even polytope.
\end{prop}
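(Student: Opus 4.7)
The plan is to pin down the dimension of a hypothetical even polytope realizing $\mathcal{G}_{24}$ and then compare minimum distances. Suppose for contradiction that there is an even polytope $P$ of dimension $n = 2k+1$ with $\mathfrak{B}_{k}(P) = \mathcal{G}_{24}$. Since the Golay code has length $24$, we must have $|V(P)| = 24$.

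Next I would apply Corollary~\ref{min}, which tells us that any $n$-dimensional even polytope has at least $2^n$ vertices. Therefore $2^{n} \leq 24$, forcing $n \leq 4$. Since $n = 2k+1$ is odd, this leaves only $n = 3$ and hence $k = 1$. So $P$ must be a $3$-dimensional $3$-colorable simple polytope with $24$ vertices, and the realization is of the form $\mathfrak{B}_1(P^3) = \mathcal{G}_{24}$.

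The finishing blow is Proposition~\ref{prop:3-polytope}: for any $3$-dimensional $3$-colorable simple polytope the minimum distance of the self-dual code $\mathfrak{B}_1(P^3)$ is exactly $4$. On the other hand, $\mathcal{G}_{24}$ is the extremal doubly-even self-dual $[24,12,8]$ code with minimum distance $8$. These two facts are incompatible, which gives the desired contradiction.

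There is no genuine obstacle here — every input is already proved earlier in the paper — so the argument is essentially a short bookkeeping exercise once one spots that $24 < 2^5$. The only thing one needs to be careful about is confirming that the parity constraint $n$ odd (coming from Proposition~\ref{suff-nece} and Lemma~\ref{ineq}) really applies in our setting: if $\mathfrak{B}_k(P)$ is self-dual on a polytope $P$ of dimension $n \geq 3$, then by Lemma~\ref{ineq} one has $0 < 2k < n$, and combined with the corollary $|V(P)| \geq 2^n$ this already rules out $n \geq 5$ outright, leaving just $n=3$, $k=1$ to handle via Proposition~\ref{prop:3-polytope}.
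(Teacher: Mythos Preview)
Your proof is correct and follows essentially the same route as the paper: bound the dimension via $|V(P)|\geq 2^n$, reduce to $n=3$, and finish with Proposition~\ref{prop:3-polytope}. The only small wrinkle is that ``$n\leq 4$ and $n$ odd'' also allows $n=1$; the paper dismisses this case explicitly (a $1$-polytope has two vertices, not $24$), and you should too, since Lemma~\ref{ineq} is stated only for $n\geq 3$ and so does not by itself exclude $n=1$.
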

\begin{proof}
Assume $\mathcal{G}_{24}$ can be realized by an $n$-dimensional even polytope
 $P^n$, where $n$ is odd. Then $P^n$ has 24 vertices.
By Corollary~\ref{min}, we have $24\geq 2^n$ which implies $n=1,3$.
But $n=1$ is clearly impossible. And by Proposition~\ref{prop:3-polytope},
$n=3$ is impossible either. \vskip .1cm

Another way to prove this result is that since $P^3$ must have a $4$-gon face,
the code $\mathfrak{B}_1(P^3)$ must have a codeword with Hamming
weight $4$. But it is known that
any codeword of $\mathcal{G}_{24}$ must have Hamming weight
$0, 8, 12, 16$ or $24$.
\end{proof}

\vskip .1cm

\begin{prop}\label{quadratic}
The $[48, 24,12]$ extended quadratic residue code
 $QR_{48}$ cannot be realized by any even polytope.
\end{prop}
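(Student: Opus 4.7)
The plan is to follow the template of Proposition~\ref{golay}. Suppose for contradiction that $QR_{48}=\mathfrak{B}_k(P^n)$ for some odd $n$ and some $n$-dimensional even polytope $P^n$. The code length forces $|V(P^n)|=48$, and Corollary~\ref{min} then gives $2^n\leq 48$, so $n\in\{1,3,5\}$. The case $n=1$ is absurd, and the case $n=3$ is excluded exactly as in Proposition~\ref{golay}: Proposition~\ref{prop:3-polytope} says that $\mathfrak{B}_1(P^3)$ has minimum distance $4$, whereas $QR_{48}$ has minimum distance $12$.

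The heart of the argument is the case $n=5$, where necessarily $k=2$. Here I would combine three ingredients. First, since $QR_{48}$ has minimum distance $12$, Corollary~\ref{Cor:Main-2} forces every $3$-face $f$ of $P^5$ to satisfy $|V(f)|\geq 12$. Second, Proposition~\ref{lowerbound} applied to $P^5$ and any facet $F$ gives $|V(F)|\leq 24$. Third, I would apply Corollary~\ref{Cor:3-face} at the level of each facet $F$ (viewed as a $4$-dimensional even polytope), not just at the level of $P^5$, in order to sharpen a weak inequality to a strict one.

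Concretely, fix an arbitrary facet $F$ of $P^5$ and an arbitrary $3$-face $F'$ of $F$ (which exists because $\dim F=4$). Proposition~\ref{lowerbound} applied to the pair $(F,F')$ gives $|V(F)|\geq 2|V(F')|$. If equality held, then $F=F'\times[0,1]$, and Corollary~\ref{Cor:3-face} applied to $F$ would produce a $3$-face of $F$ isomorphic to the $3$-cube; such a $3$-cube is then a $3$-face of $P^5$ with only $8$ vertices, contradicting $|V(f)|\geq 12$. Hence the inequality is strict, and combined with $|V(F)|\leq 24$ one obtains $|V(F')|<|V(F)|/2\leq 12$, i.e.\ $|V(F')|\leq 11$. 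This directly contradicts the lower bound $|V(F')|\geq 12$.

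The only delicate point is observing that Proposition~\ref{lowerbound} on its own gives only $|V(F')|\leq 12$, which is not yet a contradiction. The required strict improvement $|V(F')|<12$ must come from ruling out the equality case $F=F'\times[0,1]$, and this is precisely where Corollary~\ref{Cor:3-face} (which always produces a $3$-cube sub-face inside a product $F'\times[0,1]$) has to be invoked at the facet level rather than only at the level of $P^5$.
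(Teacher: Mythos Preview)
Your argument is correct and reaches the same contradiction as the paper (a $3$-cube face of $P^5$ violating the bound $|V(f)|\geq 12$), but the route is slightly more circuitous. The paper applies Corollary~\ref{Cor:bound} directly in codimension~$2$: for any $3$-face $f$ one has $48=|V(P^5)|\geq 4|V(f)|$, so $|V(f)|\leq 12$; combined with the minimum-distance bound $|V(f)|\geq 12$ this forces $|V(f)|=12$ for \emph{every} $3$-face, and the equality clause of Corollary~\ref{Cor:bound} then gives $P^5\cong f\times[0,1]^2$, which visibly contains a $3$-cube as a $3$-face. You instead chain two applications of the codimension-$1$ bound (Proposition~\ref{lowerbound}), first at the level of $P^5$ and then at the level of a facet $F$, and have to rule out the equality case separately via Corollary~\ref{Cor:3-face}. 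The paper's version is shorter because it pins down $|V(f)|$ exactly in one step and obtains the product structure (and hence the cube) immediately; your version avoids the codimension-$2$ corollary but pays for it with an extra case analysis.
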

\begin{proof}
Suppose $QR_{48}$ can be realized by an $n$-dimensional even polytope
 $P^n$. Then by Corollary~\ref{min}, we must have $n=1,3$ or $5$.
 But by Proposition~\ref{prop:3-polytope}, $n$ cannot be $1$ or $3$.
 If $n=5$, since $|V(P^5)|=48$, any $3$-face of $P^5$ has to be an even polytope with $12$
 vertices by Corollary~\ref{Cor:bound} and the fact that the minimum distance of $QR_{48}$ is $12$. Then $P^5$ is isomorphic to the product of a simple
 $3$-polytope with $[0,1]^2$ by Corollary~\ref{Cor:bound} again. This implies that $P^5$ has a
  $3$-face isomorphic to a $3$-cube. But this contradicts the fact that any $3$-face of $P^5$ has
  $12$ vertices.
\end{proof}

 \vskip .1cm

\begin{prop}
 An extremal doubly-even self-dual codes of length $72$ (if exists) cannot be
 realized by any even polytope.
\end{prop}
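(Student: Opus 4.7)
The plan is to follow the template of Propositions~\ref{golay} and~\ref{quadratic}. Suppose such a code $C$ is realized by an $n$-dimensional even polytope $P^n$; then $n$ is odd, $|V(P^n)|=72$, and $C=\mathfrak{B}_{(n-1)/2}(P^n)$. The Mallows--Sloane bound gives minimum distance $4\lfloor 72/24\rfloor+4=16$. By Corollary~\ref{min}, $n\in\{1,3,5\}$; the case $n=1$ is trivial, and $n=3$ is excluded by Proposition~\ref{prop:3-polytope} (which yields minimum distance $4$, not $16$). The entire work is to rule out $n=5$.

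For $n=5$, Corollary~\ref{Cor:Main-2} gives $|V(f)|\geq 16$ for every $3$-face $f$ of $P^5$, Corollary~\ref{Cor:bound} gives $|V(f)|\leq 72/4 = 18$, and the doubly-even hypothesis forces $4\mid|V(f)|$; hence $|V(f)|=16$ for every $3$-face. Unlike in the $QR_{48}$ proof of Proposition~\ref{quadratic}, here $|V(P^5)|=72\neq 4\cdot 16$, so one cannot directly invoke the equality case of Corollary~\ref{Cor:bound} at a $3$-face. Instead, I would climb one step up in the face lattice and study an arbitrary facet $F$: by Proposition~\ref{lowerbound} applied to $f\subset F\subset P^5$, we have $32\leq|V(F)|\leq 36$, and since $\xi_F\in\mathfrak{B}_1(P^5)\subset\mathfrak{B}_2(P^5)$ by Lemma~\ref{Lem:Inclusion}, the doubly-even hypothesis forces $|V(F)|\in\{32,36\}$.

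In each of the two cases, equality in Proposition~\ref{lowerbound} yields a product decomposition, and the resulting combinatorial constraints collide with Euler's formula. If $|V(F)|=32=2|V(f)|$, then $F=f\times[0,1]$; the remaining $3$-faces of $F$, of the form $g\times[0,1]$ with $g$ a $2$-face of $f$, must also have $16$ vertices, forcing every $2$-face of $f$ to be an octagon, but Euler's formula for the $3$-polytope $f$ then reads $16-24+6=-2\neq 2$. If $|V(F)|=36=|V(P^5)|/2$, then $P^5=F\times[0,1]$, and analyzing the two types of $3$-faces of $F\times[0,1]$ shows that $F$ is a $4$-dimensional even polytope on $36$ vertices whose $2$-faces are all octagons and whose $3$-faces all have $16$ vertices; a standard vertex-incidence count yields $E=72$, $f_2=27$, $f_3=9$, and Euler's formula for a $4$-polytope then gives $36-72+27-9=-18\neq 0$. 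The main obstacle is recognizing that the product decomposition must be sought at the facet level rather than at the $3$-face level, and exploiting the doubly-even hypothesis a second time (via $\mathfrak{B}_1\subset\mathfrak{B}_2$) to pin $|V(F)|$ down to just two values that expose this product structure.
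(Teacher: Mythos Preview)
Your proof is correct and follows the paper's overall architecture: reduce to $n=5$, pin down every $3$-face to $16$ vertices, then analyze an arbitrary facet $F$. The two differences are worth noting. First, you explicitly justify $|V(F)|\in\{32,36\}$ via $\xi_F\in\mathfrak{B}_1(P^5)\subset\mathfrak{B}_2(P^5)$ and the doubly-even hypothesis; the paper's proof cites only Corollary~\ref{Cor:bound} here, which by itself yields $32\le|V(F)|\le 36$ and leaves $|V(F)|=34$ unaddressed, so your extra sentence actually patches a small gap in the written argument. Second, in each of the two cases the paper simply invokes Corollary~\ref{Cor:3-face} to produce a $3$-cube face (with $8$ vertices) inside $P^5$, immediately contradicting the fact that every $3$-face has $16$ vertices; you instead push the product structure down to force all $2$-faces of the factor polytope to be octagons and then kill this with Euler's formula. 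Both routes work: the paper's is a one-line appeal to an already-proved lemma, while yours is self-contained and avoids Corollary~\ref{Cor:3-face} entirely at the cost of a short face-count.
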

 \begin{proof}
  Assume that $C$ is an extremal doubly-even self-dual
   binary code of length $72$ and $C$
  can be realized by an even polytope $P$. Then by the definition of extremity,
  the minimum distance of $C$ is $16$ and $P$ has $72$ vertices. Moreover, we have
 \begin{itemize}
  \item[(i)] the dimension of $P$ has to be $5$ by Corollary~\ref{min} and
     Proposition~\ref{prop:3-polytope}; \vskip .1cm

  \item[(ii)] any $3$-face of $P$ must be an even polytope with $16$ vertices by
Corollary~\ref{Cor:bound} and Proposition~\ref{Prop:3-face}.
\end{itemize}
 Then any $4$-face of $P$ must have $32$ or $36$ vertices by Corollary~\ref{Cor:bound}.\vskip .1cm

$\bullet$ If $P$ has a $4$-face $F$ with $32$ vertices, then $F=f\times [0,1]$ where $f$ is a $3$-face
 with $16$ vertices by (ii) and Corollary~\ref{Cor:bound}. This implies that
 $P$ has a $3$-face isomorphic to a $3$-cube by Corollary~\ref{Cor:3-face}. But this
  contradicts (ii).\vskip .1cm

$\bullet$ If $P$ has a $4$-face $F$ with $36$ vertices, then
$P = F\times [0,1]$ by Corollary~\ref{Cor:bound}. So
 $P$ has a $3$-face isomorphic to
  a $3$-cube by Corollary~\ref{Cor:3-face}. This contradicts (ii) again. \vskip .1cm

 So by the above argument, such an even polytope $P$ does not exist.
\end{proof}

\vskip .6cm

\section*{Acknowledgements}
This paper is inspired by a question on
self-dual binary codes and small covers, which was proposed to the authors
by professor Matthias Kreck during his visit to China in September 2012.\vskip .6cm

\end{document}